\numberwithin{equation}{section}
\theoremstyle{plain}
\newtheorem{thm}{Theorem}[section]
\newtheorem{prop}[thm]{Proposition}
\newtheorem{lem}[thm]{Lemma}
\newtheorem*{thm*}{Theorem}
\newtheorem*{conj*}{Conjecture}
\theoremstyle{definition}
\newtheorem{defn}[thm]{Definition}
\theoremstyle{remark}
\newtheorem{rk}[thm]{Remark}
\crefname{thm}{Theorem}{Theorems} 
\crefname{thmrough}{Theorem}{Theorems} 
\crefname{lem}{Lemma}{Lemmas} 
\crefname{conj}{Conjecture}{Conjectures}
\crefname{prop}{Proposition}{Propositions}
\renewcommand{\Bbb}{\mathbb}
\newcommand{\ve}{\varepsilon}
\newcommand{\les}{\lesssim}
\renewcommand{\k}{\mathbf{k}}
\renewcommand{\paragraph}{%
  \@startsection{paragraph}{4}%
  {\z@}{1.25ex \@plus 1ex \@minus .2ex}{-1em}%
  {\normalfont\normalsize\bfseries}%
}
\begin{document}

\title{Semilinear wave equations on extremal Reissner--Nordstr\"om \\ black holes revisited}

\author[1]{Yannis Angelopoulos\thanks{yannis@caltech.edu}}
\author[2,3]{Ryan Unger\thanks{runger@berkeley.edu}}
\affil[1]{\small California Institute of Technology, Division of Physics, Mathematics, and Astronomy,

1200 East California Boulevard, Pasadena, CA 91125, United States of America \vskip.1pc \ 
}

\affil[2]{\small  Stanford University, Department of Mathematics, 
	
450 Jane Stanford Way, Building 380, Stanford, CA 94305, United States of America
 \vskip.1pc \  
	}
 \affil[3]{\small  University of California, Berkeley,  Department of Mathematics,
	
970 Evans Hall, Berkeley, CA 94720, United States of America \vskip.1pc \  
	}

\date{August 6, 2025}

\maketitle

\begin{abstract}
We revisit global existence and decay for small-data solutions of semilinear wave equations on extremal Reissner--Nordstr\"om black hole backgrounds satisfying the classical null condition, a problem which was previously addressed by the first author in joint work with Aretakis and Gajic \cite{AAG20}. In this paper, we develop a new approach based on propagating a significantly weaker set of estimates, which allows for a simpler and more streamlined proof. Our proof does not require tracking sharp estimates for the solution in the near-horizon region, which means that it is compatible with, but does not imply, the non-decay and growth hierarchy of derivatives of the solution along the event horizon expected from the Aretakis instability. In particular, this approach is in principle compatible with other settings where stronger horizon instabilities are expected, such as nonlinear charged scalar fields on extremal Reissner--Nordstr\"om, or nonlinear waves on extremal Kerr. We also sketch how our proof applies to semilinear problems on spacetimes settling down to extremal Reissner--Nordstr\"om, such as those constructed in our joint work with Kehle \cite{AKU24}. 
\end{abstract}

\thispagestyle{empty}
%\newpage 
\tableofcontents
%\thispagestyle{empty}

%\newpage

\section{Introduction} 

\subsection{Background and the main result}

In this paper, we consider systems of semilinear wave equations of the form
\begin{equation}\label{eq:wave}
    \Box_{g_M} \phi = \mathcal N(x,\phi,d\phi)
\end{equation}
on $(3+1)$-dimensional extremal Reissner--Nordstr\"om black hole backgrounds $(\mathcal M^{3+1},g_M)$, where $M>0$ is the mass of the black hole. Here $\phi:\mathcal M\to \Bbb R^N$ is a collection of scalar fields on which we impose no symmetry assumptions, $\Box_{g_M}$ is the covariant wave operator with respect to $g_M$, and the nonlinearity $\mathcal N(x,\phi,d\phi)$ satisfies a strong version of the null condition at the event horizon $\mathcal H^+$ and at null infinity $\mathcal I^+$. An explicit example of such a system is given by the celebrated \emph{wave map} system with domain $(\mathcal M,g_M)$ and target an arbitrary $N$-dimensional Riemannian manifold. The main motivation for studying \eqref{eq:wave} is to model nonlinear issues in the problem of nonlinear asymptotic stability of extremal Reissner--Nordstr\"om black holes as solutions to the Einstein--Maxwell equations without symmetry assumptions. We refer the reader to \cite{DHRT,Daf24,KU24,AKU24} for a formulation of the stability problem for extremal black holes.

The main difficulty in studying linear and nonlinear waves on extremal Reissner--Nordstr\"om is the absence of the stabilizing mechanism of the horizon redshift effect which is fundamental to our understanding of linear waves on subextremal black holes, such as the celebrated Schwarzschild solution \cite{dafermos2009red,dafermos2013lectures}. In a remarkable series of papers \cite{Aretakis-instability-1,Aretakis-instability-2,Aretakis-instability-3}, Aretakis showed that
ingoing null derivatives of solutions to the linear wave equation on extremal Reissner–Nordström generically do not decay on the event horizon, and higher derivatives may even grow polynomially in time. This horizon instability, which has come to be known as the \emph{Aretakis instability}, has significant ramifications for nonlinear problems. In a short and elegant work \cite{aretakis2013nonlinear}, Aretakis considered the semilinear equations
\begin{equation}
    \Box_{g_M}\phi = \chi\big(\phi^{2n} + (Y\phi)^{2n}\big),\label{eq:Aretakis}
\end{equation}
where $n\in\Bbb N$, $Y$ denotes the translation-invariant null derivative transverse to $\mathcal H^+$, which is equal to $\partial_r$ in ingoing Eddington--Finkelstein $(v,r)$ coordinates, and $\chi$ is a cutoff function supported near the horizon at $r=M$. For any $n$, these equations enjoy a standard small-data global existence and stability theory on subextremal backgrounds. However, Aretakis showed that for any $n$, generic solutions to \eqref{eq:Aretakis} blow up \emph{in finite affine time} along the event horizon in extremal Reissner--Nordstr\"om. 

The finite-time blowup mechanism for \eqref{eq:Aretakis} is of ODE type and is mainly caused by the interaction of the $(Y\phi)^{2n}$ term in the nonlinearity with the conservation law structure that causes the Aretakis instability in the linear case. This effect is reminiscent of the well-known Riccati-type blowup for the equation $\Box \phi = (\partial_t\phi)^2$ in Minkowski space \cite{john1979blow}. This example motivated the classical null condition \cite{klainerman1986null,Christodoulou-null-condition}, a structural assumption on quadratic nonlinearities which excludes the $ (\partial_t\phi)^2$ nonlinearity and allows one to prove global existence. In the subextremal (or Minkowski) setting, it is in fact only necessary to assume the null condition near null infinity in order to prove global existence. Aretakis' example \eqref{eq:Aretakis} indicates that some null structure condition for quadratic derivatives is also necessary \emph{at the event horizon} in the extremal case. Note moreover that finite-time blowup for \eqref{eq:Aretakis} occurs even when $n\ge 2$, which indicates that the most general form of the null condition in the extremal setting must also constrain the structure of higher order nonlinearities at the horizon. This is in sharp contrast to the situation at null infinity, where cubic (or higher) terms in $\phi$ or $d\phi$ are harmless if the quadratic terms satisfy the null condition. 

Under the assumption of spherically symmetric initial data, the first named author of the present paper proved global stability of the zero solution for scalar equations with nonlinearity $ f(x,\phi)g_{M}^{-1}(d\phi,d\phi)$ (a generalization of the standard $Q_0$ metric null form) in \cite{A16}. The Aretakis instability survives in this nonlinear setting: it is shown that $Y\phi$ generically does not decay along $\mathcal H^+$ (it remains approximately constant) and $Y^2\phi$ generically grows along $\mathcal H^+$. Nevertheless, appropriate energies of $\phi$ decay (up to a slight nonlinear loss) just as for the linear wave equation, which is why \cite{A16} is fundamentally a \emph{stability} result. In later joint work with Aretakis and Gajic \cite{AAG20}, the assumption on spherical symmetry was removed, at the expense of a significant increase in technical complexity: 

\begin{thm}[\cite{AAG20}]\label{thm:AAG20}
        Consider a scalar semilinear wave equation of the form \eqref{eq:wave} on an extremal Reissner--Nordstr\"om black hole spacetime $(\mathcal M^{3+1},g_M)$, with nonlinearity $\mathcal N(x,\phi,d\phi)= f(x,\phi)g_{M}^{-1}(d\phi,d\phi)$.  Let $\mathring\phi$ be smooth characteristic initial data for \eqref{eq:wave} posed along a bifurcate null hypersurface that spans $\mathcal H^+$ and $\mathcal I^+$. If $\mathring\phi$ is sufficiently small in a suitable weighted norm $\|\cdot\|_0$, then:
        \begin{enumerate}
            \item The solution $\phi$ to \eqref{eq:wave} with initial data $\mathring\phi$ exists globally in the domain of outer communication, up to and including the event horizon $\mathcal H^+$.  Suitable energies of $\phi$ (weaker than $\|\cdot\|_0$), as well as $\phi$ itself measured pointwise, decay polynomially in time. 
           
            \item The ingoing null derivative of the solution, $Y\phi$, is approximately constant along $\mathcal H^+$. For generic data $\mathring\phi$, the second ingoing null derivative of the solution, $Y^2\phi$, grows linearly in advanced time $v$ along $\mathcal H^+$.
        \end{enumerate}
\end{thm}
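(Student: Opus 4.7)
The plan is to proceed by a bootstrap argument on a high-order weighted energy norm, combined with the null structure of $\mathcal{N}$ at both $\mathcal{H}^+$ and $\mathcal{I}^+$. The three natural commutators on extremal Reissner--Nordstr\"om are the stationary Killing field $T$, the spherical Killing vectors, and the translation-invariant transverse null derivative $Y$ (which is not Killing but is adapted to the degeneracy of $T$ on $\mathcal{H}^+$). Because the spacetime lacks the redshift effect, $Y$-commuted energy estimates produce borderline bulk terms that degenerate at $r=M$; one would handle these via the Aretakis hierarchy of horizon-adapted weighted energies. At large $r$ one propagates the $r^p$-hierarchy of Dafermos--Rodnianski to extract decay, and closes via an integrated local energy decay estimate (degenerate both at the horizon and at the photon sphere) for the full commuted system.

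For the nonlinearity $f(x,\phi)g_M^{-1}(d\phi,d\phi)$, the key observation is that in a double null frame the quadratic form $g_M^{-1}(d\phi,d\phi)$ excludes the dangerous squared derivatives $(L\phi)^2$ at $\mathcal{I}^+$ and $(Y\phi)^2$ at $\mathcal{H}^+$; this is precisely the "null condition at the horizon and at infinity" that \eqref{eq:Aretakis} shows to be necessary. The commuted nonlinear error terms can therefore be written schematically as (tangential derivative)$\cdot$(transverse derivative) + (harmless cubic), and controlled by the bootstrap norms together with the decay just extracted. The bootstrap then closes, yielding global existence in the domain of outer communication up to and including $\mathcal{H}^+$, together with polynomial decay of suitable energies and of $\phi$ pointwise, as in the linear theory.

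For part (2), I would restrict \eqref{eq:wave} to $\mathcal{H}^+$, which in ingoing Eddington--Finkelstein coordinates yields a transport equation for $Y\phi$ along the horizon whose source is a first-order tangential differential operator applied to $\phi|_{\mathcal{H}^+}$, plus the nonlinear error. In the linear case this produces the classical Aretakis conservation law: a specific combination of $\phi$ and $Y\phi$ on each sphere of $\mathcal{H}^+$ (after projection onto the spherically symmetric mode) is conserved in $v$ and generically nonzero. The null structure ensures that the nonlinear error term on $\mathcal{H}^+$ is quadratically small and integrable in $v$ by the decay from part (1), so the conservation law persists asymptotically, forcing $Y\phi$ to remain approximately constant along $\mathcal{H}^+$ for generic data. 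Differentiating the equation once more in $Y$ and restricting to $\mathcal{H}^+$ produces a transport equation for $Y^2\phi$ whose source contains a term proportional to the nonzero Aretakis invariant, giving the asserted linear growth in $v$.

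The main obstacle, I expect, is closing the top-order $Y$-commuted estimates near the horizon without the redshift: $[\Box_{g_M},Y]$ produces borderline terms that must be absorbed by the Aretakis hierarchy while simultaneously one controls the nonlinear error, which after several $Y$-commutations involves products of the very quantities that are expected to \emph{grow} in part (2). A secondary, coupled difficulty is obtaining just enough integrated decay of $\phi$ and its tangential derivatives on $\mathcal{H}^+$ to justify the asymptotic conservation-law argument; this is the point at which the $r^p$-method, the $T$-commuted hierarchy, and the horizon analysis must be tightly interlocked, and where the present paper's new, weaker propagated estimates presumably offer a simplification.
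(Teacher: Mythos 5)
The paper does not reprove this theorem; it is cited from \cite{AAG20}, and the paper's own main result (\cref{thm:rough}, proven as \cref{thm:main}) is a new proof of part~1 alone under strictly weaker hypotheses. Your sketch is a reasonable reconstruction of the \cite{AAG20} strategy as the paper describes it in the surrounding discussion: a coupled bootstrap for parts~1 and~2, horizon-adapted $(r-M)^{-p}$ and far-region $r^p$ hierarchies together with a degenerate integrated local energy decay estimate, the strong null condition at $\mathcal H^+$ and $\mathcal I^+$ to kill the dangerous squared transverse derivatives, and the Aretakis conservation law for part~2. You do omit one ingredient the paper flags explicitly: in \cite{AAG20} one commutes not only with $Y$ near the horizon but also with $r^2\partial_v$ near infinity in order to extend both $p$-hierarchies past $p=2$, which is what underpins the pointwise boundedness of $Y\phi$ on $\mathcal H^+$ needed for part~2. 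The substantive point the paper makes, however, is that this whole edifice --- the $Y$- and $r^2\partial_v$-commutations, the extended $p$-ranges, and the consequent heavy initial data norm $\|\cdot\|_0$ --- is only required for part~2. \cref{thm:rough} decouples the two parts and proves global existence by propagating only the $T$- and $\Gamma_i$-commuted $r^p$ and $(r-M)^{-p}$ hierarchies up to $p=2-\delta$, together with $L^1_\tau L^\infty$ and $L^2_\tau L^\infty$ estimates in the spirit of \cite{DHRT22}; this scheme does not, and cannot, establish boundedness of $Y\phi$ at the horizon (it only yields $|Y\phi|\les v^{1/2+\delta/2}$). You correctly anticipate, at the end of your proposal, that the interlocking of the horizon hierarchy with the growing $Y$-commuted quantities is the delicate point, and that the paper's weaker propagated estimates are where the simplification lives.
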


The point of view taken in \cite{A16,AAG20} is that parts 1.~and 2.~are essentially coupled and thus ought to be proved \emph{concurrently}, i.e., that one should design bootstrap assumptions that are strong enough to prove asymptotics of the solution along the event horizon (and hence point 2.), while being flexible enough to actually carry out the continuity argument proving point 1. Outside of symmetry, this demand greatly complicates the proof of stability: following \cite{angelopoulos2020late}, the proof of pointwise boundedness of $Y\phi$ at $\mathcal H^+$ requires commuting \eqref{eq:wave} with the vector field $Y$ near the horizon and with the vector field $r^2\partial_v$ in the far region, in order to extend the $(r-M)^{-p}$- and $r^p$-hierarchies (see already \cref{sec:linear-review}) past $p=2$. This commutation process generates many error terms in the energy estimates that require careful treatment and the initial data norm $\|\mathring\phi\|_0$ necessarily measures initial smallness of these commuted energies. 

\begin{rk}
    While \cref{thm:AAG20} was originally stated for scalar semilinear equations, the result immediately extends to the class of semilinear systems considered in the present paper. 
\end{rk}

Much more fundamentally, however, boundedness of $Y\phi$ at $\mathcal H^+$ is not actually compatible with the expected behavior of the linear wave equation in the case of a charged scalar field on extremal Reissner--Nordstr\"om \cite{Gaj25} or for non-axisymmetric solutions on extremal Kerr \cite{Gajic23} (see also \cite{cgz-exkerr}). With these more difficult problems in mind for the future, it is therefore desirable to obtain a proof of global existence for small-data solutions of \eqref{eq:wave} on extremal Reissner--Nordstr\"om that is compatible with the expected growth rate $|Y\phi|\gtrsim v^{1/2}$ along $\mathcal H^+$ in the charged scalar field and Kerr cases. 

We accomplish this in the present paper by propagating a much weaker hierarchy of estimates than in \cite{AAG20}. Since our scheme avoids commuting with $Y$ and $r^2\partial_v$, as a bonus this greatly simplifies the argument for global existence. 

\begin{thm}\label{thm:rough}
Global existence for semilinear systems on extremal Reissner--Nordstr\"om, i.e., part 1.~of \cref{thm:AAG20}, holds under a smallness assumption on $\mathring\phi$ with respect to a norm $\|\cdot\|_\star$ that only involves commutation with time-translation and rotation Killing fields and only controls $(r-M)^{-p}$- and $r^p$-energies up to $p=2-\delta$, where $\delta>0$ is a small auxiliary parameter, and hence is strictly weaker than $\|\cdot\|_0$.  
\end{thm}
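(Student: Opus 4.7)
The plan is to run a continuity argument on a hierarchy of weighted energies for $\phi$ commuted only with powers of the Killing fields $T$ (stationary) and $\Omega_i$ (rotations). Fix a small $\delta>0$ and take $\|\cdot\|_\star$ to measure, for $\phi$ and its Killing derivatives up to some sufficiently high order, the (degenerate) non-redshift energy together with the $r^p$-weighted Dafermos--Rodnianski flux near $\mathcal I^+$ and the $(r-M)^{-p}$-weighted Aretakis flux near $\mathcal H^+$, for $p\in\{0,1,2-\delta\}$. The bootstrap assumption is that $\|\phi\|_\star\lesssim \ve$ on the maximal forward development; from this we derive quantitative pointwise bounds on $\phi$ and its \emph{good} derivatives ($T\phi$, $\Omega_i\phi$, $\partial_v\phi$) via Sobolev embedding on the spheres together with weighted Hardy and transport inequalities, making no attempt to control $Y\phi$ pointwise on $\mathcal H^+$.

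For the linear problem, the ingredients are standard: the $r^p$- and $(r-M)^{-p}$-hierarchies are available for all $p$ \emph{strictly below $2$} without any commutation beyond $T$ and $\Omega_i$, since commuting with $Y$ or $r^2\partial_v$ in \cite{AAG20} is required precisely to push the hierarchies past the $p=2$ threshold and to capture the Aretakis non-decay of $Y\phi$ on $\mathcal H^+$. Combining the $p$-hierarchies with a Morawetz/integrated local energy decay estimate and a standard pigeonhole argument yields polynomial decay of the degenerate energy at a rate depending on $\delta$, and hence a pointwise estimate $|\phi|\lesssim \ve\langle v\rangle^{-\alpha}$ with $\alpha=\alpha(\delta)>0$ that can be made close to $\tfrac{1}{2}$ by taking $\delta$ small, together with matching decay for the good derivatives.

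The nonlinear closure relies on the null structure of $\mathcal N$ at both ends. Writing $g_M^{-1}(d\phi,d\phi)=2\partial_v\phi\,Y\phi+(1-M/r)^2(Y\phi)^2+(\text{angular derivative terms})$ in ingoing Eddington--Finkelstein coordinates, the only term pairing two potentially non-decaying derivatives near $\mathcal H^+$ is $(Y\phi)^2$, and its coefficient vanishes to second order at $r=M$; this $(r-M)^2$ weight, paired with the $(r-M)^{-(2-\delta)}$ weighted flux for $Y\phi$, leaves a net $(r-M)^{\delta}$ factor that is integrable, while pointwise decay of the good factor $\partial_v\phi$ controls the mixed term $2\partial_v\phi\,Y\phi$. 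A mirror analysis near $\mathcal I^+$, using the null condition at infinity and the $r^p$-hierarchy for $p=2-\delta$, handles the large-$r$ contributions. The main obstacle is the systematic book-keeping: at every commutation order, every nonlinear error must be paired either with a vanishing weight (the $(r-M)^2$ at $\mathcal H^+$ or appropriate decay at $\mathcal I^+$) or with pointwise decay of a good derivative, and one must verify that the decay rate $\alpha(\delta)$ is sufficient to render all such errors $v$-integrable. The simplification over \cite{AAG20} is that, since we never commute the equation with the non-Killing vector fields $Y$ or $r^2\partial_v$, no commutator errors of the form $[\Box_{g_M},Y]\phi$ or $[\Box_{g_M},r^2\partial_v]\phi$ appear, reducing the scheme to a relatively standard null-condition argument on a black hole background at the price of settling for $p=2-\delta$ rather than $p=2$.
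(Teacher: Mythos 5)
Your overall scaffolding---a bootstrap on $T$- and rotation-commuted weighted energies, the $r^p$- and $(r-M)^{-p}$-hierarchies truncated at $p=2-\delta$, and the observation that avoiding $Y$- and $r^2\partial_v$-commutations removes the commutator errors of \cite{AAG20}---matches the paper's strategy. However, the nonlinear closure step as written does not close, and the gap lies precisely where the paper's main novelty resides. First, your reliance on pointwise decay ``$|\phi|\lesssim\ve\langle v\rangle^{-\alpha}$ with $\alpha$ close to $\tfrac12$'' for the good factors cannot control the bulk errors at the top of the hierarchy: for $p=2-\delta$ the master energy $\mathcal X_{2-\delta,n-2}$ is merely bounded, so an integral of the form $\int\tau^{-\alpha}\underline{\mathcal E}_{2-\delta,n-2}(\tau)\,d\tau$ with $\alpha<1$ \emph{diverges}. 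The mechanism the paper actually uses is the $L^2_vL^\infty$ (and, via dyadic Cauchy--Schwarz, $L^1_vL^\infty$) estimates of \cref{prop:integrated-v-estimates,prop:L1-Linfty}, which are proved from the fundamental theorem of calculus transverse to the cones plus energy decay, not from pointwise decay of the solution. These integrated-in-time supremum estimates are what make the errors absorbable despite the lack of decay at top order, and they are absent from your scheme.

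Second, your decomposition of $Q_0$ in ingoing $(v,r)$ coordinates, $g_M^{-1}(d\phi,d\phi)\approx 2T\phi\,Y\phi+D(Y\phi)^2+\cdots$, destroys the cancellation that the double-null form $-D^{-1}\partial_u\phi\,\partial_v\phi$ makes manifest. In the paper's form the bad derivative $\partial_u$ is always paired with the good $\partial_v$ (leading to the error structures $\mathfrak N^k_{\mathrm{i}}$ and $\mathfrak N^k_{\mathrm{ii}}$ of \cref{prop:structure}), and no $\partial_u\phi\cdot\partial_u\phi$ pairing ever appears. Your heuristic ``the $(r-M)^2$ coefficient paired with the $(r-M)^{-(2-\delta)}$ flux leaves a net $(r-M)^\delta$'' is not a complete estimate: the $(Y\phi)^2$ term, once one factor is put in $L^\infty$ and the other in the flux, requires a $(r-M)$-weighted pointwise bound on $\partial_u\phi^{n-6}$ of the form $(r-M)^{-3/2+\delta}|\partial_u\phi^{n-6}|\lesssim1$, which must be propagated as a separate bootstrap quantity and recovered by the method of characteristics (\cref{prop:pointwise-u}). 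Similarly, ``pointwise decay of $\partial_v\phi$ controls $2\partial_v\phi\,Y\phi$'' fails because in $(v,r)$ coordinates $\partial_v\phi=2T\phi$ decays only like $\tau^{-1/2+\delta/2}$, which is again not $\tau$-integrable; only after re-expressing $T\phi$ in terms of the genuine good derivative $\partial_v^{(u,v)}\phi$ (and applying the integrated $L^1_vL^\infty$ estimate to it) does the error close. Finally, several structural features of the proof go unmentioned: the treatment of trapping via angular commutation and the separately-propagated integrated estimate $\mathcal Y_{n-3}$, the anomalous $T$-energy error that forces $p$ up to $1+\delta$ at top order, and the resulting mild $\tau^{4\delta}$ growth of the top-order $p=1+\delta$ energy.
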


For the precise assumptions we make on the nonlinearity $\mathcal N$, see already \cref{sec:str-intro,sec:nonlinearity}, and for the precise statement of the theorem, see already \cref{sec:statement}. The norm $\|\cdot\|_\star$ is defined in \cref{sec:loc}. For comments on semilinear equations \emph{not} covered by this theorem, see already \cref{sec:other-eqns}.

We stress that the importance of this statement is not that global existence holds for a larger class of data, but that the \emph{proof} of global existence can be carried out using much weaker energy and pointwise estimates than those needed to prove the Aretakis instability. In particular, given only a smallness assumption on $\|\cdot\|_\star$, the best estimate for $Y\phi$ along $\mathcal H^+$ one can hope to prove is $|Y\phi|\les  v^{1/2+\delta/2} \|\mathring\phi\|_\star$. Therefore, the proof of \cref{thm:rough} fulfills the requirement of being compatible (in principle) with the expected stronger instabilities in the charged scalar field and Kerr cases. Of course, once one has used \cref{thm:rough} to prove global existence, one can then revisit the solution and give a simpler proof of part 2.~of \cref{thm:AAG20} under a suitable smallness assumption on $\|\cdot\|_0$ by following the broad strokes of \cite{AAG20}.

This theorem may also be thought of as an instantiation of the principle (see for instance \cite{DL17}) that in certain problems exhibiting both stability and instability phenomena, the stability phenomenon can often be understood independently, i.e., stability can (and should!) be proved using estimates that are \emph{consistent with}, but \emph{strictly weaker than}, those needed to prove instability. 

\subsection{Overview of the proof} 

The proof of the main theorem is a standard bootstrap argument involving certain pointwise, energy, and spacetime integral assumptions. In this section, we outline the main ideas, starting with the relevant theory for the linear wave equation on extremal Reissner--Nordstr\"om. 

\subsubsection{Review of linear waves on extremal Reissner--Nordstr\"om}\label{sec:linear-review}

Here we briefly review some basic aspects of the theory of solutions to the linear wave equation
\begin{equation}
    \Box_{g_M}\phi=0\label{eq:linear-wave}
\end{equation}
on extremal Reissner--Nordstr\"om. Aretakis initiated the study of this problem in \cite{Aretakis-instability-1,Aretakis-instability-2} but we will make use of technical advances made by the first-named author, Aretakis, and Gajic in \cite{angelopoulos2020late}. For a brief review of the geometry of extremal Reissner--Nordstr\"om, we refer the reader to \cref{sec:ERN} of the present paper, \cite[Section 2.2]{AKU24}, \cite[Section 2]{Aretakis-instability-1}, and the appendix of \cite{stefanos-ern_full}. 

The general strategy to prove energy decay statements for waves on extremal Reissner--Nordstr\"om consists of, as in the subextremal case, deriving a hierarchy of weighted energy boundedness inequalities and time-integrated energy decay estimates. For a scalar solution $\phi$ to \eqref{eq:linear-wave}, the hierarchy takes the form
\begin{multline}\label{eq:boundedness-intro-1} 
    \int_{C(\tau_2)}r^p(\partial_v\psi)^2\, d\omega dv+ \int_{\underline C(\tau_2)} (r-M)^{-p} (\partial_u\psi)^2\,d\omega du+\cdots\\ \les \int_{C(\tau_1)}r^p(\partial_v\psi)^2\, d\omega dv+ \int_{\underline C(\tau_1)} (r-M)^{-p} (\partial_u\psi)^2\,d\omega du+\cdots,
\end{multline} \vspace{-5mm}
\begin{align}
\label{eq:rp-intro}    \int_{\tau_1}^{\tau_2}\int_{C(\tau)} r^{p-1}(\partial_v\psi)^2\, d\omega dvd\tau +\cdots&\les \int_{C(\tau_1)}r^p(\partial_v\psi)^2\, d\omega dv + \cdots,\\
\label{eq:r-Mp-intro}    \int_{\tau_1}^{\tau_2}\int_{\underline C(\tau)} \chi_\mathrm{trap}(r-M)^{-p+1}(\partial_u\psi)^2\,d\omega dud\tau +\cdots&\les\int_{\underline C(\tau_1)}  (r-M)^{-p} (\partial_u\psi)^2\,d\omega du + \cdots,
\end{align}
where $(u,v)$ denote Eddington--Finkelstein double null coordinates on the domain of outer communication, $\tau$ is proper time along a timelike hypersurface $\Gamma$ with constant area-radius, $\tau_1\le\tau_2$, $p\in[0,2]$, $d\omega$ denotes the standard area element of the unit sphere $S^2$, $\cdots$ denotes terms which either contain angular derivatives or which are lower in the $p$-hierarchy, the foliations $C(\tau)$ and $\underline C(\tau)$ are defined via Penrose diagram in \cref{fig:ERN-intro} above, $\psi\doteq r\phi$, and $\chi_\mathrm{trap}$ is a cutoff function that vanishes in a neighborhood of the \emph{photon sphere} $r=2M$. 

 \begin{figure}
\centering{
\def\svgwidth{11pc}
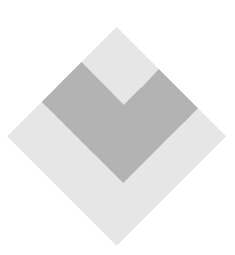}
\caption{A Penrose diagram of extremal Reissner--Nordstr\"om depicting the foliations $C(\tau)$ and $\underline C(\tau)$ used in the estimates \eqref{eq:boundedness-intro-1}--\eqref{eq:r-Mp-intro}. The region of integration in \eqref{eq:rp-intro} and \eqref{eq:r-Mp-intro} is shaded darker. We have also indicated here the photon sphere, located at $r=2M$, where certain derivatives in the Morawetz estimate \eqref{eq:r-Mp-intro} degenerate.}
\label{fig:ERN-intro}
\end{figure}

The inequality \eqref{eq:rp-intro} is the celebrated \emph{$r^p$-weighted estimate} of Dafermos and Rodnianski \cite{dafermos2010new} and relies only on the asymptotic flatness of the metric. The estimate \eqref{eq:r-Mp-intro} is specific to the extremal case and can be thought of as a ``horizon analogue'' of the $r^p$ estimates at $\mathcal I^+$: The event horizon $\mathcal H^+$ of extremal Reissner--Nordstr\"om has $r=M$ and hence $r-M$ is a degenerate weight towards $\mathcal H^+$, dual to $r^{-1}$ at $\mathcal I^+$. The estimate \eqref{eq:r-Mp-intro} states that the time integral of the $(p-1)$-weighted horizon flux is bounded by the initial value of the $p$-weighted horizon flux. This horizon hierarchy---special cases of which were introduced in \cite{Aretakis-instability-1} and in full generality was derived in \cite{angelopoulos2020late}---replaces the fundamental redshift estimate from \cite{dafermos2009red,dafermos2013lectures} in the subextremal case. Note that the $p=0$ horizon flux is equivalent to the $T$-energy, where $T$ is the time-translation Killing field. The degeneration of \eqref{eq:r-Mp-intro} at the photon sphere due to trapping is familiar from the subextremal case and we refer to \cite{dafermos2009red,Aretakis-instability-1,dafermos2013lectures} for discussion.

Using the pigeonhole principle as in \cite{dafermos2010new}, \eqref{eq:boundedness-intro-1}--\eqref{eq:r-Mp-intro} can be used to prove the energy decay estimate
\begin{equation}\label{eq:ED}
    \int_{C(\tau)} r^p(\partial_v\psi)^2\, d\omega dv+\int_{\underline C(\tau)} (r-M)^{-p} (\partial_u\psi)^2\,d\omega du \le C_\star\tau^{-2+p},
\end{equation} for every $p\in[0,2]$ and $\tau\ge\tau_0$, where $C_\star$ is a constant depending on the data at $\tau=\tau_0$. (In fact, $C_\star$ necessarily depends on a higher order norm of $\phi$ \cite{Sbierski-Gaussian}.) This estimate can then be used to prove pointwise decay of $\psi$ itself. By commuting the wave equation with $T$ and the rotation vector fields $\Gamma_1,\Gamma_2$, and $\Gamma_3$, higher order versions of \eqref{eq:boundedness-intro-1}--\eqref{eq:ED} can be proved.

\subsubsection{The main energy hierarchy and the structure of the error terms}\label{sec:str-intro}

Following the usual strategy to prove global existence for nonlinear wave equations on black hole spacetimes, we seek to establish the estimates \eqref{eq:boundedness-intro-1}--\eqref{eq:r-Mp-intro} for solutions $\phi:\mathcal M\to\Bbb R^N$ of \eqref{eq:wave}. We assume the nonlinearity $\mathcal N(x,\phi,d\phi)$ is a sum of terms of the form
\begin{equation}\label{eq:nlin-intro-1}
    f(x,\phi)Y\phi_\alpha \partial_v\phi_\beta \quad\text{or}\quad f(x,\phi)r^{-2}\Gamma_i\phi_\alpha \Gamma_j\phi_\beta
\end{equation}
for $x$ close to $\mathcal H^+$ or $\mathcal I^+$, where $\phi_\alpha$ denotes a scalar component of $\phi$. We refer to nonlinearities of the form  \eqref{eq:nlin-intro-1} as \emph{strong null form} nonlinearities.

Given an index $p$, a number of commutations $k$ (where we allow for any combination of $k$ commutations with $T,\Gamma_1,\Gamma_2$, and $\Gamma_3$), we define the energy $\mathcal E_{p,k}(\tau)$ to be the $r^p$ energy on the left-hand side of \eqref{eq:boundedness-intro-1}, $\underline{\mathcal E}{}_{p,k}(\tau)$ to be the $(r-M)^{-p}$ energy on the left-hand side of \eqref{eq:boundedness-intro-1}, and then define the master energy\footnote{We also include energy fluxes along outgoing cones in the near region and along ingoing cones in the far region, but suppress these at this level of discussion.} 
\[\mathcal X_{p,k}(\tau_1,\tau_2)\doteq \sup_{\tau\in[\tau_1,\tau_2]}\big(\mathcal E_{p,k}(\tau)+\underline{\mathcal E}{}_{p,k}(\tau)\big)+\text{RHS of \eqref{eq:r-Mp-intro} with $k$ commutations}.\]
Since the estimates \eqref{eq:boundedness-intro-1}--\eqref{eq:r-Mp-intro} for the linear wave equation \eqref{eq:linear-wave} are proved by direct integration by parts arguments, we may directly repeat the proofs for the nonlinear equation \eqref{eq:wave} (treated as a linear equation with inhomogeneity) to obtain
\begin{equation}\label{eq:intro-h-1}
    \mathcal X_{p,k}(\tau_1,\tau_2) \les \mathcal E_{p,k}(\tau_1)+\underline{\mathcal E}{}_{p,k}(\tau_2)+\mathbb T_k(\tau_1,\tau_2) + \mathbb E_{p,k}(\tau_1,\tau_2)+\underline{\mathbb E}{}_{p,k}(\tau_1,\tau_2),
\end{equation}
where $\mathbb T_k(\tau_1,\tau_2)$, $\mathbb E_{p,k}(\tau_1,\tau_2)$, and $\underline{\mathbb E}{}_{p,k}(\tau_1,\tau_2)$ are spacetime integral nonlinear errors that are at least cubic in $\phi$, and we have omitted some ``anomalous'' error terms arising from the $T$-energy and Morawetz estimates. Here $\mathbb T_k(\tau_1,\tau_2)$ denotes an error term which is supported in the ``medium-$r$'' region away from $\mathcal H^+$ and $\mathcal I^+$ and requires special care because of the degeneration of \eqref{eq:r-Mp-intro} at the photon sphere $r=2M$. The terms $\mathbb E{}_{p,k}(\tau_1,\tau_2)$ and $\underline{\mathbb E}{}_{p,k}(\tau_1,\tau_2)$ come from the $r^p$ and $(r-M)^{-p}$ estimates, respectively, and take the form 
\begin{align}
\label{eq:intro-error-1}   \mathbb E_{p,k}(\tau_1,\tau_2)&\doteq \int_{\tau_1}^{\tau_2}\int_{C(\tau)} r^p\big(|\partial_v\phi^{n-6}||\partial_u\psi^k|+|\partial_u\phi^{n-6}||\partial_v\psi^k|+\cdots\big)|\partial_v\psi^k|\,d\omega dvd\tau ,\\
     \underline{\mathbb E}{}_{p,k}(\tau_1,\tau_2) &\doteq \int_{\tau_1}^{\tau_2}\int_{\underline C(\tau)} (r-M)^{-p}\big(|\partial_v\phi^{n-6}||\partial_u\psi^k|+|\partial_u\phi^{n-6}||\partial_v\psi^k|+\cdots\big)|\partial_u\psi^k| \,d\omega dud\tau,  \label{eq:intro-error-2} 
\end{align} where we recall the notation $\psi=r\phi$, the notation $f^k$ for a function $f:\mathcal M\to\Bbb R^N$ means the collection of all instances of $f$ with at most $k$ applications of the vector fields $T,\Gamma_1,\Gamma_2$, and $\Gamma_3$, and $n\ge 12$ is the maximum number of commutations we allow in our bootstrap scheme below. 

The proof of \eqref{eq:intro-h-1} is carried out in \cref{sec:a-priori} and the precise structure of the error terms is obtained later in \cref{sec:structure}. These energy hierarchies (in fact, extended versions thereof) were already obtained in \cite{AAG20}. The novelty of the present paper lies in how we handle the error terms in \eqref{eq:intro-h-1}, in particular, being able to close the bootstrap argument without extending the hierarchies past $p=2$.  

\subsubsection{The bootstrap assumptions}\label{sec:boot-intro}

As usual, we control the nonlinear errors in \eqref{eq:intro-h-1} by a continuity argument. Let $\ve_0\ge \|\mathring\phi\|_\star$ be the global smallness parameter controlling the initial data, let $A$ be a large constant to be determined, fix $0<\delta<\frac {1}{100}$ an auxiliary parameter, fix $n\ge 12$ the maximum number of commutations, and set $\ve\doteq A\ve_0$. For $\tau_f$ a bootstrap time, we assume the pointwise estimates
\begin{align}
 \label{eq:boot-intro-1}     r|\partial_u\phi^{n-6}|+ r^2|\partial_v\phi^{n-6}|& \le \ve^{1/2} \quad\text{for $\{r\ge\Lambda\}\cap\{\tau\le\tau_f\}$},\\
 \label{eq:boot-intro-2}        (r-M)^{-3/2+\delta}|\partial_u\phi^{n-6}|+|\partial_v\phi^{n-6}| & \le \ve^{1/2}\quad\text{for $\{r\le\Lambda\}\cap\{\tau\le\tau_f\}$},
\end{align} the energy estimates
\begin{align}
  \label{eq:boot-intro-3}     \mathcal X_{p,n-2}(\tau_1,\tau_2)&\le \ve^2\tau^{-2+\delta+p}_1 \hspace{-35mm} &\text{for $p\in[0,2-\delta]$},\\
 \label{eq:boot-intro-4}       \mathcal X_{p,n-1}(\tau_1,\tau_2) &\le \ve^2 \tau^{\max\{-1-\delta+p,-1\}}_1 \hspace{-35mm} &\text{for $p\in[0,1+\delta]$}, \\
  \label{eq:boot-intro-5}       \mathcal X_{p,n}(\tau_1,\tau_2)&\le \ve^2 \tau^{\max\{0,-1+3\delta+p\}}_2 \hspace{-35mm} &\text{for $p\in[0,1+\delta]$}\textcolor{white}{,}
\end{align}
when $\tau_1\le\tau_2\le\tau_f$, and the low-order nondegenerate integrated energy decay estimate at trapping
\begin{equation}\label{eq:boot-intro-6}
    \int_{\tau_1}^{\tau_2}\int_{\underline C(\tau)\cap\{d\chi_\mathrm{trap}\ne 0\}}|\partial\phi^{n-3}|^2\,d\omega dud\tau\le \ve^{3/2}\tau_1^{-2+\delta},
\end{equation}
where $\partial\phi^{n-3}$ denotes the collection of all first derivatives of $\phi^{n-3}$. Note that we do not directly assume any pointwise decay statements. Moreover, \eqref{eq:boot-intro-3} and \eqref{eq:boot-intro-4} give boundedness at the top end of the indicated $p$-ranges, but $\mathcal X_{1+\delta,n}(\tau_1,\tau_2)$ is allowed to grow mildly, like $\tau_2^{4\delta}$. 

As is standard, we use the bootstrap assumptions \eqref{eq:boot-intro-1}--\eqref{eq:boot-intro-6} to control the nonlinear errors in \eqref{eq:intro-h-1} so that after applying the standard pigeonhole argument of Dafermos--Rodnianski \cite{dafermos2010new}, we improve the constant on the right-hand side of \eqref{eq:boot-intro-3}--\eqref{eq:boot-intro-5} to $\tfrac 12$ for $A$ taken sufficiently large. The power of $\ve$ in \eqref{eq:boot-intro-1} and \eqref{eq:boot-intro-2} is improved using the method of characteristics and the power of $\ve$ in \eqref{eq:boot-intro-6} is improved by commuting with angular derivatives and relating the integral to $\mathcal X_{p,n-2}(\tau_1,\tau_2)$. We will outline some of these estimates in the following section.  
The precise definition of the bootstrap set is given in \cref{sec:bs-set}.

\subsubsection{Outline of the main error estimates}\label{sec:intro-outline}

To estimate the errors $\mathbb T_k$, $\mathbb E_{p,k}$, and $\underline{\mathbb E}{}_{p,k}$ in \eqref{eq:intro-h-1}, we follow the typical strategy of proving pointwise estimates for terms with $\le n-6$ derivatives (``lower order terms'') and applying the energy bootstraps \eqref{eq:boot-intro-3}--\eqref{eq:boot-intro-5}. However, as is well known, without proving some \emph{decay} for the lower order terms, such a procedure cannot close. Inspired by the work of Dafermos, Holzegel, Rodnianski, and Taylor in \cite{DHRT22}, we close the bootstrap argument using only pointwise estimates which are also integrated in $u$ or $v$ and can be directly obtained from the bootstraps in \cref{sec:boot-intro} without resorting to proving ``more decay.'' 

For example, the first error term in \eqref{eq:intro-error-2} can be estimated by
\begin{equation*}
    \int_{\tau_1}^{\tau_2}(r-M)^{-p}|\partial_v\phi^{n-6}||\partial_u\psi^k|^2 \,d\omega du dv \les \sup_{\tau\in[\tau_1,\tau_2]}\underline{\mathcal E}{}_{p,k}(\tau)\cdot\int_{\tau_1}^{\tau_2}\sup_{\underline C(\tau)}|\partial_v\phi^{n-6}| \,d\tau\les \ve^{3/4}\mathcal X_{p,k}(\tau_1,\tau_2)
\end{equation*}
if we can prove the $L^1_v L^\infty_{u,\omega}$ estimate
\begin{equation}\label{eq:L1-Linfty-intro}
    \int_{\tau_1}^{\tau_2}\sup_{\underline C(\tau)}|\partial_v\phi^{n-6}| \,d\tau\les\ve^{3/4}. 
\end{equation}  By breaking up the resulting spacetime integral along dyadic times and performing Cauchy--Schwarz in every dyadic interval, \eqref{eq:L1-Linfty-intro} can be inferred from the $L^2_v L^\infty_{u,\omega}$ estimate
\begin{equation}\label{eq:L2-Linfty-intro}
    \int_{\tau_1}^{\tau_2}\sup_{\underline C(\tau)}|\partial_v\phi^{n-6}|^2 \,d\tau\les \ve^{3/2}\tau_1^{-2+3\delta/2}.
\end{equation}
This estimate, which already appeared in \cite{AAG20}, is proved by first estimating the supremum over $\underline C(\tau)$ using the fundamental theorem of calculus in $u$, the wave equation, and the Sobolev inequality on $S^2$. This yields a spacetime integral which is directly estimated by the bootstrap assumptions. (The fractional powers of $\ve$ in \eqref{eq:L1-Linfty-intro} and \eqref{eq:L2-Linfty-intro} are artifacts of the bootstrap argument and can be removed after global existence has been proved.) We also have analogous $L^2_uL^\infty_{v,\omega}$ and $L^1_u L^\infty_{u,\omega}$ estimates for $r\partial_u\phi^{n-6}$ in the far region. 

Bulk error terms in the medium-$r$ region, including $\mathbb T_k(\mathcal R)$, are handled by commuting with rotation vector fields to remove trapping, as in the original \cite{dafermos2009red}, and then using $L^1L^\infty$ estimates for lower order terms (which are obtained from \eqref{eq:boot-intro-6}) as in \cite{DHRT22}. In particular, the nondegenerate $(n-3)$-commuted nondegenerate Morawetz estimate is controlled by the $(n-2)$-commuted degenerate Morawetz estimate, so that \eqref{eq:boot-intro-6} is immediately improved by \eqref{eq:boot-intro-3} for $\ve_0$ sufficiently small. 

We also require $r$-weighted pointwise estimates for $\partial_v\phi^{n-6}$ and $(r-M)$-weighted pointwise estimates for $\partial_u\phi^{n-6}$. As indicated by \eqref{eq:boot-intro-2}, we propagate boundedness of $(r-M)^{-3/2+\delta}|\partial_u\phi^{n-6}|$ near $\mathcal H^+$. Note crucially that $(r-M)^{-3/2+\delta}|\partial_u\phi^{n-6}|$ in fact \emph{vanishes} on $\mathcal H^+$ since the vector field $(r-M)^{-3/2+\delta}\partial_u$ does.  Using the method of characteristics and our bootstrap assumptions, we improve the $\ve$-power in \eqref{eq:boot-intro-2} and obtain the \emph{growing} estimate
\begin{equation}\label{eq:intro-growth}
    (r-M)^{-2}|\partial_u\phi^{n-6}|\les \ve \tau^{1/2+\delta/2}
\end{equation}
near $\mathcal H^+$, where we recall that $Y=(1-\frac{M}{r})^{-2}\partial_u$ is the nondegenerate translation-invariant null derivative transverse to $\mathcal H^+$. The reason for the power $1/2+\delta/2$ is that our bootstrap assumptions only imply that angular derivatives of $\phi^{n-6}$ decay like $\tau^{-1/2+\delta/2}$ along $\mathcal H^+$ since we do not extend the $p$-range for $\ell\ge 1$ modes of the solution (unlike \cite{AAG20}). By interpolation, we obtain the estimate \[(r-M)^{-q}|\partial_u\phi^{n-6}|\les \ve \tau^{(\frac{1+\delta}{1+2\delta})(q-3/2+\delta)}\] near $\mathcal H^+$, with $q\in [3/2 -\delta ,2]$. For different values of $p$ and $k$, we utilize this estimate with different values of $q$ depending on which $(r-M)$-weight is required to stay within the $(r-M)^{-p}$-hierarchy. For instance, when $p=2-\delta$ and $k=n-2$, we get enough decay from the Morawetz estimate for $\partial_v\phi^{n-2}$ that we can plug the growing estimate \eqref{eq:intro-growth} into \eqref{eq:intro-error-2} and easily recover boundedness. However, for the case $p=1+\delta$ and $k=n$, we are forced to use a mildly growing pointwise estimate in \eqref{eq:intro-error-2} in order to obtain the correct $(r-M)$-weights, which forces mild growth on $\mathcal X_{1+\delta,n}(\tau_1,\tau_2)$. 

\begin{rk}
    It seems necessary to take $p$ up to $1+\delta$ for $k=n$ because of one specific term in the $T$-energy error estimate (which is anomalous and hence was not discussed here). See already the estimate for $\mathbb E(\partial_v,\mathrm{i},k,\mathcal R)$ in \cref{sec:T-error} below. This is different from the situation at null infinity, where boundedness for the $T$-energy is also anomalous but can be handled with $p= \delta$ (see for instance \cite[Appendix C]{DHRT22}). It is precisely going above $p=1$ that forces topmost order energy growth in our bootstrap scheme. 
\end{rk}

\begin{rk}
    The growing upper bound in \eqref{eq:intro-growth} is caused by $\slashed\Delta\phi$ and $|\slashed\nabla\phi|^2$ terms in the equation for $\partial_vY\psi$, where $\slashed\nabla$ and $\slashed\Delta$ are the induced covariant derivative and Laplacian, respectively, on the symmetry spheres. Since these terms are automatically localized to $\ell\ge 1$ modes, the scheme of \cite{AAG20} shows that (under the stronger smallness assumption of \cref{thm:AAG20}) these terms are integrable in $\tau$. Note that restricted to spherically symmetric data $\mathring\phi$, these terms simply do not appear, and hence boundedness of $Y\phi$ is automatic. Therefore, restricted to spherical symmetry, \cref{thm:rough} generalizes the result of \cite{A16}, including the Aretakis instability, to systems satisfying the strong null condition.
\end{rk}

The $L^1L^\infty$ and $L^2 L^\infty$ estimates are obtained in \cref{sec:L1-L2}, the pointwise estimates are obtained in \cref{sec:pw}, the bulk terms in the medium-$r$ region are estimated in \cref{sec:trapping}, the $T$-energy and Morawetz  errors are estimated in \cref{sec:T-error}, the $(r-M)^{-p}$  errors are estimated in \cref{sec:horizon-error}, and the $r^p$ errors are estimated in \cref{sec:rp-error}.

Finally, the proof of the main theorem is completed in \cref{sec:completing}.

\subsection{Comments on other equations and settings}\label{sec:other-settings}

\subsubsection{Remarks on other semilinearies} \label{sec:other-eqns}

The strong null form nonlinearities we treat in this paper (recall \eqref{eq:nlin-intro-1}) are natural generalizations of the most basic null form $Q_0(\phi_\alpha,\phi_\beta)=g^{-1}(d\phi_\alpha,d\phi_\beta)$, but do not include all of the larger class of standard quadratic nonlinearities of the form $\partial\phi_\alpha\bar\partial\phi_\beta$, where $\bar\partial\in\{\partial_v,r^{-1}\Gamma_1,r^{-1}\Gamma_2,r^{-1}\Gamma_3\}$ denotes a ``good derivative'' and $\partial$ is allowed to be $\bar\partial$ or $Y$. See for instance \cite{luk2013null,DHRT22} for work on these types of equations in the subextremal case (where the structure at $\mathcal H^+$ is unimportant). Moreover, as already shown by Aretakis in \cite{aretakis2013nonlinear}, the extremal case is also sensitive to higher order derivative terms at $\mathcal H^+$, i.e., cubic (and higher) terms at the horizon cannot just be ignored, unlike at $\mathcal I^+$. One can also entertain equations with degenerate $(r-M)$-weights on the nonlinearity to weaken the contribution of bad $Y$ derivatives. 

Understanding the general structure of equations on extremal Reissner--Nordstr\"om for which global stability can hold is an interesting open problem. Unfortunately, the methods in the present paper already seem to fail for some ``good $\cdot$ bad'' quadratic nonlinearities such as $r^{-1}\Gamma_i\phi_\alpha Y\phi_\beta$. This term produces an $(r-M)^{-p}$ error of the form
\begin{equation}\label{eq:intro-aux-1}
    \iint(r-M)^{-p} |\partial_u\phi^{n-6}||\slashed\nabla\phi^k||\partial_u\psi^k|\,d\omega du dv.
\end{equation} Since the $(r-M)^{-p}$ bulk has weights $(r-M)^{-p+3}$ for $|\slashed\nabla\phi^k|^2$ and $(r-M)^{-p+1}$ for $|\partial_u\psi^k|^2$ , estimating \eqref{eq:intro-aux-1} seems to require \emph{boundedness} for $Y\phi^{n-6}\sim (r-M)^{-2}\partial_u\phi^{n-6}$, which as we explained in \cref{sec:intro-outline} does not follow from our scheme. Using the stronger (and more complicated) estimates from \cite{AAG20}, it does appear however that one can prove global stability for $r^{-1}\Gamma_i\phi_\alpha Y\phi_\beta$ under a suitable smallness assumption for $\|\cdot\|_0$. Therefore, it would seem like a full classification of equations for which stability holds \emph{does} require refined estimates at the horizon, which would then mean that the picture is also  different for the charged scalar field and Kerr cases.

\begin{rk}
    The methods in this paper \emph{do} apply to all semilinear terms of the form $f(x,\phi)\bar\partial\phi_\alpha\bar\partial\phi_\beta$, but as the ``angular $\cdot$ angular'' terms considered in \eqref{eq:nlin-intro-1} seem to already behave the worst, we omit the detailed estimates for the other types. 
\end{rk}

\subsubsection{Semilinear systems on asymptotically extremal backgrounds} 

Recently, the present authors, in collaboration with Kehle, established the codimension-one nonlinear asymptotic stability of the extremal Reissner--Nordstr\"om exterior in the nonlinear Einstein--Maxwell-neutral scalar field system in spherical symmetry \cite{AKU24}. Note that codimension one is optimal as extremal Reissner--Nordstr\"om is itself codimension one in the Reissner--Nordstr\"om family. Moreover, we showed that the Aretakis instability for the scalar field on the dynamical background persists in this setting. 

With a view towards the problem of nonlinear stability of extremal Reissner--Nordstr\"om for the Einstein--Maxwell system outside of symmetry, it is natural to study the analogue of \cref{thm:rough} on the spherically symmetric asymptotically extremal black hole spacetimes constructed in \cite{AKU24}. 

\begin{thm}\label{thm:dynamical}
    \cref{thm:rough} holds for systems of semilinear wave equations of the form \eqref{eq:wave} on the asymptotically extremal spacetimes constructed in \cite{AKU24}, provided the nonlinearity $\mathcal N$ satisfies the null condition from \cref{def:null-cond} below \ul{everywhere} in the spacetime.
\end{thm}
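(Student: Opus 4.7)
The plan is to mirror the proof of \cref{thm:rough}, carefully tracking the new error terms generated by the dynamical background. The AKU24 spacetimes are spherically symmetric and converge quantitatively to extremal Reissner--Nordstr\"om at a polynomial rate as $v\to\infty$; in particular, the rotation vector fields $\Gamma_1,\Gamma_2,\Gamma_3$ remain exact Killing fields of the dynamical metric $g$, while the time-translation vector field $T$ is only \emph{asymptotically} Killing, with $\mathcal L_T g$ decaying at the background convergence rate.

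First, I would establish the analogues of the linear estimates \eqref{eq:boundedness-intro-1}--\eqref{eq:r-Mp-intro} on the AKU24 background. The same vector field multipliers used in the stationary case produce inequalities of the same structural form, with new bulk error terms proportional to $g - g_M$ and its derivatives. Using the sharp convergence rates from \cite{AKU24}, these can be absorbed into $\mathcal X_{p,k}$, possibly after placing the initial data on a sufficiently late Cauchy hypersurface. The nondegenerate Morawetz estimate \eqref{eq:boot-intro-6} at trapping is similarly derived, with the photon sphere dynamically located near $r=2M$.

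Next, I would run the bootstrap scheme from \cref{sec:boot-intro} on the dynamical background. The hypothesis that the null condition holds \ul{everywhere} is essential because on a non-stationary background the nonlinear error structure must cooperate with multiplier deformation terms and $T$-commutator errors that are active at \emph{all} radii, not only in the near-horizon and far regions. Given this global null structure, the error estimates in \cref{sec:L1-L2,sec:pw,sec:trapping,sec:T-error,sec:horizon-error,sec:rp-error} all go through with only cosmetic modifications; in particular, the $L^1L^\infty$ and $L^2L^\infty$ reductions, which rely only on the wave equation and Sobolev embeddings on $S^2$, are unchanged. The $T$-commutator errors are bounded by integrating the bootstrap energies against the background decay rate, and they can be made at least as small as the nonlinear errors already estimated.

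The main obstacle will be the precise near-horizon analysis, where the dynamical event horizon $r=r_+(v)\searrow M$ introduces corrections to the weight $(r-M)^{-p}$ on which the horizon hierarchy \eqref{eq:r-Mp-intro} depends, and where the Aretakis-type conservation laws that drive \eqref{eq:r-Mp-intro} are sensitive to the dynamical geometry. These corrections are governed by the sharp rate of $r_+(v)-M$ established in \cite{AKU24} and, when combined with the fact that our scheme avoids commuting with $Y$, can be controlled without disturbing the qualitative structure of \eqref{eq:r-Mp-intro}. Once the linear theory is verified in this setting, the nonlinear bootstrap closes exactly as in the stationary case, completing the proof along the lines of \cref{sec:completing}.
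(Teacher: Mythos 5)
Your proposal misses the central mechanism by which the global null condition is used, and as a result it contains a step that would not close with the tools actually available. You propose to continue commuting with $T$, treating $T$ as asymptotically Killing and bounding the resulting $\mathcal L_T g$ commutator errors ``by integrating the bootstrap energies against the background decay rate.'' But the point of the remark following \cref{thm:dynamical} is precisely that higher-order estimates for the dynamical metrics constructed in \cite{AKU24} are \emph{not available}; one cannot control the quantities that would appear in such commutator errors. The proof in \cref{app:A} therefore \emph{avoids $T$-commutation entirely}: the rotation fields remain exactly Killing, and the only place $T$-commutation was used in the stationary proof was in the trapping estimate of \cref{prop:removing-trapping,prop:trapping-1} to handle non--null-form error terms like $|\partial_u\phi^{n-6}||\partial_u\phi^k|^2$. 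When the null condition holds everywhere, those terms are replaced by null-form terms such as $|\partial_u\phi^{n-6}||\partial_u\phi^k||\partial_v\phi^k|$, which can be handled by Cauchy--Schwarz together with an $L^2_vL^\infty_{u,\omega}$ estimate for $\partial_u\phi^{n-6}$ on dyadic timescales (using the wave equation and the fundamental theorem of calculus in the $v$-direction), thereby eliminating the need for $T$-commutation. Your explanation of why the global null condition is needed---``the nonlinear error structure must cooperate with multiplier deformation terms and $T$-commutator errors at all radii''---is not the operative reason.

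A second gap concerns the a priori energy identities. You propose running the ``same vector field multipliers'' and absorbing error terms ``proportional to $g-g_M$ and its derivatives.'' Attempting this directly generates terms like $\partial_u^2 r$ and $\partial_v\kappa$ in the integrations by parts, and these are \emph{not} estimated in \cite{AKU24}. The proof instead works with the eschatologically normalized $(u,v)$ coordinates and the reference area-radius $\bar r$ (not a dynamical ``$r_+(v)$'' as in your near-horizon discussion), and modifies the coordinate multiplier vector fields according to $\partial_u\mapsto \tfrac{\partial_u\bar r}{\partial_u r}\partial_u$ and $\partial_v\mapsto \tfrac{1}{\kappa}\partial_v$, precisely so that uncontrolled second-order geometric quantities never appear. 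Without this modification (or an equivalent device), the bulk error terms in your linear estimates involve quantities for which no bound is available, and the bootstrap would not close.
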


\begin{rk}
    The extra assumption on the nonlinearity $\mathcal N$ is a purely technical assumption coming from the fact that higher-order estimates for the dynamical black holes in \cite{AKU24} are not yet available. Note that this theorem does apply as stated to the wave map system, for instance.    
\end{rk}

We will not prove \cref{thm:dynamical} in detail, but will outline the proof, emphasizing the additional difficulties compared to \cref{thm:rough}, in \cref{app:A} below. It remains to be seen what the ``correct'' assumptions for non-spherically symmetric perturbations of extremal Reissner--Nordstr\"om are that accurately reflect the expected behavior of the Einstein--Maxwell system. 

\subsection*{Acknowledgments} The authors would like to thank John Anderson, Mihalis Dafermos, Christoph Kehle, and Jonathan Luk for helpful conversations. R.U. acknowledges support from the NSF grant DMS-2401966.

\section{Preliminaries}

\subsection{The extremal Reissner--Nordstr\"om background}\label{sec:ERN}

In this section, we briefly review some important geometric aspects of Reissner--Nordstr\"om black hole spacetimes and set up our notation. For more information, we refer the reader to \cite[Section 2.2]{AKU24}, \cite[Section 2]{Aretakis-instability-1}, and the appendix of \cite{stefanos-ern_full}. 
Fix $M>0$ and let $\mathcal M\doteq (-\infty,\infty)_v\times (0,\infty)_r\times S^2$, where $S^2$ carries standard spherical polar coordinates $(\vartheta,\varphi)$. Then the \emph{extremal Reissner--Nordstr\"om metric with mass $M$} on $\mathcal M$ is given by
\begin{equation*}
    g = -4Ddv^2+4dvdr +r^2\mathring{\slashed g},
\end{equation*}
where 
 \begin{equation*}
     D(r)\doteq \left(1-\frac{M}{r}\right)^2
 \end{equation*}
 and $\mathring{\slashed g}\doteq d\vartheta^2+\sin^2\vartheta\,d\varphi^2$ is the standard round metric on the unit sphere. The coordinates $(v,r)$ are called \emph{ingoing Eddington--Finkelstein} coordinates. The metric $g$ describes a static black hole spacetime with event horizon $\mathcal H^+$ located at $r=M$.  The vector field $T\doteq\frac 12\partial_v$ is the time-translation Killing vector field (equal to $\partial_t$ in Schwarzschild coordinates $(t,r)$, in the domain of outer communication).
The vector field $Y\doteq \partial_r$ with respect to $(v,r)$-coordinates is past-directed null and is transverse to $\mathcal H^+$. 
 
 We define the \emph{tortoise coordinate} $r_*=r_*(r)$ by $dr_* /dr=D^{-1}$ with normalization $r_*(2M)=0$. Explicitly,
\begin{equation*}
 r_*(r)=   r-M-\frac{M^2}{r-M}+2M\log\left(\frac{r-M}{M}\right).
\end{equation*} The Schwarzschild $t$ coordinate is given by $t\doteq 2v-r_*$.
Let $\Lambda \ge 100M$ be a constant to be chosen later. Using $r_*$, we can now bring $g$ into \emph{double null} form. Let $u\doteq v-r_*+c$, where $c$ is a normalization constant chosen so that $r=\Lambda$ at $\{u=0\}\cap\{v=0\}$. With respect to the coordinates $(u,v,\vartheta,\varphi)$, $g$ takes the form 
\begin{equation*}
    g = -4D dudv+r^2\mathring{\slashed g},
\end{equation*} where $r$ is now an implicitly defined function of $u$ and $v$.
These coordinates only cover the domain of outer communication $\{r>M\}$.  The event horizon $\mathcal H^+$ formally corresponds to $u=+\infty$ and null infinity $\mathcal I^+$ formally corresponds to $v=+\infty$. From the identity $r_*(r)=v-u+c$, we infer that in these coordinates, 
\begin{equation*}
  T=  \tfrac 12(\partial_u+\partial_v),\quad Y = -D^{-1}\partial_u,\quad   \partial_ur =- D,\quad \partial_v r = D.
\end{equation*}

The sets of constant $u$ and $v$ are null hypersurfaces with respect to $g$. The intersection of the hypersurfaces $\{u=u'\}$ and $\{v=v'\}$ is diffeomorphic to $S^2$ and is denoted by $S_{u',v'}$, with induced Riemannian metric
\begin{equation*}
    \slashed g\doteq r^2\mathring{\slashed g}.
\end{equation*}
The vector fields $\partial_u$ and $\partial_v$ are orthogonal to $S_{u',v'}$.

Define $\hat u = \hat u(u)$ by $d\hat u/du = D|_{v=0}$ with normalization $\hat u(0)=0$. Then $(\hat u,v)$ is again a double null coordinate system, but is now regular across $\mathcal H^+$. We shall pose initial data on the bifurcate null hypersurface 
\begin{equation}\label{eq:Sigma-0}
    \Sigma_0\doteq \big(\{0\le \hat u \le \Lambda + \tfrac 12 M\}\times\{v=0\} \times S^2\big)\cup \big(\{\hat u=0\}\times \{v\ge 0\}\times S^2\big)
\end{equation} and we will primarily be concerned with the region
   $ \mathcal D\doteq \{u\ge 0\}\cap \{v\ge 0\}\subset D^+(\Sigma_0)$.
   
We can think of the timelike hypersurface $\Gamma\doteq\{r=\Lambda\}\subset\mathcal D$ as a timelike curve in the $(u,v)$ plane. Let $\tau$ be a proper time parametrization of $\Gamma$, normalized to have $\tau=1$ at $(u,v)=(0,0)$. Extend $\tau$ to a continuous function on $\mathcal D$ by requiring it to be constant in $u$ for $r<\Lambda$ and constant in $v$ for $r>\Lambda$. It follows that $\partial_v\tau\sim 1$ for $r\le\Lambda$ and $\partial_u\tau\sim 1$ for $r\ge \Lambda$. 

\subsection{Angular derivatives and notation for commuted quantities}

Let $\slashed\nabla$ denote the induced covariant derivative on the symmetry spheres $S_{u,v}$. Note that $\slashed\nabla$ computed relative to $\slashed g$ and $\mathring{\slashed g}$ is in fact the same connection, since the Christoffel symbols are scaling invariant. We use capital Latin letters ($A$, $B$, etc.) to denote abstract indices for tensors intrinsic to the symmetry spheres. Given a tensor field $\slashed T$ tangent to the symmetry spheres, we define the norm $|\slashed T|$ relative to $\slashed g$ and $|\slashed T|_\circ$ relative to $\mathring{\slashed g}$. For example, if $f$ is a function on $S_{u,v}$, then
\begin{equation*}
    |\slashed\nabla f| \doteq  \big((\slashed g^{-1})^{AB} \slashed\nabla_Af\slashed\nabla_B f\big)^{1/2} = \big(r^{-2}(\mathring{\slashed g}^{-1})^{AB} \slashed\nabla_Af\slashed\nabla_B f\big)^{1/2} =r^{-1}|\slashed\nabla f|_\circ.
\end{equation*}

In this paper, we will commute with rotation vector fields, which are Killing fields for $g$. Relative to the standard spherical coordinates $(\vartheta,\varphi)$, these are defined to be
\begin{equation*}
    \Gamma_1\doteq \partial_{\varphi}, \quad
    \Gamma_2\doteq \sin\varphi\partial_\vartheta+\cot\vartheta\cos\varphi\partial_\varphi, \quad
    \Gamma_3\doteq \cos\varphi\partial_\vartheta-\cot\vartheta\sin\varphi\partial_\varphi
\end{equation*}
and satisfy the $\mathfrak{so}(3)$ commutation relation $[\Gamma_i,\Gamma_j]=\epsilon_{ij}{}^k\Gamma_k$. We also set
\begin{equation*}
    \Gamma_0\doteq T,
\end{equation*}
the time-translation Killing field, which commutes with the $\Gamma_i$'s. 

For $k\in\Bbb N_0$, let $\mathbf k=(k_1,\dotsc, k_k)$ denote a $k$-tuple for which each entry is either 0, 1, 2, or 3. We write $|\k|=k$. Given a function $f$ on (an open set of) $\mathcal M$, we define
\begin{equation*}
    \Gamma^\k f \doteq f^\k \doteq \Gamma_{k_k}\cdots \Gamma_{k_1} f.
\end{equation*}
In this context, we also define
\begin{equation*}
    |f^k|\doteq \sum_{|\mathbf k|\le k} |f^\k|.
\end{equation*}
Therefore, instead of writing for instance ``$|f^\k|\les 1$ for all $|\k|\le n-6$,'' it suffices to simply write ``$|f^{n-6}|\les 1$.'' It will always be clear from context that $k$ is not meant as a standard exponential when this notation is being used.

Since the vector fields $\Gamma_i$ span the symmetry spheres, it holds that
\begin{equation}\label{eq:ang-1}
    \sum_{i=1}^3|\Gamma_i f|\sim |\slashed\nabla f|_\circ \sim r|\slashed\nabla f|.
\end{equation}
The induced Laplacian on the symmetry spheres is defined by
\begin{equation*}
    \slashed\Delta\doteq (\slashed g^{-1}){}^{AB}\slashed\nabla_A\slashed\nabla_B = r^{-2}\mathring{\slashed\Delta},
\end{equation*}
where $\mathring{\slashed\Delta}$ is the Laplacian on the unit sphere. Since $\mathring{\slashed\Delta}=(\Gamma_1)^2+(\Gamma_2)^2+(\Gamma_3)^2$ on functions, it holds that 
\begin{equation}\label{eq:ang-2}
    |\slashed\Delta f|\les \sum_{i=1}^3r^{-1}|\slashed\nabla \Gamma_i f|\les r^{-1}|\slashed\nabla f^1|.
\end{equation}

\begin{rk}
    The second inequality is very suboptimal, as our notation allows for an application of $T$ on the right-hand side. We do not make an attempt to carefully distinguish between angular and $T$ commutations in this paper, so this estimate suffices for our purposes.
\end{rk}

We use the shorthand $\omega=(\vartheta,\varphi)$ to refer to the spherical variable. The integration measure with respect to $\mathring{\slashed g}$ is then written as
\begin{equation*}
    d\omega \doteq \sin\vartheta\,d\vartheta d\varphi.
\end{equation*}
We require the following standard Sobolev inequality:
 \begin{equation}\label{eq:angular-Sobolev}
     \sup_{S_{u,v}}|f|^2 \les \sum_{|\k|\le 2}\int_{S_{u,v}}| f^\k|^2\,d\omega\doteq \int_{S_{u,v}}|f^2|^2\,d\omega,
    \end{equation}
    which is again very suboptimal since our notation allows for applications of $T$ and $T^2$ on the right-hand side. 

Let $N\in\Bbb N$ and let $\phi:\mathcal M\to \Bbb R^N$. We define the notations $\slashed\nabla \phi$, $\phi^\k$, etc., by applying the above definitions to each of the scalar components $\phi_\alpha$, $\alpha\in\{1,\dotsc,N\}$, of $\phi$. Norms are then defined by using the standard inner product on $\Bbb R^N$, for instance,
\begin{equation*}
    |\slashed\nabla\phi|^2 \doteq \sum_{\alpha=1}^N |\slashed \nabla\phi_\alpha|^2.
\end{equation*}
We will also use the dot product notation $\cdot$ to indicate when we are taking an inner product with respect to this $\Bbb R^N$ structure, for instance,
\begin{equation*}
    \partial_u\phi\cdot \partial_v\phi \doteq \sum_{\alpha=1}^N \partial_u\phi_\alpha\partial_v\phi_\alpha.
\end{equation*}

\subsection{Semilinear systems satisfying the strong null condition}\label{sec:nonlinearity}

Let $\phi$ be a collection of $N$ real-valued scalar fields on $\mathcal M$. We consider systems of semilinear wave equations of the form 
\begin{equation}
    \Box_g\phi =\mathcal N(x,\phi,d\phi),\label{eq:wave-1}
\end{equation}
where $g$ is a spherically symmetric metric and $\mathcal N(x,\phi,d\phi)$ is an $\Bbb R^N$-valued smooth function of its variables. Our definition of null condition generalizes the standard metric null form example where $\phi$ is a scalar and \[\mathcal N=g^{-1}(d\phi,d\phi)=-4Y\phi\partial_v\phi + |\slashed\nabla\phi|^2, \]
and scalar equations with $\mathcal N= f(x,\phi) g^{-1}(d\phi,d\phi) $, which were considered in \cite{AAG20}.

\begin{defn}\label{def:null-cond}
    We say that $\mathcal N$ satisfies the \emph{strong null condition} if there exist numbers $r_1>r_0>M$ such that each component of $\mathcal N$ can be expressed as a sum of terms of the form
    \begin{equation}\label{eq:nlin-0}
     f(x,\phi)\phi_\alpha\phi_\beta,\quad f(x,\phi)\phi_\alpha X_1\phi_\beta ,\quad \text{or}\quad   f(x,\phi) X_1\phi_\alpha X_2\phi_\beta
    \end{equation}
    when $r_0\le r\le r_1$, where $X_1,X_2\in \{\partial_u,\partial_v,\Gamma_1,\Gamma_2,\Gamma_3\}$ and a sum of terms of the form
    \begin{equation}\label{eq:nlin-1}
     f(x,\phi)D^{-1}\partial_u\phi_\alpha\partial_v\phi_\beta
    \end{equation}
    or 
    \begin{equation}\label{eq:nlin-2}
       f(x,\phi)r^{-2}\Gamma_i\phi_\alpha\Gamma_j\phi_\beta,
    \end{equation} for $r<r_0$ and $r>r_1$.
 Here $f$ stands for any smooth function $f:\mathcal M\times \Bbb R^N\to \Bbb R$ such that 
    \begin{equation}\label{eq:f-assn}
        \sup_{\mathcal M\times\{|\phi|\les 1\}} |T^{k_1}\slashed\nabla_\omega^{k_2} D_\phi^{k_3} f| \les_{k_1,k_2,k_3} 1
    \end{equation}
    for any $k_1$, $k_2$, and $k_3$.
\end{defn}

\begin{rk}
    We emphasize that the null structure of $\mathcal N$ is only required near the horizon (i.e., for $r-M$ small) and near null infinity (i.e., for $r$ large). Without loss of generality, we may assume that $r_0<2M$.
\end{rk}

Since the vector fields $\Gamma_0,\dotsc,\Gamma_4$ are Killing, we obtain from \eqref{eq:wave-1} the equations
\begin{align}
\label{eq:wave-phi-uv}  \partial_u\partial_v\phi^\k  & =  r^{-1}D(\partial_v-\partial_u)\phi^\k  +D\slashed\Delta\phi^\k - D \mathcal N^\k,\\
 \label{eq:wave-lambda}  \partial_u(r\partial_v\phi^\k) &=  -D \partial_u\phi^\k +rD \slashed\Delta\phi^\k - rD \mathcal N^\k,\\
  \label{eq:wave-nu} \partial_v(r\partial_u\phi^\k) &=  D \partial_v\phi^\k + rD\slashed\Delta\phi^\k - rD \mathcal N^\k.
\end{align}
Defining $\psi\doteq r\phi$, we also have
\begin{equation}
  \label{eq:wave-psi}  \partial_u\partial_v\psi^{\mathbf k}= -DD'\phi^{\mathbf k}+D\slashed\Delta\psi^{\mathbf k}-rD\mathcal N^\k,
\end{equation} where $D'\doteq dD/dr$, since $\partial_u\partial_vr = -DD'$.

Given a vector field $X=X^u\partial_u+X^v\partial_v$, we derive from \eqref{eq:wave-phi-uv} the general multiplier identity
\begin{multline}\label{eq:main-energy-identity}
    \partial_v\big(r^2 X^u|\partial_u\phi^\k|^2\big) + \partial_u\big(r^2X^v|\partial_v\phi^\k|^2\big) = r^2\partial_v X^u|\partial_u\phi^\k|^2+r^2\partial_uX^v|\partial_v\phi^\k|^2 +2rD(X^u- X^v)\partial_u\phi^\k\cdot \partial_v\phi^\k\\
    + 2D X^u \mathring{\slashed\Delta}\phi^\k\cdot \partial_u\phi^\k +2D X^v \mathring{\slashed\Delta}\phi^\k \cdot \partial_v\phi^\k - r^2D (X^u\partial_u\phi^\k + X^v\partial_v\phi^\k)\cdot  \mathcal N^\k.
\end{multline}

\section{Precise statement of the main theorem}\label{sec:thm}

\subsection{Definitions and notation} 

\subsubsection{Foliations and regions} \label{sec:regions}

Recall the region $\mathcal D\doteq\{u\ge 0,v\ge 0\}$ in the domain of outer communication of extremal Reissner--Nordstr\"om and the timelike hypersurface $\Gamma\doteq\{r=\Lambda\}$ in $\mathcal D$, where we now choose $\Lambda\ge \max\{r_1,100M\}$ (recall $r_1$ from \cref{def:null-cond}). Given $\tau_f\ge 1$, we set
\begin{equation*}
    \mathcal D^{\tau_f} \doteq J^-(\Gamma(\tau_f)).
\end{equation*}
Let $\tau\mapsto(\Gamma^u(\tau),\Gamma^v(\tau))$ be the arc length parametrization of the projection of $\Gamma$ to the $(u,v)$-plane. We define four classes of null hypersurfaces in $\mathcal D^{\tau_f}$:
\begin{align*}
   C_u^{\tau_f}&\doteq \big(\{u\}\times[0,\Gamma^v(\tau)]\times S^2\big)\cap \{r\ge \Lambda \} ,&\underline C{}_v^{\tau_f}&\doteq\big([0,\Gamma^{u_{\tau_f}}(\tau_f)]\times\{v\}\times S^2\big)\cap\{r\le \Lambda \},\\
   H_u^{\tau_f}&\doteq \big(\{u\}\times[0,\Gamma^v(\tau)]\times S^2\big)\cap \{r\le \Lambda \},&\underline H{}_v^{\tau_f}&\doteq\big([0,\Gamma^{u_{\tau_f}}(\tau_f)]\times\{v\}\times S^2\big)\cap\{r\ge \Lambda \}.
\end{align*}
We can send $\tau_f\to \infty$, in which case we simply write $C_u^\infty=C_u$, etc. It is also convenient to write
\begin{equation*}
    C^{\tau_f}(\tau)\doteq C^{\tau_f}_{\Gamma^u(\tau)} ,\quad \underline C^{\tau_f}(\tau)\doteq \underline C{}_{\Gamma^v(\tau)}^{\tau_f}.
\end{equation*}

 Finally, we set
\begin{align*}
    \mathcal D_{\le\Lambda}&\doteq \{r\le\Lambda\}\cap\mathcal D,\\
     \mathcal D_{\ge\Lambda}&\doteq \{r\ge\Lambda\}\cap\mathcal D,\\
     \mathcal A&\doteq \{r\le r_{-1}\}\cap\mathcal D,\\
     \mathcal A'&\doteq \{r\le r_{-2}\}\cap\mathcal D,\\
     \mathcal B_{\rho_1,\rho_2}&\doteq \{\rho_1\le r\le\rho_2\}\cap\mathcal D,
\end{align*}
where $r_{-1}\doteq M+ (r_0-M)/2$, $r_{-2}\doteq M+ (r_{-1}-M)/2$, and $\rho_2>\rho_1>M$, as well as the obvious $\tau_f$-modified versions where we intersect with $\mathcal D^{\tau_f}$. 

 \begin{figure}
\centering{
\def\svgwidth{15pc}
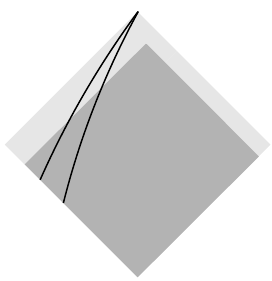}
\caption{A Penrose diagram showing the main regions, null hypersurfaces, and energies involved in our bootstrap argument. The dark shaded region is $\mathcal D^{\tau_f}$. The function $\tau$ measures advanced time to the left of $\Gamma$ and retarded time to the right of $\Gamma$.}
\label{fig:bootstrap-setup}
\end{figure}

\subsubsection{Definitions of energies}\label{def:energies}

Fix a parameter $\delta\in (0,\frac{1}{100})$, define $p_\star = p$ if $p<1$ and $p_\star =0$ if $p\ge 1$, and set $c_{p_\star}\doteq (p_\star-1)^2$. Given $k\in\Bbb N_0$, $\tau_f\in[1,\infty]$, $\tau,\tau_1,\tau_2\in[1,\tau_f]$, $p\in\{0\}\cup[\delta,2-\delta]$, $(u,v,\cdot)\in\mathcal D^{\tau_f}$, $\phi:\mathcal D^{\tau_f}\to\Bbb R^N$ with $\psi\doteq r\phi$, we define:
\begin{enumerate}
    \item The outgoing $r^p$-weighted flux to null infinity:
    \begin{equation*}
        \mathcal E_{p,k}^{\tau_f}(\tau)\doteq \int_{C^{\tau_f}(\tau)}\big(r^p|\partial_v\psi^k|^2+c_{p_\star}r^{p_\star+2}|\partial_v\phi^k|^2+r^2|\slashed\nabla\phi^k|^2+c_{p_\star}r^{p_\star}|\phi^k|^2\big)\,d\omega dv.
    \end{equation*}
    \item The ingoing $(r-M)^{-p}$-weighted flux to the horizon:
    \begin{equation*}
       \underline{ \mathcal E}{}_{p,k}^{\tau_f}(\tau)\doteq \int_{\underline C^{\tau_f}(\tau)}\big((r-M)^{-p}(|\partial_u\psi^k|^2+|\partial_u\phi^k|^2)+(r-M)^{2}|\slashed\nabla\phi^k|^2+c_{p_\star}(r-M)^{-p_\star+2}|\phi^k|^2\big)\,d\omega du.
    \end{equation*}
    \item The ingoing $r^p$-weighted flux near null infinity:
\begin{equation*}
    \underline{\mathcal F}{}_{p,k}^{\tau_f}(v,\tau_1,\tau_2)\doteq \int_{\underline H{}_v^{\tau_f}\cap\{\tau_1\le\tau\le\tau_2\}} \big(r^2|\partial_u\phi^k|^2+r^{p+2}|\slashed\nabla\phi^k|^2+r^p|\phi^k|^2\big)\, d\omega du.
\end{equation*}
    \item The outgoing $(r-M)^{-p}$-weighted flux near the horizon:
    \begin{equation*}
           \mathcal F_{p,k}^{\tau_f}(u,\tau_1,\tau_2)\doteq \int_{H^{\tau_f}_u\cap\{\tau_1\le\tau\le\tau_2\}} \big(|\partial_u\phi^k|^2+(r-M)^{-p+2}|\slashed\nabla\phi^k|^2+(r-M)^{-p+2}|\phi^k|^2\big) \,d\omega dv.
    \end{equation*}
    \item The integrated energy in the far region:
    \begin{equation*}
    \mathcal M^{\tau_f}_{p-1,k}(\tau)\doteq \mathcal M^{\tau_f}_{-1,k}(\tau)+\mathbf 1_{p\in[\delta,2-\delta]}\dot{\mathcal E}_{p-1,k}^{\tau_f}(\tau),
\end{equation*}
where
    \begin{equation*}
        \mathcal M^{\tau_f}_{-1,k}(\tau)\doteq \int_{C^{\tau_f}(\tau)} \big(r^{1-\delta}|\partial_u\phi^k|^2+r^{1-\delta}|\partial_v\phi^k|^2+r|\slashed\nabla\phi^k|^2+r^{-1-\delta}|\phi^k|^2\big)\,d\omega dv,
    \end{equation*}
    \begin{equation*}
        \dot{\mathcal E}^{\tau_f}_{p-1,k}(\tau)\doteq \int_{C^{\tau_f}(\tau)} \big(r^{p-1}|\partial_v\psi^k|^2+r^{p+1}|\partial_v\phi^k|^2+r^{p+1}|\slashed\nabla\phi^k|^2+r^{p-1}|\phi^k|^2\big)\,d\omega dv.
    \end{equation*}

    \item The integrated energy in the near region with degeneration at trapping: 
    \begin{equation*}
    \underline{\mathcal M}_{p-1,k}^{\tau_f}(\tau)\doteq   \underline{\mathcal M}_{-1,k}^{\tau_f}(\tau)+\mathbf 1_{p\in[\delta,2-\delta]}  \dot{\underline{\mathcal E}}{}_{p-1,k}^{\tau_f}(\tau),
\end{equation*}
where
\begin{multline*}
    \underline{\mathcal M}_{-1,k}^{\tau_f}(\tau)\doteq \int_{\underline C^{\tau_f}(\tau)\cap\mathcal A}\big((r-M)^{1+\delta}(|\partial_u\phi^k|^2+|\partial_v\phi^k|^2)+(r-M)^3|\slashed\nabla\phi^k|^2+(r-M)^4|\phi^k|^2\big)\,d\omega du \\ + \int_{\underline C^{\tau_f}(\tau)\cap\mathcal B_{r_{-1},3M}}\big(|R\phi^k|^2+|\phi^k|^2\big)\,d\omega du + \int_{\underline C^{\tau_f}(\tau)\cap\mathcal B_{3M,\Lambda}}\big(|\partial_u\phi^k|^2+|\partial_v\phi^k|^2+|\slashed\nabla\phi^k|^2+|\phi^k|^2\big)\,d\omega du,
\end{multline*}
\begin{equation*}
    \dot{\underline{\mathcal E}}{}_{p-1,k}^{\tau_f}(\tau)\doteq\int_{\underline C^{\tau_f}(\tau)\cap\mathcal A}\big((r-M)^{-p+1}(|\partial_u\psi^k|^2+|\partial_u\phi^k|^2)+(r-M)^{-p+3}|\slashed\nabla\phi^k|^2+(r-M)^{-p+3}|\phi^k|^2\big) d\omega du,
\end{equation*}
where $R\doteq \frac 12(\partial_v-\partial_u)$  is the $\partial_{r_*}$ vector field in the coordinates $(t,r_*,\vartheta,\varphi)$.

\item The integrated energy in the near region with trapping removed:
\begin{equation*}
     \widetilde{\underline{\mathcal M}}_{-1,k}^{\tau_f}(\tau)\doteq  \underline{\mathcal M}_{-1,k}^{\tau_f}(\tau)+\int_{\underline C^{\tau_f}(\tau)\cap\mathcal B_{r_{-1},3M}}\big(|\partial_u\phi^k|^2+|\partial_v\phi^k|^2+|\slashed\nabla\phi^k|^2\big)\,d\omega du.
\end{equation*}
\item The spacetime integral controlling the trapping region:
\begin{equation*}
    \mathcal Y_k^{\tau_f}(\tau_1,\tau_2)\doteq \int_{\tau_1}^{\tau_2}\widetilde{\underline{\mathcal M}}{}_{-1,k}^{\tau_f}(\tau)\,d\tau.
\end{equation*}
    \item The master energy with degeneration at trapping: 
    \begin{multline*}
     \mathcal X^{\tau_f}_{p,k}(\tau_1,\tau_2)\doteq \sup_{\tau\in[\tau_1,\tau_2]}\big(\mathcal E^{\tau_f}_{p,k}(\tau)+\underline{\mathcal E}{}^{\tau_f}_{p,k}(\tau)\big) + \sup_{v\ge 0}\underline{\mathcal F}{}_{p,k}^{\tau_f}(v,\tau_1,\tau_2)+\sup_{u\ge 0}\mathcal F^{\tau_f}_{p,k}(u,\tau_1,\tau_2)\\+\int_{\tau_1}^{\tau_2}\big(\mathcal M^{\tau_f}_{p-1,k}(\tau)+\underline{\mathcal M}{}^{\tau_f}_{p-1,k}(\tau)\big)\,d\tau.
\end{multline*}
\end{enumerate}
When $\tau_f=\infty$, we simply omit the superscript $\tau_f$ everywhere. 

\begin{rk}
We capture the degeneration of $T\phi^k$ and $\slashed\nabla\phi^k$ in the Morawetz estimate at the photon sphere $r=2M$ by simply omitting these terms from $\underline{\mathcal M}_{-1,k}^{\tau_f}$ in the integral over $\mathcal B_{r_{-1},3M}$. One could include the terms $(r-2M)^2|T\phi^k|^2$ and $(r-2M)^2|\slashed\nabla\phi^k|^2$ in the integral over  $\mathcal B_{r_{-1},3M}$ (see already \eqref{eq:Mor-intermediate-1}), but this is unnecessary for our purposes.
\end{rk}

\subsubsection{Local existence and the initial data norm}\label{sec:loc}

As mentioned in \cref{sec:ERN}, we pose characteristic initial data for the wave equation \eqref{eq:wave-1} on the bifurcate null hypersurface $\Sigma_0$, which was defined explicitly in \eqref{eq:Sigma-0} and corresponds to $\tau=1$.  

Let $\mathring\phi$ be a smooth function on $\Sigma_0$. Using the wave equation \eqref{eq:wave-1}, we can compute the jet of any smooth solution $\phi$ of \eqref{eq:wave-1} such that $\mathring\phi = \phi|_{\Sigma}$. Indeed, $T\Sigma_0$ is $(3+1)$-dimensional along the bifurcation sphere, and we may view the equation  \eqref{eq:wave-1} (see also \eqref{eq:wave-phi-uv}) as a transport equation for transverse derivatives to $\Sigma_0$, which we integrate away from the bifurcation sphere. Note that these transport equations are possibly nonlinear since we do not assume the null condition holds everywhere. For $n$ fixed, we say that $\mathring\phi$ is \emph{n-admissible} if this procedure indeed generates the $(n+1)$-jet of a solution $\phi$ to \eqref{eq:wave-1} along $\Sigma_0$. For $n$-admissible characteristic data, we define the initial data norm
\begin{multline}
    \|\mathring\phi\|_\star^2 \doteq \mathcal E_{2-\delta,n-2}[\phi](1)+\underline{\mathcal E}{}_{2-\delta,n-2}[\phi](1)+ \mathcal E_{1+\delta,n}[\phi](1)+\underline{\mathcal E}{}_{1+\delta,n}[\phi](1) \\+ \sup_{\underline C(1)}|(r-M)^{-2}\partial_u\psi^{n-6}|^2+\sup_{C(1)}|r^{3/2-\delta}\partial_v\psi^{n-6}|^2.
\end{multline}

\begin{rk}\label{rk:seed}
    It is possible to construct a norm $\|\mathring\phi\|_\flat$ that can be computed directly in terms of $\mathring\phi$ and such that if $\|\mathring\phi\|_\flat$ is small, then $\mathring\phi$ is $n$-admissible and $ \|\mathring\phi\|_\star\les\|\mathring\phi\|_\flat$. However, $\|\cdot\|_\flat$ will necessarily depend on strictly more than $n$ derivatives of $\mathring\phi$ because of the loss of angular derivatives when integrating transport equations to obtain the jet of $\phi$ from $\mathring\phi$. For an example of this procedure in a more complicated setting, see for instance \cite[Chapter 5]{DHRT}.
\end{rk}

By methods as in \cite[Section 7]{Luk-local-existence}, one can show that if $\mathring\phi$ is smooth $n$-admissible and $\|\mathring\phi\|_\star$ is sufficiently small, then there exists a smooth solution $\phi$ to \eqref{eq:wave-1} defined in a future neighborhood of $\Sigma_0$ such that $\phi|_{\Sigma_0}=\mathring\phi$. Note that smallness is essential since we do not assume that the null condition holds everywhere. See \cite[Section 7.2]{Luk-local-existence}.

\subsection{Statement of the main theorem}\label{sec:statement}

\begin{thm}\label{thm:main} Let $M>0$, $0<\delta<\frac{1}{100}$, and $n\ge 12$. For any nonlinearity $\mathcal N$ satisfying the hypotheses of \cref{def:null-cond} on the extemal Reissner--Nordstr\"om black hole of mass $M$, there exist $C>0$ and $\ve_\mathrm{stab}>0$, depending only on $M,\delta,n$, and $\mathcal N$, with the following property. 

Let $\mathring\phi$ be smooth $n$-admissible characteristic data for the wave equation \eqref{eq:wave-1} on the bifurcate null hypersurface $\Sigma_0$, satisfying the smallness assumption 
\begin{equation}\label{eq:smallness}
    \|\mathring\phi\|_\star\le \ve_0\le\ve_\mathrm{stab}.
\end{equation}
Then the solution to \eqref{eq:wave-1} with initial data $\mathring\phi$ exists on the domain of outer communication of the black hole, $\mathcal D$, and extends smoothly to the event horizon $\mathcal H^+$. Moreover, the solution satisfies the estimates
\begin{gather*} 
\mathcal Y_{n-3}(\tau,\infty)\le C\ve_0^2\tau^{-2+\delta},\\
     \mathcal X_{0,n-2}(\tau,\infty)\le C\ve_0^2\tau^{-2+\delta},\quad     \mathcal X_{2-\delta,n-2}(\tau,\infty)\le C\ve_0^2,\\
       \mathcal X_{0,n-1}(\tau,\infty)\le C\ve_0^2\tau^{-1},\quad     \mathcal X_{1+\delta,n-1}(\tau,\infty)\le C\ve_0^2, \\
          \mathcal X_{0,n}(\tau,\infty)\le C\ve_0^2,\quad     \mathcal E_{1+\delta,n}(\tau)+\underline{\mathcal E}{}_{1+\delta,n}(\tau)\le C\ve_0^2\tau^{4\delta}
\end{gather*}
for every $\tau\ge 1$.
\end{thm}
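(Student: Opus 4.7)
\medskip

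\noindent\textbf{Proof proposal.} The plan is to run a bootstrap/continuity argument in a bootstrap time $\tau_f\in[1,\infty)$, with smallness parameter $\ve\doteq A\ve_0$ where $A$ is a large constant to be fixed at the end. First, by the local existence theory referenced in \cref{sec:loc}, smallness of $\|\mathring\phi\|_\star$ produces a smooth solution in a future neighborhood of $\Sigma_0$, and the set of $\tau_f$ for which the bootstrap assumptions \eqref{eq:boot-intro-1}--\eqref{eq:boot-intro-6} hold on $\mathcal D^{\tau_f}$ is nonempty, closed, and connected by standard considerations. The main task is to improve each bootstrap assumption, so that the set is also open and hence equal to $[1,\infty)$.

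The skeleton of the improvement is the energy hierarchy \eqref{eq:intro-h-1}, derived by applying the multiplier identity \eqref{eq:main-energy-identity} with the standard $r^p$- and $(r-M)^{-p}$-multipliers (together with the $T$-energy and degenerate Morawetz multipliers), commuted $k$ times with $T,\Gamma_1,\Gamma_2,\Gamma_3$, and treating the nonlinearity as an inhomogeneity. I would first prove this hierarchy in an a priori form, then isolate the precise algebraic structure of the error terms $\mathbb T_k,\mathbb E_{p,k},\underline{\mathbb E}_{p,k}$ induced by the strong null condition \eqref{eq:nlin-0}--\eqref{eq:nlin-2}. The key observation is that at the horizon every quadratic term contains a factor $\partial_v\phi$ or $\slashed\nabla\phi$, and at infinity every quadratic term contains a factor $\partial_u\phi$ or $r^{-1}\Gamma_i\phi$; this is what allows the resulting errors to be absorbed by the hierarchies \eqref{eq:boundedness-intro-1}--\eqref{eq:r-Mp-intro} without needing to commute with $Y$ or $r^2\partial_v$.

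The next step, which is where the main conceptual work lies, is to estimate the error terms using only the bootstrap hypotheses. I would split the lower-order factor $\phi^{n-6}$ off using $L^\infty$ in one null direction and control it in $L^1$ or $L^2$ in the other, as in \eqref{eq:L1-Linfty-intro}--\eqref{eq:L2-Linfty-intro}. The $L^2_v L^\infty_{u,\omega}$ bound for $\partial_v\phi^{n-6}$ (and its $(r-M)$-weighted analog for $\partial_u\phi^{n-6}$) comes from integrating the commuted equation \eqref{eq:wave-psi} along the $u$-direction together with \eqref{eq:angular-Sobolev}, after which Cauchy--Schwarz on dyadic time intervals gives the $L^1_v L^\infty$ version. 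Meanwhile, pointwise bounds for $\partial_u\phi^{n-6}$ come from the method of characteristics applied to \eqref{eq:wave-nu}, propagating $(r-M)^{-3/2+\delta}|\partial_u\phi^{n-6}|\lesssim\ve^{1/2}$ from data; interpolating with the growing top-order estimate \eqref{eq:intro-growth} gives the family of $(r-M)^{-q}$ weights needed for different $(p,k)$. In the medium-$r$ region I would commute one extra time to remove the trapping degeneration, which is precisely why \eqref{eq:boot-intro-6} is closed by the $(n-2)$-level degenerate estimate through \eqref{eq:boot-intro-3}.

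The hardest step will be closing the topmost $(r-M)^{-p}$ estimate \eqref{eq:boot-intro-5} at $p=1+\delta,k=n$, since here the $T$-energy error at the horizon (the term flagged in the remark after \eqref{eq:L2-Linfty-intro}) is anomalous and forces the mild growth $\tau_2^{4\delta}$, and one must use the interpolated pointwise bound at a value of $q$ chosen exactly so that the $(r-M)$-weights balance in \eqref{eq:intro-error-2}. Once all of \eqref{eq:boot-intro-1}--\eqref{eq:boot-intro-6} are improved from $\ve^2$ to $\tfrac12\ve^2$ (and the pointwise bounds from $\ve^{1/2}$ to $\ve$) via the Dafermos--Rodnianski pigeonhole in the $p$-hierarchy, a standard continuity argument extends the solution to all of $\mathcal D$ with the claimed estimates; smoothness across $\mathcal H^+$ follows since our norms control $\partial_u\phi$ in the regular coordinate $\hat u$ after multiplying by $D$, and higher-order regularity follows from propagating analogous bounds for $k>n$ once global existence is in hand.
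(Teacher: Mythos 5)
Your proposal follows the same route as the paper: a bootstrap argument built on the $T$-energy, Morawetz, $(r-M)^{-p}$, and $r^p$ hierarchies for the commuted equation, closed via $L^1L^\infty$/$L^2L^\infty$ estimates for the low-order factors, method-of-characteristics pointwise bounds with the $(r-M)^{-q}$ interpolation, extra angular commutation near trapping, and the Dafermos--Rodnianski pigeonhole, with the anomalous top-order estimate at $p=1+\delta$, $k=n$ forcing the $\tau^{4\delta}$ growth. The only slips are cosmetic: the paper integrates the equation for $\partial_u\psi=\partial_u(r\phi)$ (i.e.\ \eqref{eq:wave-psi}) rather than \eqref{eq:wave-nu}, and the regular transverse derivative is obtained from $\partial_u\phi$ by \emph{dividing} by $D$, not multiplying.
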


\section{A priori energy estimates}\label{sec:a-priori}

In this section, we derive the fundamental hierarchies of energy estimates for solutions of the nonlinear wave equation \eqref{eq:wave-1}, where we treat the nonlinearity $\mathcal N$ as an inhomogeneity. In \cref{sec:conventions}, we define the spacetime region and null hypersurfaces involved in our energy estimates. In \cref{sec:Hardy}, we summarize some relevant Hardy inequalities. In \cref{sec:T-energy}, we use the multiplier $T$ to derive the basic degenerate energy estimate. In \cref{sec:Morawetz}, we prove Morawetz estimates for $\phi$, i.e., weighted spacetime $L^2$ bounds for $\phi$ and its derivatives. In \cref{sec:horizon-hierarchy}, we prove $(r-M)^{-p}$-weighted energy estimates for $\partial_u\psi$. Finally, in  \cref{sec:rp-hierarchy}, we prove $r^p$-weighted energy estimates for $\partial_v\psi$ in the far region.

\subsection{Conventions and notation}\label{sec:conventions}

The energy estimates in this section will take place on regions $\mathcal R$ in the domain of outer communication of extremal Reissner--Nordstr\"om which is defined as follows: Recall the timelike hypersurface $\Gamma\doteq\{r=\Lambda\}$. Let $1\le\tau_1\le\tau_2$ and consider $(u_1,v_1)$ and $(u_2,v_2)$ defined by $\tau(u_1,v_1)=\tau_1$ and $\tau(u_2,v_2)=\tau_2$. Given $u_2'>u_2$ and $v_2'>v_2$, we now set 
\begin{equation*}
    \mathcal R\doteq \big([u_1,u_2]\times[v_1,v_2']\cup [u_1,u_2']\times[v_1,v_2]\big)\times S^2
\end{equation*}
and 
\begin{equation*}
    \mathcal R_{\le\Lambda}\doteq\mathcal R\cap\{r\le\Lambda\},\quad  \mathcal R_{\ge\Lambda}\doteq\mathcal R\cap\{r\ge\Lambda\}.
\end{equation*}
The boundary $\partial \mathcal R$ consists of null hypersurfaces which are numbered I--VI as depicted in \cref{fig:butterfly} below. 

To avoid cluttering equations, we will omit the measures in most of the integrals in the remainder of the paper. Integrals along outgoing cones ($C_u$ or $H_u$) are always taken with respect to $d\omega dv$ and integrals along ingoing cones ($\underline C{}_v$ or $\underline H{}_v$) are always taken with respect to $d\omega du$. Spacetime volume integrals, which are written as $\iint$, are always taken with respect to $d\omega du dv$.

 \begin{figure}[h]
\centering{
\def\svgwidth{12pc}
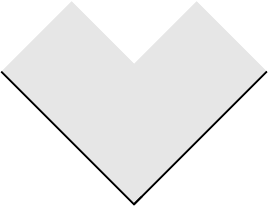}
\caption{A Penrose diagram depicting the region $\mathcal R$ and the hypersurfaces I--VI used in the energy estimates in this section.}
\label{fig:butterfly}
\end{figure}

\subsection{Hardy inequalities}\label{sec:Hardy}

To bound lower order terms in our estimates, we require the following Hardy inequalities. They can be proved by using Lemmas 6.5 and 6.20 in \cite{AKU24} with $\alpha = p-2$ and $f=\chi\psi^k$ for appropriate choices of cutoff functions $\chi$, integrating over $\omega\in S^2$, and also using the trivial estimates $|\partial_u\phi^k|\les r^{-1}|\partial_u\psi^k|+r^{-1}D|\phi^k|$ and $|\partial_v\phi^k|\les r^{-1}|\partial_v\psi^k|+r^{-1}D|\phi^k|$ .

\begin{lem}[$(r-M)^{-p}$-weighted Hardy inequalities]\label{lem:hardy-near}
Let $\mathcal R$ be a region as depicted in \cref{fig:butterfly} and let $k\in\mathbb N_0$. For any $p<1$ and $\tau\in[\tau_1,\tau_2]$, it holds that
\begin{equation*}
   (p-1)^2 \int_{\underline C(\tau)\cap\mathcal R\cap\mathcal A} (r-M)^{-p+2}|\phi^k|^2 \les  \int_{\underline C(\tau)\cap\mathcal R\cap\mathcal A} (r-M)^{-p}|\partial_v\psi^k|^2\ + \int_{\underline C(\tau)\cap\mathcal R\cap(\mathcal A\setminus\mathcal A')}|\phi^k|^2,
\end{equation*}
\begin{equation*}
    \int_{\underline C(\tau)\cap\mathcal R\cap\mathcal A}(r-M)^{-p}|\partial_u\phi^k|^2 \les  \int_{\underline C(\tau)\cap\mathcal R\cap\mathcal A} \big((r-M)^{-p}|\partial_u\psi^k|^2+(r-M)^{-p+2}|\phi^k|^2\big),
\end{equation*} where the regions $\mathcal A$ and $\mathcal A'$ were defined in \cref{sec:regions},
and for any $p<2$, it holds that
\begin{equation*}
   (p-2)^2 \iint_{\mathcal R\cap\mathcal A}(r-M)^{-p+3}|\phi^k|^2\les\iint_{\mathcal R\cap\mathcal A} (r-M)^{-p+1}|\partial_v\psi^k|^2 + \int_{\mathcal R\cap(\mathcal A\setminus\mathcal A')}|\phi^k|^2
\end{equation*}
\begin{equation*}
    \iint_{\mathcal R\cap\mathcal A} (r-M)^{-p+1}|\partial_u\phi^k|^2\les  \int_{\mathcal R\cap\mathcal A} \big((r-M)^{-p+1}|\partial_u\psi^k|^2+(r-M)^{-p+3}|\phi^k|^2\big).
\end{equation*}
\end{lem}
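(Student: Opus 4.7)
All four inequalities reduce to a single one-dimensional Hardy inequality (\cite[Lemmas 6.5 and 6.20]{AKU24}) applied fiberwise, exactly as the authors indicate in the statement.

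First I would fix a smooth cutoff $\chi=\chi(r)$ equal to $1$ on $\mathcal A'=\{r\le r_{-2}\}$ and supported in $\{r\le r_{-1}\}$, so that $\chi'$ is supported in $\mathcal A\setminus\mathcal A'$. On each fixed angular slice, I would apply the cited one-dimensional Hardy inequality with exponent $\alpha=p-2$ to $f=\chi\psi^k$, regarded as a function of $r$ on $(M,r_{-1}]$. For $p<1$ the weight $(r-M)^{-p}$ is integrable at the horizon, so no horizon boundary contribution appears, and the Hardy constant is $(\alpha+1)^2=(p-1)^2$, exactly matching the prefactor in the first two claims. The spacetime inequalities are handled analogously by additionally integrating in $v$ (equivalently, in $\tau$), with $\alpha=p-3$ producing the prefactor $(p-2)^2$.

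The second step is to translate between the purely radial Hardy formulation and the stated null-hypersurface or spacetime integrals. On an ingoing cone $\underline C(\tau)$, one has $du=-D^{-1}dr$ and $\partial_r=-D^{-1}\partial_u$; combined with $D\sim (r-M)^2$ and the pointwise bound $|\phi^k|\les r^{-1}|\psi^k|$, this turns $(r-M)^{-p+2}|\phi^k|^2\,d\omega du$ into a radial integrand $(r-M)^{-p}|\psi^k|^2\,d\omega dr$, and correspondingly $(r-M)^{-p}|\partial_u\psi^k|^2\,d\omega du$ into $(r-M)^{-p+2}|\partial_r\psi^k|^2\,d\omega dr$, so that the Hardy estimate applies directly. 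The same computation works on outgoing cones with $(\partial_u,du)$ replaced by $(\partial_v,dv)$, and for the spacetime versions one additionally integrates in the complementary null variable. The lower-order correction supported on $\mathcal A\setminus\mathcal A'$ in each claim arises from expanding $\partial_r(\chi\psi^k)=\chi'\psi^k+\chi\partial_r\psi^k$ via the product rule, combined with the trivial bound $|\psi^k|\les |\phi^k|$ on the bounded collar $\mathcal A\setminus\mathcal A'$.

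Finally, the versions involving $|\partial_u\phi^k|^2$ in place of $|\partial_u\psi^k|^2$ follow from the algebraic identity $r\partial_u\phi^k=\partial_u\psi^k+D\phi^k$, obtained by differentiating $\psi^k=r\phi^k$ and using $\partial_u r=-D$. This yields $|\partial_u\phi^k|^2\les r^{-2}|\partial_u\psi^k|^2+r^{-2}D^2|\phi^k|^2$, and since $D^2\les (r-M)^4$ the second contribution is dominated by the $(r-M)^{-p+2}|\phi^k|^2$ term already present on the right-hand side (or alternatively closed by the $\phi^k$ Hardy estimate from the first step). The main routine-but-delicate point will be bookkeeping the Hardy constants $(p-1)^2$ and $(p-2)^2$ through these reductions: they arise as $(\alpha+1)^2$ for $\alpha=-p$ and $\alpha=-p+1$ respectively, and the inequalities genuinely degenerate at the critical exponents $p=1$ and $p=2$, which is exactly why these endpoints are excluded from the hypotheses.
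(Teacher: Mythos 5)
Your proposal follows the paper's own (terse) proof, which simply cites Lemmas 6.5 and 6.20 of \cite{AKU24} with $\alpha=p-2$ and $f=\chi\psi^k$, integrates over $S^2$, and uses the algebraic bounds $|\partial_u\phi^k|\les r^{-1}|\partial_u\psi^k|+r^{-1}D|\phi^k|$, $|\partial_v\phi^k|\les r^{-1}|\partial_v\psi^k|+r^{-1}D|\phi^k|$; you have filled in the routine cutoff and coordinate bookkeeping along the same lines. The one quibble is an internal inconsistency in your exponent tracking: your first paragraph takes $\alpha=p-2$ (resp.\ $p-3$), while your closing paragraph takes $\alpha=-p$ (resp.\ $-p+1$); since $p-2$ and $-p$ are reflections of each other about $-1$, both give $(\alpha+1)^2=(p-1)^2$ (resp.\ $(p-2)^2$), so the claimed prefactors and the degeneration at $p=1$, $p=2$ are still correct, but you should settle on one normalization and match it to the precise form of the cited lemmas.
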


\begin{lem}[$r^p$-weighted Hardy inequalities]\label{lem:hardy-far} Let $\mathcal R$ be a region as depicted in \cref{fig:butterfly} and let $k\in\mathbb N_0$. For any $p<1$ and $\tau\in[\tau_1,\tau_2]$, it holds that
\begin{equation*}
   (p-1)^2 \int_{C(\tau)\cap\mathcal R} r^{p}|\phi^k|^2\,d\omega dv \les  \int_{C(\tau)\cap\mathcal R} r^{p}|\partial_v\psi^k|^2+ \int_{C(\tau)\cap\mathcal R\cap\{r\le2\Lambda\}}|\phi^k|^2,
\end{equation*}
\begin{equation*}
    \int_{C(\tau)\cap\mathcal R} r^{p+2}|\partial_v\phi^k|^2\les  \int_{C(\tau)\cap\mathcal R} r^{p}\big(|\partial_v\psi^k|^2+|\phi^k|^2\big),
\end{equation*}
and for any $p<2$, it holds that
\begin{equation*}
   (p-2)^2 \iint_{\mathcal R_{\ge\Lambda}} r^{p-1}|\phi^k|^2 \les \iint_{\mathcal R_{\ge\Lambda}} r^{p-1}|\partial_v\psi^k|^2 + \int_{\mathcal R\cap\{\Lambda\le r\le2\Lambda\}}|\phi^k|^2,
\end{equation*}
\begin{equation*}
    \iint_{\mathcal R_{\ge\Lambda}} r^{p+1}|\partial_v\phi^k|^2 \les  \int_{\mathcal R_{\ge\Lambda}} r^{p-1}\big(|\partial_v\psi^k|^2+|\phi^k|^2\big).
\end{equation*}
\end{lem}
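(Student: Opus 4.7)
The plan is to follow exactly the strategy explicitly outlined for the near-horizon version in \cref{lem:hardy-near}: reduce each estimate to a one-dimensional weighted Hardy inequality from \cite[Lemmas 6.5 and 6.20]{AKU24} applied with $\alpha = p-2$ to $f = \chi\psi^k$, where $\chi$ is a smooth cutoff supported in $\{r\ge\Lambda\}$ and equal to $1$ on $\{r\ge 2\Lambda\}$, and then integrate in $\omega\in S^2$. The reason the cases $p<1$ and $p<2$ split is that the sharp constant in the corresponding Hardy inequality is $(p-1)^2$ for the cone (flux) version and $(p-2)^2$ for the bulk version, so one needs $p\ne 1$ and $p\ne 2$ respectively; the strict inequalities $p<1$ and $p<2$ additionally ensure the integrability needed for the Hardy argument at $r=\infty$.

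For the first inequality, on each outgoing cone $C(\tau)\cap\mathcal R$ the change of variables $v\mapsto r$ is bi-Lipschitz since $\partial_v r = D \sim 1$ in $\{r\ge\Lambda\}$, so the one-dimensional Hardy inequality along the cone yields $(p-1)^2\int r^{p-2}|\chi\psi^k|^2\,dr \les \int r^p|\partial_r(\chi\psi^k)|^2\,dr$ for $p<1$. Using $r^{-2}|\psi^k|^2 = |\phi^k|^2$, expanding the derivative, and controlling the cutoff commutator by the trivial bound on the annulus $\{\Lambda\le r\le 2\Lambda\}$, then integrating in $\omega$, gives the first estimate. The second estimate does not use Hardy at all: the chain rule identity $\partial_v\phi^k = r^{-1}\partial_v\psi^k - r^{-1}D\phi^k$ already furnishes the pointwise bound $r^{p+2}|\partial_v\phi^k|^2 \les r^p|\partial_v\psi^k|^2 + r^p D^2|\phi^k|^2$, which integrates directly over $C(\tau)\cap\mathcal R$ to give the claim.

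For the third estimate, I would apply the spacetime Hardy inequality of \cite[Lemma 6.20]{AKU24} with the same choice $\alpha = p-2$ to $\chi\psi^k$ on $\mathcal R_{\ge\Lambda}$, changing variables from $(u,v)$ to $(u,r)$ via $\partial_v r = D \sim 1$. The normalizing constant is now $(p-2)^2$ (requiring $p<2$), and the cutoff commutator localizes the resulting error to $\mathcal R\cap\{\Lambda\le r\le 2\Lambda\}$, where $r$ is bounded and no weight is required, producing precisely the claimed lower-order term. The fourth estimate follows from the third together with the pointwise identity for $\partial_v\phi^k$ used above. The main bookkeeping point is the handling of the boundary contributions: choosing $\chi$ to vanish identically on $\{r\le\Lambda\}$ eliminates the inner boundary term arising in the integration by parts underlying Hardy, while the outer boundary at $r=\infty$ contributes nothing because the right-hand sides of the desired inequalities are presumed finite (so the Hardy step can be justified as a limiting argument on truncated regions). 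This is the same bookkeeping issue that appears in \cref{lem:hardy-near} and is handled in exactly the same way.
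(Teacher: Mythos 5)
Your proposal matches the paper's proof essentially line-for-line: the paper reduces both Hardy lemmas to \cite[Lemmas 6.5 and 6.20]{AKU24} with $\alpha=p-2$ applied to $f=\chi\psi^k$ for an appropriate cutoff, integrates over $S^2$, and handles the $\partial_v\phi^k$ estimates via the trivial identity $|\partial_v\phi^k|\les r^{-1}|\partial_v\psi^k|+r^{-1}D|\phi^k|$. Your additional remarks about the boundary terms and the role of the strict inequalities $p<1$, $p<2$ are correct and consistent with what the cited lemmas encode.
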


\subsection{The \texorpdfstring{$T$}{T}-energy estimate}\label{sec:T-energy}

We now prove the basic degenerate energy estimate for \eqref{eq:wave-1}. We use the multiplier vector field $T$, the time-translation Killing field, in the general multiplier identity \eqref{eq:main-energy-identity}. As in \cite{AKU24}, we also use a null Lagrangian current (analogous to $\varpi$ in \cite{DHRT22,Morawetz-note}) to add a zeroth order term to the estimate.

\begin{prop}[The $T$-energy estimate]\label{prop:T-energy-est}   Let $\phi$ be a solution of the wave equation \eqref{eq:wave-1} on the region $\mathcal R$ depicted in \cref{fig:butterfly}. For any $k\in\Bbb N_0$, it holds that
  \begin{multline}\label{eq:T-energy-est}
\int_\mathrm{III}\big(r^2|\partial_u\phi^k|^2+r^2|\slashed\nabla\phi^k|^2+|\phi^k|^2\big) +\int_\mathrm{IV}\big(r^2|\partial_v\phi^k|^2+r^2|\slashed\nabla\phi^k|^2+|\phi^k|^2\big) \\ +\int_\mathrm{V}\big(|\partial_u\phi^k|^2+D|\slashed\nabla\phi^k|^2+D|\phi^k|^2\big)+\int_\mathrm{VI}\big(|\partial_v\phi^k|^2+D|\slashed\nabla\phi^k|^2+D|\phi^k|^2\big) \\
     \les \int_\mathrm{I}\big(|\partial_u\phi^k|^2+D|\slashed\nabla\phi^k|^2+D|\phi^k|^2\big) + \int_\mathrm{II}\big(r^2|\partial_v\phi^k|^2+r^2|\slashed\nabla\phi^k|^2+|\phi^k|^2\big) + \mathbb E_{T,k}(\mathcal R),
 \end{multline}
 where
\begin{equation*}
    \mathbb E_{T,k}(\mathcal R)\doteq \iint_{\mathcal R}Dr^2\big(|\partial_u\phi^k|+|\partial_v\phi^k|\big)|\Gamma^k\mathcal N|.
\end{equation*}
\end{prop}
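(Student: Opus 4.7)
My plan is to apply the general multiplier identity \eqref{eq:main-energy-identity} with the Killing multiplier $X = T = \tfrac12(\partial_u+\partial_v)$, combine it with a null Lagrangian current to produce zeroth-order boundary contributions, and then integrate over $\mathcal R$. Because $T$ is Killing, $X^u = X^v = \tfrac12$ are constants, so the bulk terms on the right-hand side of \eqref{eq:main-energy-identity} involving $\partial_v X^u$, $\partial_u X^v$, and $X^u-X^v$ all vanish. What remains is the angular contribution $\tfrac{D}{2}(\partial_u\phi^k+\partial_v\phi^k)\cdot\mathring{\slashed\Delta}\phi^k$ and the nonlinearity $-\tfrac{r^2D}{2}(\partial_u\phi^k + \partial_v\phi^k)\cdot\mathcal N^k$.

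After integrating over $\omega\in S^2$, I would integrate by parts the angular Laplacian terms using $\int_{S^2}\mathring{\slashed\Delta}\phi^k\cdot\partial_u\phi^k\,d\omega = -\tfrac12\partial_u\int_{S^2}|\mathring{\slashed\nabla}\phi^k|^2\,d\omega$ and the analogous identity in $v$. Crucially, since $\partial_u r = -D$ and $\partial_v r = D$, one has $\partial_u D + \partial_v D = 0$, so the two angular terms combine into the perfect divergence $-\tfrac12\partial_v\!\big(\tfrac{Dr^2}{2}|\slashed\nabla\phi^k|^2\big) - \tfrac12\partial_u\!\big(\tfrac{Dr^2}{2}|\slashed\nabla\phi^k|^2\big)$. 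Moving these to the left-hand side yields, after integration over $S^2$,
\begin{equation*}
\partial_v F_v + \partial_u F_u = -\tfrac{r^2 D}{2}(\partial_u\phi^k+\partial_v\phi^k)\cdot\mathcal N^k,
\end{equation*}
where $F_v = \tfrac{r^2}{2}|\partial_u\phi^k|^2 + \tfrac{Dr^2}{4}|\slashed\nabla\phi^k|^2$ and $F_u$ is obtained by exchanging $u\leftrightarrow v$.

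To produce the required zeroth-order flux terms, I would add a small constant multiple $\epsilon$ of the identically vanishing null current $\partial_v[\partial_u(h|\phi^k|^2)] - \partial_u[\partial_v(h|\phi^k|^2)] = 0$ for a carefully chosen radial weight $h = h(r)$ (this is the analogue of the $\varpi$ current in \cite{DHRT22} and \cite{Morawetz-note}). Since $h$ depends only on $r$, the net effect is to modify $F_v$ by $\epsilon(-h'D|\phi^k|^2 + 2h\phi^k\cdot\partial_u\phi^k)$ and $F_u$ by $\epsilon(-h'D|\phi^k|^2 - 2h\phi^k\cdot\partial_v\phi^k)$ while leaving the bulk untouched. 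I would choose $h$ to smoothly interpolate between $h(r) = -(r-M)$ on $\{r \le r_{-1}\}$ and $h(r) = -r_*(r)$ on $\{r\ge 2\Lambda\}$; then $-h'D = D$ in the near region and $-h'D = 1$ in the far region, while $h^2 \les D$ near the horizon and $h^2 \les r^2$ near null infinity. For $\epsilon$ sufficiently small, a Young inequality absorbs the cross terms $\pm 2\epsilon h\phi^k\cdot\partial\phi^k$ into the existing $|\partial\phi^k|^2$ flux integrands and into the freshly produced $|\phi^k|^2$ terms, yielding precisely the weighted zeroth-order contributions $D|\phi^k|^2$ on the hypersurfaces lying in $\{r \le \Lambda\}$ (the pieces I, V, VI) and $|\phi^k|^2$ on those in $\{r \ge \Lambda\}$ (the pieces II, III, IV).

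Finally, I would integrate the resulting divergence identity over $\mathcal R$ via the fundamental theorem of calculus in each null coordinate. The top and right boundary pieces (III, IV, V, VI) contribute with positive sign and furnish the left-hand side of \eqref{eq:T-energy-est}; the bottom and left pieces (I, II) contribute with negative sign and become the right-hand side. The nonlinear bulk integral is controlled by $\mathbb E_{T,k}(\mathcal R)$ directly from its definition using $|\partial_u\phi^k + \partial_v\phi^k| \les |\partial_u\phi^k| + |\partial_v\phi^k|$. The main obstacle I anticipate is the construction of the interpolating weight $h(r)$: it must produce $D$ (rather than $1$) as the natural weight on $|\phi^k|^2$ near the horizon --- this is what makes the basic $T$-energy consistent with the vanishing of $D$ there --- and the matching region between the two asymptotic regimes must be handled so that cross-terms absorb cleanly and no new bulk is introduced.
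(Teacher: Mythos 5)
Your proposal is correct and follows the paper's route exactly: multiplier $T$ in \eqref{eq:main-energy-identity}, observe that the first three bulk terms vanish because $T$ is Killing, integrate the $\mathring{\slashed\Delta}$-terms by parts and use $\partial_u D + \partial_v D = 0$ to cancel the resulting bulks, and add a small multiple of a null Lagrangian current to manufacture the zeroth-order fluxes. The one genuine divergence from the paper is the choice of radial weight $h$: where you propose to interpolate between $-(r-M)$ near the horizon and $-r_*$ far out, the paper simply uses $h = r-M$ \emph{everywhere}. The anticipated "obstacle" of constructing an interpolating weight does not actually arise: with $h = r-M$ one computes $-\partial_u\big((r-M)|\phi^k|^2\big) = D|\phi^k|^2 - 2(r-M)\phi^k\cdot\partial_u\phi^k$ and $\partial_v\big((r-M)|\phi^k|^2\big) = D|\phi^k|^2 + 2(r-M)\phi^k\cdot\partial_v\phi^k$, so the good zeroth-order contribution is $D|\phi^k|^2$ on \emph{every} boundary piece. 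In the far region this is already comparable to $|\phi^k|^2$ because $D$ is bounded above and below away from the horizon, so there is nothing to interpolate. The Young absorption is identical to what you describe (using $(r-M)^2/D = r^2$). Your construction would also work, but it is more elaborate than necessary and introduces a matching region you would have to track.
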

\begin{proof}
Let $|\k|\le k$. We apply the multiplier identity \eqref{eq:main-energy-identity} to the Killing field $T=\frac 12(\partial_u+\partial_v)$ and integrate over $\mathcal R$ (with respect to $d\omega dudv$). For this vector field, the first three terms on the right-hand side of \eqref{eq:main-energy-identity} vanish and the final term gives rise to $\mathbb E_{T,k}(\mathcal R)$. Next we consider the term containing $\mathring{\slashed\Delta}$ and $\partial_u$. We integrate by parts on the sphere and then in $u$ to obtain the identity
\begin{multline*}
    \iint_{\mathcal R}DT^u\mathring{\slashed\Delta}\phi^\k\cdot\partial_u\phi\,d\omega du dv = -\iint_{\mathcal R}D \partial_u |\slashed\nabla\phi^\k|^2_\circ \,d\omega du dv \\ 
     = \iint_{\mathcal R}r^2\partial_uD|\slashed\nabla\phi^\k|^2 \,d\omega du dv - \int_\mathrm{VI}r^2D |\slashed\nabla \phi^\k|^2 \,d\omega dv+\int_\mathrm{II} r^2D |\slashed\nabla \phi^\k|^2 \,d\omega dv.
\end{multline*}
Similarly, for the term containing $\mathring{\slashed\Delta}$ and $\partial_v$, we obtain
\begin{multline*}
    \iint_{\mathcal R}D T^v \mathring{\slashed\Delta}\phi^\k\cdot\partial_v\phi\,d\omega du dv = -\iint_{\mathcal R}D \partial_v |\slashed\nabla\phi^\k|^2_\circ \,d\omega du dv \\ 
     = \iint_{\mathcal R} r^2\partial_vD|\slashed\nabla\phi^\k|^2 \,d\omega du dv - \int_\mathrm{VI}r^2D|\slashed\nabla \phi^\k|^2 \,d\omega du+\int_\mathrm{II} r^2D |\slashed\nabla \phi^\k|^2 \,d\omega du.
\end{multline*}
Since $\partial_vD=-\partial_uD$, the bulk terms cancel out and we arrive at 
 \begin{multline}\label{eq:T-energy-est-2}
\int_\mathrm{III}\big(r^2|\partial_u\phi^k|^2+r^2|\slashed\nabla\phi^k|^2\big) +\int_\mathrm{IV}\big(r^2|\partial_v\phi^k|^2+r^2|\slashed\nabla\phi^k|^2\big) +\int_\mathrm{V}\big(|\partial_u\phi^k|^2+D|\slashed\nabla\phi^k|^2\big) +\int_\mathrm{VI}\big(|\partial_v\phi^k|^2+D|\slashed\nabla\phi^k|^2\big)
  \\   \les \int_\mathrm{I}\big(|\partial_u\phi^k|^2+D|\slashed\nabla\phi^k|^2\big) + \int_\mathrm{II}\big(r^2|\partial_v\phi^k|^2+r^2|\slashed\nabla\phi^k|^2\big) + \mathbb E_{T,k}(\mathcal R).
 \end{multline}

To estimate the zeroth order term, we integrate the trivial identity
    \begin{equation*}
        \partial_u\big(\partial_v(( r-M)|\phi^\k|^2)\big)+\partial_v\big({-}\partial_u(( r-M)|\phi^\k|^2)\big)=0
    \end{equation*}
    over $\mathcal R$ and use the following estimates (which are easily derived using Young's inequality)
\begin{equation*}
 - r^2|\partial_u\phi^\k|^2 +D |\phi^\k|^2   \les -\partial_u\big(( r-M)|\phi^\k|^2\big)\les r^2|\partial_u\phi^\k|^2 +D |\phi^\k|^2,
\end{equation*}
\begin{equation*}
   - r^2|\partial_v\phi^\k|^2 +D |\phi^\k|^2   \les \partial_v\big(( r-M)|\phi^\k|^2\big)\les r^2|\partial_v\phi^\k|^2 +D |\phi^\k|^2,
\end{equation*}
to obtain
 \begin{multline}\label{eq:null-lagrangian}
\int_\mathrm{III}\big({-}r^2|\partial_u\phi^\k|^2+|\phi^\k|^2\big)\,d\omega du +\int_\mathrm{IV}\big({-}r^2|\partial_v\phi^\k|^2+|\phi^\k|^2\big)d\omega dv  
     +\int_\mathrm{V}\big({-}|\partial_u\phi^\k|^2+D|\phi^\k|^2\big)\, d\omega du\\ +\int_\mathrm{VI}\big({-}|\partial_v\phi^\k|^2+D|\phi^\k|^2\big)\, d\omega dv 
     \les \int_\mathrm{I}\big(|\partial_u\phi^\k|^2+D|\phi^\k|^2\big)\,d\omega du + \int_\mathrm{II}\big(r^2|\partial_v\phi^\k|^2+|\phi^\k|^2\big)d\omega dv.
 \end{multline}
  The estimate \eqref{eq:T-energy-est} now follows from adding a small multiple of \eqref{eq:null-lagrangian} to \eqref{eq:T-energy-est-2} and summing over $\k$.
\end{proof}

\subsection{Integrated local energy decay}\label{sec:Morawetz}
\subsubsection{The Morawetz estimate}

In this section, we prove the following \emph{Morawetz estimate} with degeneration at the photon sphere $r=2M$ and at the event horizon $r=M$.

\begin{prop}[The Morawetz estimate]\label{prop:Morawetz} Let $\phi$ be a solution of the wave equation \eqref{eq:wave-1} on the region $\mathcal R$ depicted in \cref{fig:butterfly}. For any $k\in\Bbb N_0$, it holds that
\begin{multline}\label{eq:Morawetz}
  \iint_{\mathcal R\cap \mathcal A}\big((r-M)^{1+\eta}(|\partial_u\phi^k|^2+|\partial_v\phi^k|^2)+(r-M)^3|\slashed\nabla\phi^k|^2+(r-M)^4|\phi^k|^2\big)  \\ + \iint_{\mathcal R\cap\mathcal B_{r_{-1},3M}}\big(|R\phi^k|^2+|\phi^k|^2\big)+\iint_{\mathcal R\cap \mathcal B_{3M,\infty}} \big(r^{1-\eta}(|\partial_u\phi^k|^2+|\partial_v\phi^k|^2)+r|\slashed\nabla\phi^k|^2+r^{-1-\eta}|\phi^k|^2\big)\\ \les \int_\mathrm{I}\big(|\partial_u\phi^k|^2+D|\slashed\nabla\phi^k|^2+D|\phi^k|^2\big) + \int_\mathrm{II}\big(r^2|\partial_v\phi^k|^2+r^2|\slashed\nabla\phi^k|^2+|\phi^k|^2\big) + \mathbb E_{T,k}(\mathcal R)+ \mathbb E_{Z,k}(\mathcal R),
\end{multline}
where $R\doteq \frac 12 (\partial_v-\partial_u)$ is the $\partial_{r_*}$ vector field in $(t,r_*)$-coordinates and
\begin{equation*}
    \mathbb E_{Z,k}(\mathcal R)\doteq \iint_{\mathcal R} D^2 r  |\phi^k||\mathcal N^k|.
\end{equation*}
\end{prop}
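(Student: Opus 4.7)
My plan is to prove \cref{prop:Morawetz} by the Morawetz multiplier method. I apply the general identity \eqref{eq:main-energy-identity} with a carefully chosen radial vector field $Z$ together with an auxiliary scalar current, integrate over $\mathcal R$, and then combine with a large multiple of the $T$-energy estimate \eqref{eq:T-energy-est} to absorb the indefinite-sign boundary contributions on hypersurfaces III--VI. Since $T$ and the $\Gamma_i$ are Killing, commutation produces no additional commutator error, so the computation can be done at level $k=0$ and then applied to the commuted solution $\phi^\k$.

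For the vector field, I would take $Z = f(r_*)R$ with $R = \tfrac12(\partial_v-\partial_u)=\partial_{r_*}$, and choose $f$ as a smooth, monotonically nondecreasing function of the tortoise coordinate, with $f'$ vanishing at $r_*=r_*(2M)$ (reflecting the photon sphere trapping) and with $Df'$ comparable to $(r-M)^{1+\eta}$ near the horizon and to $r^{-1-\eta}$ near infinity (with $\eta=\delta$ from \cref{def:energies}). Applying \eqref{eq:main-energy-identity} yields a positive derivative bulk of the form $-\tfrac12 r^2 f'(r_*)(|\partial_u\phi^\k|^2+|\partial_v\phi^\k|^2)$, producing the $(r-M)^{1+\eta}$ and $r^{1-\eta}$ weights in $\mathcal A$ and $\mathcal B_{3M,\infty}$. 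The angular derivative bulks $(r-M)^3|\slashed\nabla\phi^\k|^2$ and $r|\slashed\nabla\phi^\k|^2$ are extracted by integrating the $\mathring{\slashed\Delta}\phi^\k$ contributions in \eqref{eq:main-energy-identity} twice by parts (once on $S^2$, once in the null direction), yielding a schematic bulk $(Df)'|\slashed\nabla\phi^\k|_\circ^2$ which, after converting $|\slashed\nabla\phi^\k|_\circ^2 = r^2|\slashed\nabla\phi^\k|^2$, gives the claimed weights. To recover the nondegenerate $|R\phi^\k|^2$ bulk in the trapping region $\mathcal B_{r_{-1},3M}$, I would complement $Z$ with a lower-order modification of the form $g(r)\phi^\k$ (a zeroth-order current), which generates the missing $|R\phi^\k|^2$ contribution via integration by parts of $g(r)\phi^\k \cdot (\Box_g\phi^\k - \mathcal N^\k)$; this is the classical technique for bypassing the photon sphere degeneracy of the pure vector field multiplier. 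The zeroth-order bulks $(r-M)^4|\phi^\k|^2$ and $r^{-1-\eta}|\phi^\k|^2$ are then produced by applying the Hardy inequalities \cref{lem:hardy-near,lem:hardy-far} to the derivative bulks already at hand, with the leftover local integrals $\int|\phi^\k|^2$ absorbed into the $T$-energy contribution. The nonlinear source in \eqref{eq:main-energy-identity}, after Cauchy--Schwarz against the positive derivative bulk and one final application of Hardy to trade a derivative for $\phi^\k$, produces the claimed error $\mathbb E_{Z,k}(\mathcal R) \lesssim \iint D^2 r|\phi^\k||\mathcal N^\k|$.

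The main technical obstacle is the combined degeneration required of the multiplier at the photon sphere and at the extremal horizon. The vanishing $f'(r_*(2M))=0$ is forced by the existence of trapped null geodesics at $r=2M$, and is exactly why the statement of \cref{prop:Morawetz} degenerates in the trapping region $\mathcal B_{r_{-1},3M}$ to only $|R\phi^\k|^2+|\phi^\k|^2$, omitting the full first derivatives and the angular derivatives. The $(r-M)^{1+\eta}$ loss near the horizon is forced by the absence of any redshift vector field in the extremal geometry, and is consistent with --- and feeds into --- the $(r-M)^{-p}$-hierarchy of \cref{sec:horizon-hierarchy}. Finally, the boundary terms on hypersurfaces III--VI generated by $Z$ have no definite sign, and are absorbed by taking the multiple of \eqref{eq:T-energy-est} in the final combination large enough; this is what places $\mathbb E_{T,k}(\mathcal R)$ on the right-hand side of \eqref{eq:Morawetz} alongside $\mathbb E_{Z,k}(\mathcal R)$.
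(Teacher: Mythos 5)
Your plan captures the general flavor of the argument --- radial Morawetz multiplier plus a scalar current, with boundary terms absorbed into the $T$-energy --- but several concrete steps would fail or are misattributed, and the route differs from the paper's in a structurally important way.

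The paper does not apply the identity \eqref{eq:main-energy-identity} (which is at the level of $\phi^\k$) but instead multiplies the wave equation \eqref{eq:wave-psi} for $\psi^\k = r\phi^\k$ by the current $X_1\psi^\k$ built from $f_1(r)=(r-2M)\sqrt{r^2+4Mr-4M^2}/r^2$, following \cite{Morawetz-note}. This is not cosmetic. The identity \eqref{eq:main-energy-identity} carries the indefinite cross term $2rD(X^u - X^v)\partial_u\phi^\k\cdot\partial_v\phi^\k$, which for a radial multiplier $X=fR$ equals $-2rDf\big(|T\phi^\k|^2 - |R\phi^\k|^2\big)$, and your sketch never addresses it; with no compensating scalar current the $R$-derivative bulk coefficient is $rD(rf'-2f)$, which is negative near infinity for any bounded $f$. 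The scalar current is therefore not optional, is not introduced merely ``to recover $|R\phi^\k|^2$ at trapping,'' and is not a free parameter: in the paper's formulation its coefficient is $\partial_{r_*}f_1$, chosen precisely so that the current $X_1\psi^\k$ produces a coercive bulk. Relatedly, the photon-sphere degeneration enters through the angular-derivative bulk and the vanishing of the multiplier function $f_1$ (not $f_1'$) at $r=2M$, so the condition ``$f'$ vanishing at $r_*(2M)$'' in your sketch is not the right structural constraint.

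The more serious gap is near the extremal horizon. You assert that prescribing $Df'\sim (r-M)^{1+\eta}$ near $r=M$ will produce the $(r-M)^{1+\eta}$ bulk on $\mathcal A$, but this does not follow from the radial current alone: the paper achieves the stated near-horizon weights only after an additional dedicated step, integrating $\chi_\mathrm{near}e^g\partial_u\psi^\k\cdot\partial_u\partial_v\psi^\k$ and $\partial_u\big(\chi_\mathrm{near}(r-M)^\delta|\partial_v\psi^\k|^2\big)$ with $g(r)=(\log(r-M)^{-1})^{-1}$, following \cite{AAG20}. This step is precisely the extremal-horizon difficulty the proposition is built around, and nothing in your sketch substitutes for it. Finally, $\mathbb E_{Z,k}$ does not arise by Cauchy--Schwarz plus Hardy applied to the derivative bulk; it is the direct source contribution of the scalar multipliers $h_1\psi^\k$ and $h_2\psi^\k$, bounded using $|h_i|\les D^{3/2}r^{-3}$ and the estimate $|\psi^\k|\les r|\phi^\k|$, which is why it takes the precise weighted form $\iint D^2 r|\phi^k||\mathcal N^k|$ and is kept separate from the derivative-multiplier error $\mathbb E_{T,k}$ in the subsequent nonlinear analysis.
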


\begin{proof}[Sketch of proof] The Morawetz estimate for the linear wave equation on extremal Reissner--Nordstr\"om was first proved by Aretakis in \cite{Aretakis-instability-1}, but we outline here a proof of \eqref{eq:Morawetz} making use of observations by \cite{AAG20}, Apetroaie \cite{apetroaie}, and Holzegel--Mavrogiannis--Velozo Ruiz \cite{Morawetz-note}. 

First, we multiply \eqref{eq:wave-psi} by 
\begin{equation}\label{eq:X1}
    X_1\psi^\k \doteq - ( \partial_v f_1 - \partial_u f_1 ) \psi^\k - 2 f_1 (  \partial_v \psi^\k - \partial_u \psi^\k ) ,
\end{equation}
where 
\[f_1(r) \doteq (r-2M) \frac{\sqrt{r^2 + 4Mr - 4M^2}}{r^2}.\]
After integrating over $\mathcal R$, integrating by parts, and using the $T$-energy estimate \eqref{eq:T-energy-est} to control boundary terms, we obtain (see \cite[proof of Theorem 3]{Morawetz-note} for details)
\begin{equation}\label{mor:aux2}
     \iint_{\mathcal R} \left(D^{3/2}r^{-1}|R\phi^\k|^2+\left(1-\frac{2M}{r}\right)^2D^{3/2}r|\slashed\nabla\phi^\k|^2+D^2 r^{-2}|\phi^\k|^2\right)  
   \les \text{RHS of \eqref{eq:T-energy-est}} + \iint_{\mathcal R} Dr|X_1 \psi^\k||\mathcal{N}^\k|.
\end{equation}

Following \cite{apetroaie}, we can estimate the $T$ derivative by multiplying the wave equation \eqref{eq:wave-psi} by $h_1\psi^\k$, where 
\[ h_1 (r) \doteq - r^{-1} D^{3/2} ( 2 D^{1/2} -1 )^2 (1-D^{1/2} )^2.\]
 We also optimize the $r$-weight in \eqref{mor:aux2} by multiplying  \eqref{eq:wave-psi} by $X_2\psi^\k$ (and integrating by parts, etc.), which is defined in the same manner as \eqref{eq:X1} but with $f_1$ replaced by $f_2(r)\doteq (1-r^{-\delta})\chi_\mathrm{far}$, where $\chi_\mathrm{far}$ equals 1 on $\mathcal B_{3M,\infty}$ and vanishes on $\mathcal A$. Similarly, we optimize the weight on the $T$ derivative by multiplying \eqref{eq:wave-psi} by $h_2\psi^\k$ with $h_2(r)\doteq r^{-1-\delta}\chi_\mathrm{far}$ and integrating by parts. These modifications lead to the improved estimate
\begin{multline}\label{eq:Mor-intermediate-1}
 \iint_{\mathcal R} \left(D^{3/2}r^{1-\delta}|R\phi^\k|^2+\left(1-\frac{2M}{r}\right)^2D^{3/2}r^{1-\delta}|T\phi^\k|^2+\left(1-\frac{2M}{r}\right)^2D^{3/2}r|\slashed\nabla\phi^\k|^2+D^2 r^{-1-\delta}|\phi^\k|^2\right)   \\
   \les \text{RHS of \eqref{eq:T-energy-est}}+ \iint_{\mathcal R} Dr\big(|X_1 \psi^\k|+(|h_1|+|h_2|)|\psi^\k|\big)|\mathcal{N}^\k|.
\end{multline}

Finally, as in \cite{AAG20}, we improve the weights on the $R$- and $T$-derivatives in the near-horizon region $\mathcal R\cap \mathcal A'$ by integrating in that region the quantities $\chi_\mathrm{near}e^{g}\partial_u\psi^\k\cdot \partial_u\partial_v\psi^\k$ and $\partial_u\big(\chi_\mathrm{near}(r-M)^\delta |\partial_v\psi^\k|^2\big)$, where $g(r)\doteq (\log(r-M)^{-1})^{-1}$, and $\chi_\mathrm{near}$ equals $1$ on $\mathcal A'$ and vanishes on the complement of $\mathcal A$. 

By summing over $\k$, we have thus achieved
\begin{multline}\label{eq:mor-aux-4}
    \text{LHS of \eqref{eq:Morawetz}} \les \text{RHS of \eqref{eq:T-energy-est}} + \iint_{\mathcal R} Dr\big(|X_1 \psi^k|+|X_2\psi^k|+(|h_1|+|h_2|)|\psi^k|\big)|\mathcal{N}^k|\\ +\iint_{\mathcal R\cap\mathcal A}\big(e^{f_2}|\partial_u\psi^k|+D^{\delta/2}|\partial_v\psi^k|\big)|\mathcal{N}^k|.
\end{multline}
Using now the sequence of basic estimates $|\partial_u\psi^k|\les r|\partial_v\phi^k|+D|\phi^k|$, $|\partial_v\psi^k|\les r|\partial_u\phi^k|+D|\phi^k|$, $|f_1|\les 1$, $|Rf_1|\les Dr^{-3}$,  $|h_1|\les D^{3/2}r^{-3}$, $|Rf_2|\les |\chi_\mathrm{far}'|+ \chi_\mathrm{far}r^{-1-\delta}$, and $e^{g}\les 1$, we straightforwardly bound the bulk error on the right-hand side of \eqref{eq:mor-aux-4} by $\mathbb E_{T,k}(\mathcal R)+\mathbb E_{Z,k}(\mathcal R)$. This completes the sketch of the proof. \end{proof}

It is convenient to define
\begin{equation*}
    \mathbb M_k(\mathcal R)\doteq \int_\mathrm{I}\big(|\partial_u\phi^k|^2+D|\slashed\nabla\phi^k|^2+D|\phi^k|^2\big) + \int_\mathrm{II}\big(r^2|\partial_v\phi^k|^2+r^2|\slashed\nabla\phi^k|^2+|\phi^k|^2\big) + \mathbb E_{T,k}(\mathcal R)+ \mathbb E_{Z,k}(\mathcal R)
\end{equation*}
i.e., the right-hand side of the Morawetz estimate \eqref{eq:Morawetz}.

\subsubsection{Removing the degeneration at trapping}

In this section, we show how to estimate the remaining part of the integrated energy in the region $\mathcal B_{r_{-1},3M}$ at the expense of picking up a higher-order energy on the right-hand side of the estimate. 

\begin{prop}\label{prop:removing-trapping} Let $\phi$ be a solution of the wave equation \eqref{eq:wave-1} on the region $\mathcal R$ depicted in \cref{fig:butterfly}. For any $k\in\Bbb N_0$, it holds that
   \begin{equation*}
        \iint_{\mathcal R\cap\mathcal B_{r_{-1},3M}} \big(|\partial_u\phi^k|^2+|\partial_v\phi^k|^2+|\slashed\nabla\phi^k|^2\big)\les \Bbb M_{k+1}(\mathcal R)+\mathbb E_{\mathrm{trap},k}(\mathcal R),
   \end{equation*}
   where \begin{equation*}
       \mathbb E_{\mathrm{trap},k}(\mathcal R)\doteq \iint_{\mathcal R\cap\mathcal B_{r_{-2},4M}}|\phi^k||\mathcal N^k|,\quad r_{-2}\doteq M+\frac{r_{-1}-M}{2}.
   \end{equation*}
\end{prop}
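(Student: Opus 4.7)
My plan is to close the gap between \cref{prop:Morawetz}, which only provides $|R\phi^k|^2 + |\phi^k|^2$ integrated over $\mathcal R \cap \mathcal B_{r_{-1},3M}$ due to trapping at the photon sphere $r=2M$, and the non-degenerate integrand demanded by this proposition. The key observation is that the two missing directions, $T$ and $\slashed\nabla$, correspond exactly to the Killing symmetries of the background. Accordingly, applying \eqref{eq:Morawetz} at commutation level $k+1$ recovers $|T\phi^k|^2 + \sum_i|\Gamma_i\phi^k|^2$ on $\mathcal R\cap\mathcal B_{r_{-1},3M}$ via the term $|\phi^{k+1}|^2$ appearing on its left-hand side. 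Using $\sum_i|\Gamma_i f|^2 \sim r^2|\slashed\nabla f|^2$ from \eqref{eq:ang-1} together with $r\sim M$ on $\mathcal B_{r_{-1},3M}$, this yields
\[
\iint_{\mathcal R\cap\mathcal B_{r_{-1},3M}} \big(|T\phi^k|^2 + |\slashed\nabla\phi^k|^2\big) \les \mathbb M_{k+1}(\mathcal R).
\]

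To then also recover $|R\phi^k|^2$ non-degenerately, in a way that produces the $\mathbb E_{\mathrm{trap},k}(\mathcal R)$ error rather than bringing in $\mathcal N^{k+1}$, I would apply a local Lagrangian identity. Let $\chi=\chi(r)$ be a smooth radial cutoff with $\chi\equiv 1$ on $\mathcal B_{r_{-1},3M}$ and $\spt\chi \subset \mathcal B_{r_{-2},4M}$. Multiplying $\Box_g\phi^k = \mathcal N^k$ by $\chi^2\phi^k$, integrating over $\mathcal R$, and invoking $\phi^k\Box_g\phi^k = \nabla^\mu(\phi^k\nabla_\mu\phi^k)-g^{\mu\nu}\nabla_\mu\phi^k\nabla_\nu\phi^k$ together with the decomposition
\[
g^{\mu\nu}\nabla_\mu\phi^k\nabla_\nu\phi^k = D^{-1}\big(|R\phi^k|^2 - |T\phi^k|^2\big) + |\slashed\nabla\phi^k|^2,
\]
which follows from $\partial_u=T-R,\ \partial_v=T+R$, gives after rearrangement
\[
\iint \chi^2 D^{-1}|R\phi^k|^2 \les \iint \chi^2 D^{-1}|T\phi^k|^2 + \iint |\nabla(\chi^2)|\,|\phi^k||\nabla\phi^k| + \iint \chi^2 |\phi^k||\mathcal N^k| + \mathrm{BT},
\]
where $\mathrm{BT}$ denotes the null-flux boundary contributions on $\partial\mathcal R$. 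The first right-hand term is absorbed into $\mathbb M_{k+1}(\mathcal R)$ by the bound displayed above; the cutoff cross term is supported in the transition shells $\mathcal B_{r_{-2},r_{-1}}\cup\mathcal B_{3M,4M}$, which lie outside the photon sphere, so \eqref{eq:Morawetz} at level $k$ already non-degenerately controls all relevant derivatives there and the term is absorbed into $\mathbb M_{k+1}(\mathcal R)$ via Young's inequality; the third term is precisely $\mathbb E_{\mathrm{trap},k}(\mathcal R)$; and the $\mathrm{BT}$ contributions are absorbed by the analogous fluxes in \eqref{eq:T-energy-est} and hence in $\mathbb M_{k+1}(\mathcal R)$. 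Noting that $D\sim 1$ on $\mathcal B_{r_{-1},3M}$ removes the $D^{-1}$ weight.

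Combining the two bounds, and using $|\partial_u\phi^k|^2+|\partial_v\phi^k|^2\les |T\phi^k|^2+|R\phi^k|^2$, yields the full non-degenerate integrand on the left-hand side. The main technical point will be the bookkeeping in the transition shells and at the null boundaries: one must verify that the support of $d\chi/dr$ lies in the non-trapping region (immediate from $r_{-1}<2M<3M$ and the definitions in \cref{sec:regions}) and that the boundary fluxes can be absorbed into \eqref{eq:T-energy-est} at level $k+1$. Everything else is routine Young and Cauchy--Schwarz manipulation.
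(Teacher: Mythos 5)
Your proof is correct, and the route is genuinely different from the paper's in one place. You and the paper share the first step: applying the Morawetz estimate \eqref{eq:Morawetz} at order $k+1$ and using that $|\phi^{k+1}|^2 \gtrsim |T\phi^k|^2 + \sum_i|\Gamma_i\phi^k|^2 \sim |T\phi^k|^2 + |\slashed\nabla\phi^k|^2$ on $\mathcal B_{r_{-1},3M}$. Where you diverge is in recovering the remaining radial direction: the paper uses the pointwise identity $|\partial_u\phi^\k|^2+|\partial_v\phi^\k|^2 = 4|R\phi^\k|^2 + 2\,\partial_u\phi^\k\cdot\partial_v\phi^\k$ and then integrates the cross term $\iint\chi\,\partial_u\phi^\k\cdot\partial_v\phi^\k$ by parts in $u$, substituting the wave equation \eqref{eq:wave-phi-uv}; this produces a $\chi'$ term on the non-trapping shells, a $R\phi^\k\cdot\phi^\k$ term, an angular term handled via commutation, and the nonlinear term $\iint\chi D\phi^\k\cdot\mathcal N^\k \les \mathbb E_{\mathrm{trap},k}(\mathcal R)$. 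You instead reach the same conclusion via the Lagrangian multiplier $\chi^2\phi^k$: multiplying $\Box_g\phi^k = \mathcal N^k$ by $\chi^2\phi^k$, using $\phi^k\Box_g\phi^k = \nabla^\mu(\phi^k\nabla_\mu\phi^k) - g(d\phi^k,d\phi^k)$ and $g^{-1}(d\phi^k,d\phi^k) = D^{-1}(|R\phi^k|^2 - |T\phi^k|^2) + |\slashed\nabla\phi^k|^2$, and absorbing the boundary, cutoff-cross, and $|T\phi^k|^2$ terms as you describe. Both are valid and of comparable complexity; the Lagrangian form is arguably more transparent about which terms arise.

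One observation worth recording: this second half of the argument (in both your version and the paper's) is strictly unnecessary for the proposition as stated. The Morawetz LHS at level $k+1$ already controls $\iint_{\mathcal R\cap\mathcal B_{r_{-1},3M}}\big(|R\phi^{k+1}|^2+|\phi^{k+1}|^2\big)$, and since $|\partial_u\phi^k|^2+|\partial_v\phi^k|^2 = 2|T\phi^k|^2+2|R\phi^k|^2$ with $|T\phi^k|^2\les|\phi^{k+1}|^2$ and $|R\phi^k|^2\le|R\phi^{k+1}|^2$, one gets $\iint_{\mathcal R\cap\mathcal B_{r_{-1},3M}}\big(|\partial_u\phi^k|^2+|\partial_v\phi^k|^2+|\slashed\nabla\phi^k|^2\big)\les\mathbb M_{k+1}(\mathcal R)$ outright, with no need for the $\mathbb E_{\mathrm{trap},k}$ term. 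Equivalently, the cross term in the paper's proof satisfies $\partial_u\phi^\k\cdot\partial_v\phi^\k = |T\phi^\k|^2 - |R\phi^\k|^2 \le |T\phi^\k|^2 \les |\phi^{k+1}|^2$. This does not make your argument wrong (the stated bound follows a fortiori), but it shows that the Lagrangian step, like the paper's wave-equation substitution, is doing more work than is needed once $\mathbb M_{k+1}$ is already allowed on the right-hand side. The presence of $\mathbb E_{\mathrm{trap},k}$ in the statement is only meaningful in a framework where the highest-order nonlinear error on the right-hand side were kept at level $k$; since $\mathbb M_{k+1}$ already contains $\mathbb E_{T,k+1}+\mathbb E_{Z,k+1}$, it is not load-bearing here.
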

\begin{proof} Let $|\k|\le k$ and let $\chi:\Bbb R\to [0,1]$ be a cutoff function such that $\chi(r)=1$ on $\mathcal B_{r_{-1},3M}$ and $\chi(r)=0$ outside of $\mathcal B_{r_{-2},4M}$. The radial vector field $R=\frac 12(\partial_v-\partial_u)$ satisfies the identity
\[|\partial_u\phi^\k|^2+|\partial_v\phi^\k|^2 = 4|R\phi^\k|^2 + 2\partial_u\phi^\k \cdot\partial_v\phi^\k.\]
Multiplying this identity by $\chi$, integrating over $\mathcal R$, turning $\slashed\nabla$ into an angular commutation, and using the Morawetz estimate at order $k+1$, we estimate
\begin{equation*}
      \iint_{\mathcal R} \chi\big(|\partial_u\phi^\k|^2+|\partial_v\phi^\k|^2+|\slashed\nabla\phi^\k|^2\big)\les \iint_\mathcal{R}\chi\big(|\phi^{k+1}|^2+|R\phi^\k|^2+\partial_u\phi^\k\cdot\partial_v\phi^\k\big)\les \mathbb M_{k+1}(\mathcal R)+ \iint_\mathcal{R}\chi\,\partial_u\phi^\k\cdot\partial_v\phi^\k. 
\end{equation*}
We integrate by parts in $u$ and use the wave equation to obtain
\begin{equation*}
 \iint_\mathcal{R}\chi\,\partial_u\phi^\k\cdot\partial_v\phi^\k= \iint_{\mathcal R}\big(\chi'D \phi^\k\cdot\partial_v\phi^\k-2\chi r^{-1}DR\phi^\k\cdot\phi^\k -\chi D\phi^\k\cdot\slashed\Delta\phi^\k+\chi D\phi^\k\cdot\mathcal N^\k\big).
\end{equation*}
Since $\chi'\ne 0$ where our Morawetz estimate is coercive, the first term is $\les\mathbb M_k(\mathcal R)$, the second term is $\les\mathbb M_k(\mathcal R)$ since both $R\phi^\k$ and $\phi^\k$ are controlled without degeneration, the third term can be integrated by parts on the sphere and then estimated by $\les\mathbb M_{k+1}(\mathcal R)$ after turning $\slashed\nabla$ into an angular commutation, and the third term gives rise to $\mathbb E_{\mathrm{trap},k}(\mathcal R)$. This completes the proof.
\end{proof}

\subsection{The \texorpdfstring{$\mathcal H^+$}{H +}-localized hierarchy}\label{sec:horizon-hierarchy}
In this section, we prove the $(r-M)^{-p}$-hierarchy of weighted energy estimates in the near region. For the linear wave equation on Reissner--Nordstr\"om, these estimates were introduced by Aretakis in \cite{Aretakis-instability-1} for $p=0,1$, and $2$, and then for $p\in[0,2]$ by the first-named author, Aretakis, and Gajic in \cite{angelopoulos2020late}.

\begin{prop}[The $(r-M)^{-p}$ hierarchy]\label{prop:horizon-hierarchy}
    Let $\phi$ be a solution of the wave equation \eqref{eq:wave-1} on the region $\mathcal R$ depicted in \cref{fig:butterfly}. For any $k\in\Bbb N_0$ and $p\in [\delta,2-\delta]$, it holds that 
    \begin{multline}\label{eq:r-M}
     \int_{\mathrm{V}}\big((r-M)^{-p}(|\partial_u\psi^k|^2+|\partial_u\phi^k|^2)+c_{p_\star}(r-M)^{-p_\star+2}|\phi^k|^2\big) +\int_\mathrm{VI}(r-M)^{-p+2}\big(|\slashed\nabla\phi^k|^2+|\phi^k|^2\big) \\ + \iint_{\mathcal R\cap\mathcal A}\big((r-M)^{-p+1}(|\partial_u\psi^k|^2+|\partial_u\phi^k|^2)+(r-M)^{-p+3}|\slashed\nabla\phi^k|^2+(r-M)^{-p+3}|\phi^k|^2\big)   \\ \les \int_\mathrm{V}(r-M)^{-p}\big(|\partial_u\psi^k|^2+|\partial_u\phi^k|^2\big)+\mathbb M_k(\mathcal R)+ \underline{\mathbb E}{}_{p,k}(\mathcal R),
    \end{multline}
    where
    \begin{equation*}
         \underline{\mathbb E}{}_{p,k}(\mathcal R)\doteq \iint_{\mathcal R_{\le\Lambda}} (r-M)^{-p+2} |\partial_u\psi^k||\mathcal N^k|. 
    \end{equation*}
\end{prop}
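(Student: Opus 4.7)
The plan is to apply the multiplier $2\chi_\mathrm{near}(r-M)^{-p}\partial_u\psi^\k$ to the wave equation \eqref{eq:wave-psi} for each multi-index $\k$ with $|\k|\le k$, where $\chi_\mathrm{near}$ is a smooth cutoff equal to $1$ on $\mathcal A'$ and vanishing outside $\mathcal A$, and integrate over $\mathcal R$ with respect to $d\omega\,du\,dv$. The principal term rewrites by Leibniz as
\[
2\chi_\mathrm{near}(r-M)^{-p}\partial_u\psi^\k\partial_u\partial_v\psi^\k = \partial_v\bigl[\chi_\mathrm{near}(r-M)^{-p}|\partial_u\psi^\k|^2\bigr] + p\chi_\mathrm{near}(r-M)^{-p-1}D|\partial_u\psi^\k|^2 + (\partial_v\chi_\mathrm{near})(r-M)^{-p}|\partial_u\psi^\k|^2.
\]
Since $D\sim(r-M)^2/M^2$ near $\mathcal H^+$, the middle term is the coercive bulk $\sim pM^{-2}(r-M)^{-p+1}|\partial_u\psi^\k|^2$ that will contribute to the LHS of the target estimate, while integrating the total $v$-derivative over $\mathcal R$ yields the future-boundary contribution $\int_\mathrm{V}(r-M)^{-p}|\partial_u\psi^\k|^2$ on the LHS and matches the past (initial-data) flux on the RHS.

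For the $D\slashed\Delta\psi^\k$ term appearing on the right-hand side of \eqref{eq:wave-psi}, the strategy is to integrate by parts first on the sphere---converting $\partial_u\psi^\k\cdot\slashed\Delta\psi^\k\,d\omega$ into $-\tfrac12 r^{-2}\partial_u|\slashed\nabla\psi^\k|_\circ^2\,d\omega$---and then in $u$. Using $\partial_u r=-D$ and a direct computation, the leading behavior of $\partial_u[(r-M)^{-p}Dr^{-2}]$ at $\mathcal H^+$ is $-(2-p)(r-M)^{3-p}/M^6$, producing the coercive angular bulk $(2-p)\iint_{\mathcal R\cap\mathcal A}(r-M)^{-p+3}|\slashed\nabla\phi^\k|^2$ together with the boundary contribution $\int_\mathrm{VI}(r-M)^{-p+2}|\slashed\nabla\phi^\k|^2$ on the future outgoing cone near the horizon; the support of $\chi_\mathrm{near}$ ensures no spurious boundary terms arise at $u=u_1$ or at $\Gamma=\{r=\Lambda\}$. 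The zeroth-order term $-2\chi_\mathrm{near}(r-M)^{-p}DD'\phi^\k\cdot\partial_u\psi^\k$ is absorbed by Cauchy--Schwarz against the coercive bulks since $|DD'|\les(r-M)^3$; the nonlinear term $-2\chi_\mathrm{near}(r-M)^{-p}rD\partial_u\psi^\k\cdot\mathcal N^\k$ produces $\underline{\mathbb E}{}_{p,k}(\mathcal R)$ directly, using $rD\sim(r-M)^2/M$; and the cutoff contributions $\partial_{u,v}\chi_\mathrm{near}$ are supported in $\mathcal A\setminus\mathcal A'$, where the Morawetz estimate of \cref{prop:Morawetz} supplies $\mathbb M_k(\mathcal R)$.

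The remaining quantities on the LHS are recovered as follows. The $(r-M)^{-p}|\partial_u\phi^\k|^2$ flux on $\mathrm V$ and the $(r-M)^{-p+1}|\partial_u\phi^\k|^2$ bulk follow from the identity $\partial_u\phi^\k=r^{-1}(\partial_u\psi^\k+D\phi^\k)$ after absorbing the subleading $D^2\sim(r-M)^4$ contributions into the $|\phi^\k|^2$ bulks. The $|\phi^\k|^2$ contributions on $\mathrm V$, on $\mathrm{VI}$, and in the bulk at weight $(r-M)^{-p+3}$ are obtained via the weighted Hardy inequalities of \cref{lem:hardy-near}: for $p\in[\delta,1)$ they carry the degenerating factor $c_{p_\star}=(p-1)^2$ exactly as in the statement, while for $p\ge 1$ the corresponding terms reduce to $T$-energy contributions already embedded in $\mathbb M_k(\mathcal R)$. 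Summing over $|\k|\le k$ closes the estimate. The main obstacle is maintaining strict positivity of both coercive bulk coefficients---$p$ for the $|\partial_u\psi^\k|^2$ bulk and $(2-p)$ for the $|\slashed\nabla\phi^\k|^2$ bulk---uniformly on $p\in[\delta,2-\delta]$, which is precisely the role of the parameter $\delta$; a secondary delicacy is the careful bookkeeping of boundary pieces after the sphere and $u$ integrations by parts, to ensure they align with $\mathrm{VI}$ and no other piece of $\partial(\mathcal R\cap\mathcal A)$.
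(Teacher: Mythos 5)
Your proposal follows the paper's proof closely: same multiplier $2\chi(r-M)^{-p}\partial_u\psi^\k$, same cutoff (your $\chi_{\mathrm{near}}$ has the same support as the paper's $\chi$, vanishing for $r\ge r_{-1}$ and equal to $1$ for $r\le r_{-2}$), same Leibniz/IBP decomposition producing the coercive $(r-M)^{-p+1}|\partial_u\psi^\k|^2$ bulk, the angular IBP in $\omega$ then $u$ for the $D\slashed\Delta\psi^\k$ term, Cauchy--Schwarz for the $DD'\phi^\k$ term, and Morawetz for the cutoff terms.

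Two small points. First, a harmless sign typo: the Leibniz expansion should read $2\chi(r-M)^{-p}\partial_u\psi^\k\cdot\partial_u\partial_v\psi^\k = \partial_v\big[\chi(r-M)^{-p}|\partial_u\psi^\k|^2\big] + p\chi(r-M)^{-p-1}D|\partial_u\psi^\k|^2 \mathbf{-}\,(\partial_v\chi)(r-M)^{-p}|\partial_u\psi^\k|^2$; the sign of the cutoff term is immaterial since it lands in $\mathcal A\setminus\mathcal A'$ where Morawetz is coercive.

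Second, and more substantively: you attribute the $|\phi^\k|^2$ term on boundary $\mathrm{VI}$ to the weighted Hardy inequalities of \cref{lem:hardy-near}, but those are stated along the ingoing cones $\underline C(\tau)$ (the near-horizon analogue is $\mathrm V$), and they do not directly yield a flux bound on $\mathrm{VI}$, which is an \emph{outgoing} cone $H_u$ near the horizon. Indeed, the weight $(r-M)^{-p+2}$ vanishes at $r=M$, so a Hardy inequality along $H_u$ would go the wrong way. The paper instead obtains the $\mathrm{VI}$ zeroth-order flux by the fundamental theorem of calculus in $u$ together with Young's inequality and the Morawetz bound, writing $\int_\mathrm{VI}\chi(r-M)^{-p+2}|\phi^k|^2 = \iint_{\mathcal R}\partial_u\big(\chi(r-M)^{-p+2}|\phi^k|^2\big)$ and absorbing the resulting bulk terms into the already-coercive bulks and $\mathbb M_k(\mathcal R)$. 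Your treatment of the $|\phi^\k|^2$ contributions on $\mathrm V$ and in the bulk via Hardy is correct; it is only the $\mathrm{VI}$ flux that needs this separate FTC argument.
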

\begin{proof} Let $|\k|\le k$. Let $\chi:\Bbb R\to [0,1]$ be a cutoff function such that $\chi(r)=1$ for $r\le r_{-2}$ and $\chi(r)=0$ for $r\ge r_{-1}$. We evaluate the expression
    \begin{equation*}
        \iint_{\mathcal R}\partial_v \big(\chi(r)(r-M)^{-p}|\partial_u\psi^\k|^2\big) 
    \end{equation*}
    first by integrating by parts and then using the wave equation \eqref{eq:wave-psi}. This leads to the identity
    \begin{multline}\label{eq:horizon-proof-1}
        \int_\mathrm{V}\chi( r-M)^{-p}|\partial_u\psi^\k|^2 -  \int_\mathrm{I}\chi( r-M)^{-p}|\partial_u\psi^\k|^2 
        = -\iint_{\mathcal R} p\chi (r-M)^{-p-1}D |\partial_u\psi^\k|^2 \\
+\iint_{\mathcal R} \chi' D ( r-M)^{-p}|\partial_u\psi^\k|^2 -\iint_{\mathcal R}2\chi (r-M)^{-p}DD'\phi^\k\cdot\partial_u\psi^\k\\
+\iint_{\mathcal R}2\chi (r-M)^{-p}D\slashed\Delta\psi^\k\cdot\partial_u\psi^\k
-\iint_{\mathcal R} 2\chi (r-M)^{-p} rD\mathcal N^\k\cdot \partial_u\psi^\k.
    \end{multline}
The first integral on the right-hand side gives rise to the main bulk term in \eqref{eq:r-M}. The second integral is $\les \mathbb M_k(\mathcal R)$ since $\chi'$ is supported where \eqref{eq:Morawetz} controls $|\partial_u\psi^\k|^2$. The third term is estimated by applying Young's inequality, Morawetz, and Hardy; for details see the proof of Lemma 6.18 in \cite{AKU24}. The fourth integral gives good bulk and flux terms. Indeed, integrating by parts on the sphere and then in $u$, we have 
\begin{multline*}
    \iint_{\mathcal R}2\chi ( r-M)^{-p} D\slashed\Delta\psi^\k \cdot\partial_u\psi^\k  = - \iint_{\mathcal R} \chi ( r-M)^{-p} D\partial_u|\slashed\nabla\psi^\k|_\circ^2 \\
    \les \mathbb M_k(\mathcal R)+ \iint_{\mathcal R} \chi \partial_u(r^{-2}(r-M)^{-p+2})|\slashed\nabla\psi^\k|^2_\circ - \int_\mathrm{VI}\chi (r-M)^{-p}D|\slashed\nabla\psi^\k|^2.
\end{multline*} As $r\to M$, we have that $
    \partial_u(r^{-2}(r-M)^{-p+2})\sim (p-2)(r-M)^{-p+3},$
which gives the good angular bulk term in \eqref{eq:r-M}. Finally, the last term in \eqref{eq:horizon-proof-1} is clearly bounded by $\underline{\mathbb E}{}_{p,k}(\mathcal R)$. 

To obtain the zeroth order term on $\mathrm{VI}$, we use the fundamental theorem of calculus and the Morawetz estimate to write
\begin{multline*}
    \int_\mathrm{VI}\chi (r-M)^{-p+2}|\phi^k|^2 =\iint_\mathcal{R}\partial_u\big(\chi(r-M)^{-p+2}|\phi^k|^2\big)\les\mathbb M_k(\mathcal R) \\+ \iint_\mathcal{R}\chi\big((r-M)^{-p+1}|\partial_u\phi^k|^2+(r-M)^{-p+3}|\phi^k|^2\big).
\end{multline*}
    The proof is now completed by using \cref{prop:T-energy-est,prop:Morawetz} to remove the cutoff $\chi$ and applying \cref{lem:hardy-near} to estimate the remaining terms in \eqref{eq:r-M}.
\end{proof}

\subsection{The \texorpdfstring{$\mathcal I^+$}{I+}-localized hierarchy}\label{sec:rp-hierarchy}

In this section, we prove Dafermos and Rodnianski's hierarchy of $r^p$-weighted energy estimates in the far region \cite{dafermos2010new}. 

\begin{prop}[The $r^p$ hierarchy]\label{prop:rp-hierarchy} Let $\phi$ be a solution of \eqref{eq:wave-1} on the region $\mathcal R$ depicted in \cref{fig:butterfly}. For any $k\in\Bbb N_0$ and $p\in [\delta,2-\delta]$, it holds that 
    \begin{multline}\label{eq:rp-estimate}
\int_\mathrm{IV}\big(r^p|\partial_v\psi^k|^2+c_{p_\star}r^{p_\star+2}|\partial_v\phi^k|^2+c_{p_\star}r^{p_\star}|\phi^k|^2\big) +\int_\mathrm{III} \big(r^{p+2}|\slashed\nabla\phi^k|^2+r^p|\phi^k|^2\big) \\  + \iint_{\mathcal R_{\ge\Lambda}}\big(r^{p-1}|\partial_v\psi^k|^2+r^{p+1}|\partial_v\phi^k|^2+r^{p+1}|\slashed\nabla\phi^k|^2+r^{p-1}|\phi^k|^2\big)  \les \int_\mathrm{II}r^p|\partial_v\psi^k|^2 + \mathbb M_k(\mathcal R) + \mathbb E_{p,k}(\mathcal R),
    \end{multline}
    where
    \begin{equation*}
       \mathbb E_{p,k}(\mathcal R)\doteq \iint_{\mathcal R_{\ge\Lambda}} r^{p+1}|\partial_v\psi^k||\mathcal N^k|. 
    \end{equation*}
\end{prop}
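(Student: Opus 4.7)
The plan is to mirror the proof of the near-horizon hierarchy in \cref{prop:horizon-hierarchy}, exchanging the roles of $u$ and $v$ and replacing the degenerate weight $(r-M)^{-p}$ by the growing weight $r^p$, with the cutoff now localized to the far region. Fix $|\k|\le k$ and let $\chi_{\mathrm{far}}(r)$ be a smooth cutoff equal to $1$ for $r\ge \Lambda$ and vanishing for $r\le \Lambda/2$. The main step is to evaluate
\[\iint_\mathcal{R}\partial_u\big(\chi_{\mathrm{far}}\,r^p|\partial_v\psi^\k|^2\big)\]
in two ways: Stokes produces the flux difference $\int_\mathrm{IV}\chi_{\mathrm{far}} r^p|\partial_v\psi^\k|^2 - \int_\mathrm{II}\chi_{\mathrm{far}} r^p|\partial_v\psi^\k|^2$, while direct differentiation using $\partial_u r = -D$ together with the wave equation \eqref{eq:wave-psi} (to substitute $\partial_u\partial_v\psi^\k$) produces the main positive bulk $p\iint \chi_{\mathrm{far}} D r^{p-1}|\partial_v\psi^\k|^2$ together with four error contributions: (i) a cutoff term $\iint \chi_{\mathrm{far}}' D r^p|\partial_v\psi^\k|^2$ supported in $\{\Lambda/2\le r\le\Lambda\}$ and hence controlled by $\mathbb M_k(\mathcal R)$; (ii) an angular cross term $2\iint\chi_{\mathrm{far}} r^{p-2}D\mathring{\slashed\Delta}\psi^\k\cdot\partial_v\psi^\k$; (iii) a $\phi$-derivative cross term $-2\iint\chi_{\mathrm{far}} r^p DD'\phi^\k\cdot\partial_v\psi^\k$; and (iv) a nonlinear cross term $-2\iint\chi_{\mathrm{far}} r^{p+1}D\mathcal N^\k\cdot\partial_v\psi^\k$, which is exactly $\mathbb E_{p,k}(\mathcal R)$ up to Cauchy--Schwarz.

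The angular cross term (ii) is handled by integrating by parts on $S^2$ and then in $v$, producing the good angular bulk $(2-p)\iint\chi_{\mathrm{far}} r^{p-1}|\slashed\nabla\psi^\k|^2$ (positive for $p<2$) together with the boundary flux $\int_\mathrm{III}\chi_{\mathrm{far}} r^p|\slashed\nabla\psi^\k|^2$, which converts into $\int_\mathrm{III}r^{p+2}|\slashed\nabla\phi^\k|^2$ after replacing $\slashed\nabla\psi^\k = r\slashed\nabla\phi^\k$ modulo lower order. The cross term (iii) is handled by Cauchy--Schwarz with an $\ve$-trick, absorbing a small fraction of the main bulk and leaving an $\iint r^{p-3}|\phi^\k|^2$ error that is controlled via \cref{lem:hardy-far}. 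To produce the zeroth-order flux $\int_\mathrm{III} r^p|\phi^\k|^2$, the zeroth-order bulk $\iint r^{p-1}|\phi^\k|^2$, and the anomalous $c_{p_\star}$-weighted terms on IV, I will add a small multiple of a null Lagrangian identity of the form $\partial_u(\chi_{\mathrm{far}} r^{p-1}|\phi^\k|^2)+\partial_v(\chi_{\mathrm{far}} r^{p-1}|\phi^\k|^2)$, mirroring the strategy used in the proof of \cref{prop:T-energy-est}, and then apply the $r^p$-weighted Hardy inequalities of \cref{lem:hardy-far}. The last bulk term $\iint r^{p+1}|\partial_v\phi^\k|^2$ on the left-hand side follows algebraically from the $\iint r^{p-1}|\partial_v\psi^\k|^2$ and $\iint r^{p-1}|\phi^\k|^2$ bulks via $r\partial_v\phi^\k = \partial_v\psi^\k - D\phi^\k$. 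Summing over $|\k|\le k$ and using \cref{prop:T-energy-est,prop:Morawetz} to absorb all compactly supported lower-order errors into $\mathbb M_k(\mathcal R)$ completes the proof of \eqref{eq:rp-estimate}.

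The main obstacle is the bookkeeping around the critical exponent $p=1$, where the $r^p$-weighted Hardy inequality of \cref{lem:hardy-far} degenerates. This is precisely why the zeroth-order and $|\partial_v\phi^\k|^2$ terms on IV appear with the weight-dependent coefficient $c_{p_\star}=(p_\star-1)^2$, which vanishes quadratically at $p=1$ and allows the single estimate \eqref{eq:rp-estimate} to cover the full range $p\in[\delta,2-\delta]$ while being weakest at the critical exponent. Beyond this, the argument is a direct adaptation of the classical Dafermos--Rodnianski $r^p$-estimate of \cite{dafermos2010new} to the inhomogeneous setting, with the nonlinear error cleanly packaged into $\mathbb E_{p,k}(\mathcal R)$ thanks to the Cauchy--Schwarz-friendly form of the multiplier $r^p\partial_v\psi^\k$.
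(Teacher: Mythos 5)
Your main multiplier calculation---evaluating $\iint\partial_u\big(\chi\, r^p|\partial_v\psi^\k|^2\big)$, substituting via the wave equation \eqref{eq:wave-psi}, integrating the $\mathring{\slashed\Delta}$ cross term by parts on $S^2$ and then in $v$ to extract the good angular bulk and the flux on $\mathrm{III}$, absorbing the $DD'\phi^\k\cdot\partial_v\psi^\k$ term by Young plus \cref{lem:hardy-far}, discarding the $\chi'$ term into $\mathbb M_k(\mathcal R)$, and packaging the nonlinear term into $\mathbb E_{p,k}(\mathcal R)$---matches the paper's proof step for step. The difference lies in how you try to generate the zeroth-order flux on $\mathrm{III}$ and the $c_{p_\star}$-terms on $\mathrm{IV}$, and there the proposal has a genuine gap.

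Your null Lagrangian $\partial_u\big(\chi_\mathrm{far}r^{p-1}|\phi^\k|^2\big)+\partial_v\big(\chi_\mathrm{far}r^{p-1}|\phi^\k|^2\big)$ cannot deliver the required terms. First, integrating this divergence over $\mathcal R$ yields a flux $\chi_\mathrm{far}r^{p-1}|\phi^\k|^2$ on $\mathrm{III}$, whereas the statement requires $r^p|\phi^\k|^2$, and since $\mathrm{III}$ is an \emph{ingoing} cone (no $\partial_v\psi^\k$ flux lives there), \cref{lem:hardy-far} cannot upgrade the weight. Second, even if you fix the weight to $r^p$, expanding the divergence produces a bulk $\iint 2\chi_\mathrm{far}r^p\,\phi^\k\cdot T\phi^\k$; Cauchy--Schwarz then inevitably introduces $\iint r^{p+1}|\partial_u\phi^\k|^2$, and the far-region Morawetz bulk only controls $r^{1-\delta}|\partial_u\phi^\k|^2$, so this is out of reach once $p>-\delta$. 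Third, a single divergence $\partial_u(f)+\partial_v(f)$ necessarily puts the \emph{same} weight flux on both $\mathrm{III}$ and $\mathrm{IV}$, which cannot reproduce the asymmetric weights ($r^p$ on $\mathrm{III}$ versus $c_{p_\star}r^{p_\star}$ on $\mathrm{IV}$). The paper sidesteps all of this by integrating $\partial_v\big(\chi r^p|\phi^\k|^2\big)$ \emph{alone}: this produces the flux on $\mathrm{III}$ with the correct $r^p$ weight, no flux on $\mathrm{IV}$, and a bulk involving only $\phi^\k\cdot\partial_v\phi^\k$, which by Cauchy--Schwarz is controlled by $\iint r^{p-1}|\phi^\k|^2+r^{p+1}|\partial_v\phi^\k|^2$, both estimable from the $r^{p-1}|\partial_v\psi^\k|^2$ bulk by \cref{lem:hardy-far}. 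The $c_{p_\star}$-weighted terms on $\mathrm{IV}$ are then obtained separately from the first two Hardy inequalities in \cref{lem:hardy-far} applied directly to the $r^p|\partial_v\psi^\k|^2$ flux on $\mathrm{IV}$, not from any Lagrangian identity.
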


\begin{proof} Let $|\k|\le k$. Let $\chi:\Bbb R\to [0,1]$ be a cutoff function such that $\chi(r)=0$ for $r\le \Lambda$ and $\chi(r)=1$ for $r\ge 2\Lambda$.  We evaluate the expression
    \begin{equation*}
        \iint_{\mathcal R}\partial_u\big(\chi(r) r^p |\partial_v\psi^\k|^2\big) d\omega du dv
    \end{equation*} first by integrating by parts in $u$ and then using the wave equation. This leads to the identity
    \begin{multline}\label{eq:rp-proof-1}
        \int_\mathrm{IV}\chi r^p|\partial_v\psi^\k|^2 - \int_\mathrm{II}\chi r^p|\partial_v\psi^\k|^2 = - \iint_{\mathcal R}p\chi r^{p-1}D|\partial_v\psi^\k|^2- \iint_{\mathcal R}\chi' D r^p |\partial_v\psi^\k|^2 \\ - \iint_{\mathcal R}2\chi r^p DD'\phi^\k\cdot\partial_v\psi^\k + \iint_{\mathcal R}2\chi r^{p-2}D \mathring{\slashed{\Delta}}\psi^\k\cdot\partial_v\psi^\k
        -\iint_{\mathcal R}2\chi r^{p+1}D \mathcal N^\k\cdot\partial_v\psi^\k.
    \end{multline}
    The first term on the right-hand side gives rise to the main bulk term in \eqref{eq:rp-estimate}. The second integral is $\les\mathbb M_k(\mathcal R)$ since $\chi'$ has bounded support. The third term can be absorbed by using Young's inequality, Morawetz, and Hardy, since $|D'|\les r^{-2}$; see the proof of Lemma 6.23 in \cite{AKU24} for details. The fourth term on the right-hand side of \eqref{eq:rp-proof-1} contributes good bulk and flux terms. Indeed, integrating by parts on the sphere and then in $v$, we have
 \begin{multline*}
         \iint_{\mathcal R}2\chi r^{p-2}D \mathring{\slashed{\Delta}}\psi^\k\cdot\partial_v\psi^\k= -\iint_{\mathcal R} \chi r^{p-2}D \partial_v|\slashed\nabla\psi^\k|^2_\circ   \\
     \les \mathbb M_k(\mathcal R) + \iint_{\mathcal R}\chi \partial_v(r^{p-2}D) |\slashed\nabla\psi^\k|^2_\circ - \int_\mathrm{III}\chi r^p D|\slashed\nabla\psi^k|^2_\circ.
    \end{multline*}
    As $r\to \infty$, we have that $\partial_v(r^{p-2}D)\sim (p-2)r^{p-3}$, which gives the good angular bulk term in \eqref{eq:rp-estimate}. Finally, the last term in \eqref{eq:rp-proof-1} is clearly bounded by $\mathbb E_{p,k}(\mathcal R)$. 
    
    To obtain the zeroth order term on $\mathrm{III}$, we use the fundamental theorem of calculus and the Morawetz estimate to write
    \begin{equation*}
        \int_\mathrm{III}\chi r^p|\phi^k|^2 = \iint_{\mathcal R}\partial_v\left(\chi r^p|\phi^k|^2\right)\les \mathbb M_k(\mathcal R) +\iint_{\mathcal R_{\ge\Lambda}}\big(r^{p+1}|\partial_v\phi^k|^2+r^{p-1}|\phi^k|^2\big).
    \end{equation*}
    The proof is now completed by using \cref{prop:T-energy-est,prop:Morawetz} to remove the cutoff $\chi$ and applying \cref{lem:hardy-far} to estimate the remaining terms in \eqref{eq:rp-estimate}. \end{proof}

\section{The main estimates}\label{sec:error-est}

In this section, we estimate the nonlinear bulk error terms $\mathbb E_{T,k}(\mathcal R)$, $\mathbb E_{Z,k}(\mathcal R)$, $\mathbb E_{\mathrm{trap},k}(\mathcal R)$, $\underline{\mathbb E}{}_{p,k}(\mathcal R)$, and $\mathbb E_{p,k}(\mathcal R)$ that arose in the previous section. These errors are estimated in the context of a continuity (bootstrap) argument, which is set up in \cref{sec:bs-set}. In \cref{sec:structure}, we further analyze the structure of the error terms and set up some more notation to make the organization of the estimates more transparent. In \cref{sec:L1-L2}, we prove the $L^1L^\infty$ and $L^2L^\infty$ estimates that are fundamental to our approach. In \cref{sec:pw}, we prove pointwise estimates for $u$- and $v$-derivatives of $\phi^{n-6}$. In \cref{sec:trapping}, we estimate $\mathbb E_{\mathrm{trap},k}(\mathcal R)$. In \cref{sec:T-error}, we estimate $\mathbb E_{T,k}(\mathcal R)$ and $\mathbb E_{Z,k}(\mathcal R)$. In \cref{sec:horizon-error}, we estimate $\underline{\mathbb E}{}_{p,k}(\mathcal R)$. Finally, in \cref{sec:rp-error}, we estimate $\mathbb E_{p,k}(\mathcal R)$.

\subsection{The bootstrap assumptions}\label{sec:bs-set}

We define the ``bootstrap set'' of times on which the bootstrap assumptions hold. Recall the integer $n\ge12$ which denotes the maximum number of commutations that we assume are controlled on the initial data hypersurface $\Sigma_0$.

\begin{defn}\label{def:bootstrap}
    Let $\ve_0>0$ and $A\ge 1$. Let $\mathfrak B(\mathring\phi,\ve_0, A)$ denote the set of $\tau_f\in[1,\infty)$ such that the solution $\phi$ to \eqref{eq:wave-1} with characteristic initial data $\mathring\phi$ exists and is smooth on $\mathcal D^{\tau_f}$ and satisfies the following estimates, where $
        \ve\doteq A\ve_0$: 
\begin{enumerate}
    \item \underline{Pointwise estimates}: It holds that 
    \begin{align}
 \label{eq:boot-pw-1}   |\partial_u\psi^{n-6}|+r|\partial_u\phi^{n-6}|    &\le \ve^{1/2},\\
  \label{eq:boot-pw-2}    r^{3/2-\delta}|\partial_v\psi^{n-6}|+ r^2|\partial_v\phi^{n-6}|    &\le \ve^{1/2}
    \end{align}
    in $\mathcal D^{\tau_f}\cap \{r\ge\Lambda\}$ and 
        \begin{align}
  \label{eq:boot-pw-4}     ( r-M)^{-3/2+\delta}|\partial_u\phi^{n-6}|   &\le \ve^{1/2},\\
\label{eq:boot-pw-5}        |\partial_v\phi^{n-6}|  &\le \ve^{1/2}
    \end{align}
    in $\mathcal D^{\tau_f}\cap \{r\le\Lambda\}$. 

\item \underline{Boundedness, decay, and growth for the master energies with degeneration at trapping}: Let $1\le\tau_1\le\tau_2\le\tau_f$. For $p\in \{0\}\cup[\delta,2-\delta]$, it holds that
\begin{equation}\label{eq:boot-energy-1}
   \mathcal X^{\tau_f}_{p,n-2}(\tau_1,\tau_2)\le \ve^2\tau^{-2+\delta+p}_1,
\end{equation}
and for $p\in\{0\}\cup[\delta,1+\delta]$, it holds that
\begin{align}\label{eq:boot-energy-2}
   \mathcal X^{\tau_f}_{p,n-1}(\tau_1,\tau_2) &\le \ve^2 \tau^{\max\{-1-\delta+p,-1\}}_1, \\
     \label{eq:boot-energy-3}
   \mathcal X^{\tau_f}_{p,n}(\tau_1,\tau_2)&\le \ve^2 \tau^{\max\{0,-1+3\delta+p\}}_2.
\end{align}

\item \underline{Integrated boundedness and decay at trapping}: Let $1\le\tau_1\le\tau_2\le\tau_f$. It holds that 
\begin{equation}
    \label{eq:boot-bulk-lower-order}  \mathcal Y^{\tau_f}_{n-3}(\tau_1,\tau_2)\le \ve^{3/2}\tau_1^{-2+\delta}.
\end{equation}
\end{enumerate}
\end{defn}

We will often simply write $\mathfrak B$ for $\mathfrak B(\mathring\phi,\ve_0,A)$. By standard local well-posedness arguments for the characteristic initial value problem as mentioned already in \cref{sec:loc}, we have:

\begin{prop}\label{lem:nonempty}
     For any $A\ge 1$ sufficiently large and $\ve_0$ sufficiently small, the following holds:
     \begin{enumerate}[(i)]
         \item The bootstrap set $\mathfrak B(\mathring\phi,\ve_0,A)$ is nonempty and connected.

              \item For any $\theta\in (0,1)$ and $T>1$, there exists an $\eta(\theta,T)>0$ such that if $\tau_f\in \mathfrak B(\mathring\phi,\ve_0,A)$ with $\tau_f\le T$ has the property that each of the estimates \eqref{eq:boot-pw-1}--\eqref{eq:boot-bulk-lower-order} hold on $\mathcal D^{\tau_f}$ with right-hand sides multiplied by $\theta\in(0,1)$, then $\tau_f+\eta\in \mathfrak B(\mathring\phi,\ve_0,A)$. 
     \end{enumerate}
\end{prop}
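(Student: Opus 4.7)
The plan is a standard local-well-posedness plus continuity argument, leveraging the characteristic local existence theorem for semilinear wave equations already invoked in Section 3.1.3 (via Luk's work). The entire proof rests on three ingredients: (a) the initial data norm $\|\mathring\phi\|_\star$ directly controls all the bootstrap quantities at $\tau = 1$; (b) each quantity appearing in \eqref{eq:boot-pw-1}--\eqref{eq:boot-bulk-lower-order} depends continuously on $\tau_f$; and (c) the local existence time is quantitative in the ambient smallness parameters.

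For part (i), I would first apply the local well-posedness theorem to produce, for $\ve_0$ sufficiently small, a smooth solution $\phi$ of \eqref{eq:wave-1} on $\mathcal D^{1+\eta_0}$ for some $\eta_0 > 0$ depending only on $\|\mathring\phi\|_\star$. To check that some $\tau_f > 1$ lies in $\mathfrak B$, I would evaluate the bootstrap quantities at $\tau_f = 1 + \eta$ for small $\eta$: the initial values $\mathcal E_{p,k}(1)$ and $\underline{\mathcal E}{}_{p,k}(1)$ appearing in $\mathcal X^{\tau_f}_{p,k}$ are controlled by $\|\mathring\phi\|_\star^2 \le \ve_0^2 = \ve^2/A^2$ (using Hardy inequalities from \cref{lem:hardy-near,lem:hardy-far} to pass between different $p$-weights and between $\psi^k$ and $\phi^k$). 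For $A$ taken large enough these are strictly smaller than the right-hand sides of \eqref{eq:boot-energy-1}--\eqref{eq:boot-energy-3}. The bulk integrals in $\mathcal X^{\tau_f}_{p,k}$ and $\mathcal Y^{\tau_f}_{n-3}$ and the fluxes along the ``top'' hypersurfaces are $O(\eta)$-small by continuity and direct estimation in the thin slab $\{1 \le \tau \le 1+\eta\}$. The pointwise bounds \eqref{eq:boot-pw-1}--\eqref{eq:boot-pw-5} follow at $\tau = 1$ from the definition of $\|\cdot\|_\star$ and from angular Sobolev \eqref{eq:angular-Sobolev}, and persist by continuity to $\mathcal D^{1+\eta}$ for $\eta$ small. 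Thus $[1,1+\eta] \subset \mathfrak B$ for some $\eta > 0$. Connectedness is then immediate: if $\tau_f \in \mathfrak B$ and $1 \le \tau_f' \le \tau_f$, then $\mathcal D^{\tau_f'} \subset \mathcal D^{\tau_f}$ and the energies $\mathcal X^{\tau_f'}_{p,k}(\tau_1,\tau_2) \le \mathcal X^{\tau_f}_{p,k}(\tau_1,\tau_2)$ for $\tau_2 \le \tau_f'$, so all bootstrap inequalities restrict trivially; hence $\mathfrak B$ is an interval containing a right-neighborhood of $1$.

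For part (ii), suppose $\tau_f \in \mathfrak B$ with $\tau_f \le T$ and the bootstrap estimates hold with right-hand sides multiplied by $\theta$. Since the bootstraps provide uniform control of $n+1$ commuted energies (and via Sobolev, pointwise control of lower-order quantities) on $\mathcal D^{\tau_f}$, with bounds depending only on $\ve$ and $T$, I would apply the local existence theorem in a neighborhood of the terminal hypersurface $\Gamma(\tau_f)$ to produce an extension of $\phi$ to $\mathcal D^{\tau_f + \eta}$ for some $\eta = \eta(\theta, T) > 0$. On the extended region, each bootstrap quantity can be split as its value on $\mathcal D^{\tau_f}$ (bounded by $\theta$ times the RHS) plus a contribution from the slab $\{\tau_f \le \tau \le \tau_f + \eta\}$; the slab contribution is $O(\eta)$ by direct integration of bounded quantities, and by choosing $\eta$ sufficiently small depending on $\theta$ and $T$ this slab contribution is strictly less than $(1-\theta)$ times the RHS, closing the bootstrap with constant $1$ on $\mathcal D^{\tau_f + \eta}$.

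The hardest step in practice is the propagation of the pointwise estimates \eqref{eq:boot-pw-1}--\eqref{eq:boot-pw-5} across the extension, since $\|\cdot\|_\star$ only directly bounds them on $\underline C(1)$ and $C(1)$, not in a full spacetime neighborhood; however, the Sobolev embedding \eqref{eq:angular-Sobolev} combined with the $n-2$-commuted energy control allows us to bound such quantities pointwise with arbitrarily high probability of strict inequality on the sliver $\{\tau_f \le \tau \le \tau_f + \eta\}$, provided the extension preserves smoothness and higher-order energy bounds, which is exactly the content of the quantitative local well-posedness theorem. The remaining estimates are entirely routine consequences of continuity of the energy functionals and fluxes in $\tau_f$ together with the fact that the bulk integrals $\mathcal M$, $\underline{\mathcal M}$, and $\mathcal Y$ over the slab $[\tau_f, \tau_f + \eta]$ vanish linearly as $\eta \to 0$.
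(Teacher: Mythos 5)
Your proposal is correct and is precisely the standard argument the paper has in mind: the paper gives no proof of this proposition, simply asserting it as a consequence of local well-posedness for the characteristic initial value problem (citing \cite[Section 7]{Luk-local-existence}). Your sketch---verify the bootstrap quantities at $\tau=1$ from $\|\mathring\phi\|_\star\le\ve_0$ and exploit the gain $\ve_0^2 < A^2\ve_0^2 = \ve^2$ for (i), and local extension from the terminal bifurcate cone plus a thin-slab estimate for (ii)---is the standard fleshing-out of that invocation, and the monotonicity $\mathcal X^{\tau_f'}_{p,k}\le\mathcal X^{\tau_f}_{p,k}$ for $\tau_f'\le\tau_f$ correctly yields connectedness. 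One cosmetic slip: the phrase ``with arbitrarily high probability of strict inequality'' in your discussion of the pointwise bounds is garbled and should be replaced by something like ``with a strict margin by continuity,'' but this does not affect the argument.
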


The main analytic content of this paper is contained in the following proposition, which verifies the assumption of part (ii) of the previous proposition, and whose proof will be given in \cref{sec:completing} below and relies on all of the estimates proved in the remainder of this section.

\begin{prop}\label{prop:improving}
    For any $A\ge 1$ sufficiently large, $\ve_0$ sufficiently small, and $\tau_f\in\mathfrak B$, the estimates \eqref{eq:boot-pw-1}--\eqref{eq:boot-bulk-lower-order} hold with the constant 1 on the right-hand side replaced by $\frac 12$.
\end{prop}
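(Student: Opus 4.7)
The plan is to close the bootstrap by estimating each of the nonlinear error terms $\mathbb E_{T,k}(\mathcal R)$, $\mathbb E_{Z,k}(\mathcal R)$, $\mathbb E_{\mathrm{trap},k}(\mathcal R)$, $\underline{\mathbb E}{}_{p,k}(\mathcal R)$, and $\mathbb E_{p,k}(\mathcal R)$ in the a priori hierarchies of \cref{prop:T-energy-est,prop:Morawetz,prop:removing-trapping,prop:horizon-hierarchy,prop:rp-hierarchy}, using the bootstrap assumptions \eqref{eq:boot-pw-1}--\eqref{eq:boot-bulk-lower-order} together with the auxiliary $L^1_vL^\infty_{u,\omega}$ / $L^2_vL^\infty_{u,\omega}$ estimates (and their far-region analogues) sketched in \cref{sec:intro-outline}. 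At each commutation level $k\in\{n-2,n-1,n\}$ and each exponent $p$ in the allowed range, one substitutes $\mathcal N$ with one of the strong-null-form prototypes \eqref{eq:nlin-1}--\eqref{eq:nlin-2} and splits the resulting cubic integrals into a \emph{top-order factor} (either $|\partial_u\psi^k|$, $|\partial_v\psi^k|$, or $|\slashed\nabla\phi^k|$) and a \emph{low-order factor} with $\le n-6$ derivatives. The plan is that the top-order factor is absorbed into $\mathcal X^{\tau_f}_{p,k}$ via Cauchy--Schwarz, while the low-order factor is bounded pointwise using \eqref{eq:boot-pw-1}--\eqref{eq:boot-pw-5} (in some terms) and integrated in $\tau$ using a dyadic $L^1L^\infty$ bound. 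After collecting the resulting estimates and applying the standard Dafermos--Rodnianski pigeonhole argument to the hierarchies, one upgrades the energies $\mathcal X^{\tau_f}_{p,k}$ from boundedness (at top $p$) to the advertised decay rates at $p=0$.

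The order of the argument matters, and the plan is as follows. First, combining \cref{prop:removing-trapping} with the $k=n-2$ bootstrap \eqref{eq:boot-energy-1} at $p=\delta$ gives nondegenerate control of the order-$(n-3)$ integrated energy modulo $\mathbb E_{\mathrm{trap},n-3}(\mathcal R)$; since this error is localized in a compact range of $r$, it can be handled cheaply using the pointwise bootstraps \eqref{eq:boot-pw-1}--\eqref{eq:boot-pw-5} together with $|\mathcal N|\les|\phi|^2+|\partial\phi|^2$, yielding the improvement of \eqref{eq:boot-bulk-lower-order}. Second, using the improved \eqref{eq:boot-bulk-lower-order}, one derives the $L^1_vL^\infty_{u,\omega}$ and $L^2_vL^\infty_{u,\omega}$ estimates \eqref{eq:L1-Linfty-intro}--\eqref{eq:L2-Linfty-intro} (and their outgoing/ingoing mirror versions) by applying the fundamental theorem of calculus in $u$ (or in $v$) together with the Sobolev inequality \eqref{eq:angular-Sobolev} and the wave equations \eqref{eq:wave-phi-uv}--\eqref{eq:wave-psi}. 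Third, these $L^1L^\infty$ estimates plug directly into each of the $\underline{\mathbb E}{}_{p,k}$ and $\mathbb E_{p,k}$ integrals at levels $k=n-2$ and $k=n-1$, which combined with the basic multiplier identities of \cref{sec:a-priori} and the usual pigeonhole give the improved constants in \eqref{eq:boot-energy-1}--\eqref{eq:boot-energy-2}.

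Fourth, for the topmost level $k=n$ the plan is to accept the mild growth $\tau_2^{4\delta}$ built into \eqref{eq:boot-energy-3}, which is forced by the anomalous $T$-energy error term (and also by the fact that only angular decay $\tau^{-1/2+\delta/2}$ can be extracted from the bootstraps without commuting with $Y$ or $r^2\partial_v$). The key maneuver is to insert the growing pointwise estimate \eqref{eq:intro-growth} for $(r-M)^{-q}\partial_u\phi^{n-6}$ in the near region into $\underline{\mathbb E}{}_{p,n}$, with $q$ chosen by interpolation so that the integrand lies exactly in the $(r-M)^{-p}$-hierarchy; here one must carefully balance the mild $\tau^{1/2+\delta/2}$ growth of the pointwise bound against the decay of $\mathcal X_{p,n}$ from the $\underline C$-flux. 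Finally, the pointwise estimates \eqref{eq:boot-pw-1}--\eqref{eq:boot-pw-5} themselves are improved by integrating the transport equations \eqref{eq:wave-lambda}--\eqref{eq:wave-nu} (method of characteristics) from the initial data, using the already-improved energy bounds together with Sobolev on $S^2$ to control the forcing; the vanishing of $(r-M)^{-3/2+\delta}\partial_u$ at $\mathcal H^+$ ensures that \eqref{eq:boot-pw-4} can indeed be propagated without a loss.

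The main obstacle I expect is the estimate of $\underline{\mathbb E}{}_{p,n}(\mathcal R)$ at $p$ close to $1+\delta$ and $k=n$, because there one cannot afford any pointwise $Y\phi^{n-6}$ boundedness, and must instead thread the needle with the interpolated growing bound on $(r-M)^{-q}\partial_u\phi^{n-6}$ so that the resulting exponent of $\tau$ matches $\max\{0,-1+3\delta+p\}$ in \eqref{eq:boot-energy-3}; this is exactly the place where the auxiliary parameter $\delta$ is used up. A secondary obstacle is the anomalous contribution to the $T$-energy error in the medium-$r$ region (the $\mathbb E(\partial_v,\mathrm{i},k,\mathcal R)$ term alluded to in \cref{sec:T-error}), which is what prevents one from pushing $p$ all the way down to $\delta$ at top order and hence necessitates the hierarchy up to $p=1+\delta$ at $k=n$. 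Once these two terms are closed, the remaining estimates (trapping via angular commutation as in the original \cite{dafermos2009red}, cubic errors $f(x,\phi)\phi\cdot\phi\cdot\cdots$ in the medium region, and the $\mathbb E_{Z,k}$ error via Hardy) are straightforward applications of Cauchy--Schwarz and the bootstraps.
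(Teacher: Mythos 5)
Your proposal is correct and follows essentially the same route as the paper: estimate the cubic error terms using the bootstrap assumptions, the $L^1L^\infty/L^2L^\infty$ estimates, the dyadic trapping argument, and the interpolated $(r-M)$-weighted pointwise bound for $\partial_u\phi^{n-6}$, then close the energy hierarchies by the Dafermos--Rodnianski pigeonhole argument and recover the pointwise bootstraps via the method of characteristics. One small correction: the anomalous term $\mathbb E(\partial_v,\mathrm{i},k,\mathcal R)$ you single out lives in the near-horizon region $\mathcal A$ (coming from the $\mathbf 1_{\{r\le r_0\}}$ part of $\mathfrak N^k_{\mathrm i}$), not the medium-$r$ region; it is the $(r-M)^{-1-\delta}|\partial_u\phi^k|^2$ factor produced by Cauchy--Schwarz there that requires $\underline{\mathcal E}{}_{1+\delta,k}$ and hence forces the hierarchy up to $p=1+\delta$ at top order.
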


We conclude this section with the following immediate consequence of the bootstrap assumption \eqref{eq:boot-energy-1}:

\begin{lem}\label{prop:pointwise-1}
 For any $A\ge 1$,  $\ve_0$ sufficiently small, and $\tau_f\in \mathfrak B$, it holds that
  \begin{align}\label{eq:psi-pw}
      |\phi^{n-4}|&\les \ve r^{-1}\tau^{-1/2+\delta/2},\\
      |\slashed\nabla\phi^{n-5}|&\les \ve r^{-2}\tau^{-1/2+\delta/2}\label{eq:nabla-psi-pw}
  \end{align}
  on $\mathcal D^{\tau_f}$.
\end{lem}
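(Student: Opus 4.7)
The plan is to reduce the pointwise bound on $\phi^{n-4}$ to an $L^2$ estimate on a symmetry sphere via the angular Sobolev inequality \eqref{eq:angular-Sobolev}, then to control that sphere integral by applying the fundamental theorem of calculus in a null direction through the point and invoking the bootstrap flux bound \eqref{eq:boot-energy-1}. The bound on $|\slashed\nabla\phi^{n-5}|$ is an immediate consequence of the bound on $|\phi^{n-4}|$ via the commutator relation $|\slashed\nabla\phi^{n-5}|\les r^{-1}|\phi^{n-4}|$ from \eqref{eq:ang-1}, so it suffices to prove the bound on $|\phi^{n-4}|$.

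Fix $(u_0, v_0, \omega_0)\in\mathcal D^{\tau_f}$ and set $\tau_0\doteq\tau(u_0,v_0)$. Applying \eqref{eq:angular-Sobolev} to $\psi^{n-4}\doteq r\phi^{n-4}$ yields
\[
r_0^2\,\sup_{\omega\in S^2}|\phi^{n-4}(u_0,v_0,\omega)|^2 \les \int_{S^2}|\psi^{n-2}(u_0,v_0,\omega)|^2\,d\omega,
\]
so the task reduces to bounding the sphere integral on the right by $\ve^2\tau_0^{-1+\delta}$. In the far region $r_0\ge\Lambda$, I would apply the fundamental theorem of calculus to $\psi^{n-2}$ in $v$ along $\{u=u_0\}$ from $\Gamma(\tau_0)$ to $(u_0, v_0)$ and use Cauchy--Schwarz with an $r^{-p}/r^p$ weight split, for a parameter $p\in(1, 2-\delta]$ in the bootstrap range, which after integration over $\omega$ yields
\[
\int_{S^2}|\psi^{n-2}(u_0,v_0,\omega)|^2\,d\omega \les \int_{S^2}|\psi^{n-2}(\Gamma(\tau_0),\omega)|^2\,d\omega + \Big(\int_{v_\Gamma}^{v_0}r^{-p}\,dv\Big)\mathcal E_{p, n-2}(\tau_0).
\]
For $p>1$ the weight integral is uniformly bounded in $r_0$, and $\mathcal E_{p, n-2}(\tau_0)\les\ve^2\tau_0^{-2+\delta+p}$ by \eqref{eq:boot-energy-1}. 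In the near region $r_0\le\Lambda$, an analogous argument uses $\underline{\mathcal E}_{p, n-2}$ and FTC in $u$ along $\{v=v_0\}$ with a $(r-M)^p/(r-M)^{-p}$ weight split. The boundary term at $\Gamma(\tau_0)$ is controlled by a further FTC argument in the transverse null direction, reducing to the mean value theorem applied to the cone integral of $|\phi^{n-2}|^2$, which is bounded by $\mathcal E_{1, n-2}(\tau_0)\les\ve^2\tau_0^{-1+\delta}$ (since $|\phi^{n-2}|^2$ appears unweighted in $\mathcal E_{1, n-2}$, as $p_\star=0$ for $p=1$).

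The main technical obstacle is calibrating $p$ so that the resulting $\tau$-exponent matches the target $\tau^{-1+\delta}$ (for $|\phi^{n-4}|^2$) exactly. Taking $p=1+\delta$ directly yields a bound of the form $\ve r_0^{-1}\tau_0^{-1/2+\delta}$, which is off from the target $\tau_0^{-1/2+\delta/2}$ by $\tau_0^{\delta/2}$; this loss must be absorbed by an interpolation with the $p=0$ bound $\mathcal E_{0, n-2}(\tau_0)\les\ve^2\tau_0^{-2+\delta}$ applied together with FTC from null infinity (using $\phi^{n-2}\to 0$ at $\mathcal I^+$, justified by finiteness of $\mathcal E_{0, n-2}$), via a case distinction between the regimes $r_0\le\tau_0$ and $r_0\ge\tau_0$. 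This is a standard Dafermos--Rodnianski-type pointwise decay argument.
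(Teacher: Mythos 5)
Your high-level plan (angular Sobolev to reduce to $\int_{S^2}|\psi^{n-2}|^2$, then FTC along a null cone, then Cauchy--Schwarz against the bootstrap fluxes) is the right skeleton, and is indeed what the paper does by citing \cite[(7.13)]{AKU24}. But the specific Cauchy--Schwarz you perform loses the exponent, and your proposed remedy does not recover it. When you write
\[
\int_{S^2}|\psi^{n-2}(u_0,v_0)|^2 \lesssim \int_{S^2}|\psi^{n-2}|^2\big|_{\Gamma(\tau_0)} + \Big(\int_{v_\Gamma}^{v_0}r^{-p}\,dv\Big)\,\mathcal E_{p,n-2}(\tau_0),
\]
you have put the $\partial_v\psi^{n-2}$ factor into a flux but kept the $\psi^{n-2}$ factor as a sup (absorbed by a Gr\"onwall/Young step). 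With $p=1+\delta$ the weight integral is $O(1)$ but $\mathcal E_{1+\delta,n-2}(\tau_0)\lesssim\ve^2\tau_0^{-1+2\delta}$, which, as you note, overshoots by $\tau_0^{\delta/2}$. The case distinction you propose fixes this only for $r_0\gtrsim\tau_0$: FTC from $\mathcal I^+$ with the $r^{-1-\delta}/r^{1+\delta}$ split yields the extra factor $r_0^{-\delta}\le\tau_0^{-\delta}$. But for $r_0\lesssim\tau_0$ (the generic region near timelike infinity) the weight integral $\int_{v_\Gamma}^{v_0}r^{-1-\delta}\,dv$ is just $O(1)$ and gives no gain, and the $p=0$ flux alone cannot produce a pointwise bound from $\mathcal I^+$ without a positive $r$-weight. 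So the $\tau_0^{\delta/2}$ loss is not removed in that regime.

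The fix (and what the paper's cited argument actually does) is to Cauchy--Schwarz so that \emph{both} factors land in energy fluxes, exploiting that the zeroth-order term $c_{p_\star}r^{p_\star}|\phi^k|^2$ (resp.\ $c_{p_\star}(r-M)^{-p_\star+2}|\phi^k|^2$) in the energy definitions is placed there precisely for this purpose. For example, in the far region,
\[
2\int_{v_\Gamma}^{v_0}\int_{S^2}|\psi^{n-2}||\partial_v\psi^{n-2}|
\le 2\Big(\int r^{1/2}|\phi^{n-2}|^2\Big)^{1/2}\Big(\int r^{3/2}|\partial_v\psi^{n-2}|^2\Big)^{1/2}
\lesssim \big(\mathcal E_{1/2,n-2}\big)^{1/2}\big(\mathcal E_{3/2,n-2}\big)^{1/2}
\lesssim \ve^2\tau_0^{-1+\delta},
\]
and the analogous estimate with $\underline{\mathcal E}_{1/2,n-2},\underline{\mathcal E}_{3/2,n-2}$ in the near region, with no $\delta/2$ loss and no case distinction. (Any pair $p, 2-p$ with $p\in[\delta,1)$ works here; the paper uses $p=1/2$.) Your observation that $\mathcal E_{1,n-2}\lesssim\ve^2\tau_0^{-1+\delta}$ controls the unweighted $\int|\phi^{n-2}|^2$ term is correct, and would be useful for the boundary term at $\Gamma$, but it cannot substitute for the two-sided split in the bulk integral.
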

\begin{proof} Arguing as in the proof of the estimate (7.13) in \cite[Proposition 7.4]{AKU24}, using also the angular Sobolev inequality \eqref{eq:angular-Sobolev} and the bootstrap assumption \eqref{eq:boot-energy-1}, we find\footnote{Note the worse power in \eqref{eq:psi-pw} compared to \cite[(7.13)]{AKU24}. This is because we can only take $p$ up to $2-\delta$ here, which limits the decay of the energies.}
\begin{equation*}
    |\psi^{n-4}|^2\les \big(\underline{\mathcal E}{}_{1/2,n-2}^{\tau_f}(\tau)\big)^{1/2}\big(\underline{\mathcal E}{}_{3/2,n-2}^{\tau_f}(\tau)\big)^{1/2}+\big({\mathcal E}{}_{1/2,n-2}^{\tau_f}(\tau)\big)^{1/2}\big({\mathcal E}{}_{3/2,n-2}^{\tau_f}(\tau)\big)^{1/2}\les \ve^2 \tau^{-1+\delta}
\end{equation*} Then \eqref{eq:nabla-psi-pw} follows immediately from \eqref{eq:ang-1}.
\end{proof}

\subsection{Structure of the nonlinear error terms}\label{sec:structure}

With our bootstrap assumptions at hand, we can now group the nonlinear terms according to their most important structural features. 

\begin{prop}\label{prop:structure} Let $\mathcal N$ be a nonlinearity satisfying the hypotheses of \cref{def:null-cond}. For any $n\ge 12$, $k\le n$, $A\ge 1$,  $\ve_0$ sufficiently small, and $\tau_f\in \mathfrak B$, it holds that
\begin{equation*}
    D|\mathcal N^k|\les |\mathfrak N^k_*| + |\mathfrak N^k_\mathrm{i}|+|\mathfrak N^k_\mathrm{ii}|+|\mathfrak N^k_\mathrm{iii}|+|\mathfrak N^k_\mathrm{iv}|+|\mathfrak N_\mathrm{v}^k|, 
\end{equation*} on $\mathcal D^{\tau_f}$, where
\begin{gather*}
         |\mathfrak N^k_*| \doteq  \mathbf 1_{\{r_0\le r\le \Lambda\}}|\partial^{\le 1}\phi^{n-6}||\partial^{\le 1}\phi^k|,\qquad
            |\mathfrak N^k_\mathrm{i}|\doteq \mathbf 1_{\{r\le r_0,r\ge \Lambda\}}|\partial_v\phi^{n-6}||\partial_u\phi^k|, \\
     |\mathfrak N^k_\mathrm{ii}|\doteq  \mathbf 1_{\{r\le r_0,r\ge \Lambda\}}|\partial_u\phi^{n-6}||\partial_v\phi^k|,\qquad
     |\mathfrak N^k_\mathrm{iii}|\doteq  \mathbf 1_{\{r\le r_0,r\ge \Lambda\}}\ve\tau^{-1/2+\delta/2}Dr^{-2}|\slashed\nabla\phi^k|,\\
     |\mathfrak N^k_\mathrm{iv}|\doteq  \mathbf 1_{\{r\le r_0,r\ge \Lambda\}} |\partial_u\phi^{n-6}||\partial_v\phi^{n-6}||\phi^{k}|,\qquad |\mathfrak N_\mathrm{v}^k|\doteq  \mathbf 1_{\{r\le r_0,r\ge \Lambda\}}\ve^2 \tau^{-1+\delta}Dr^{-4}|\phi^k|, 
\end{gather*} where $|\partial^{\le 1}\phi^k|\doteq |\phi^k| + |\partial_u\phi^k|+|\partial_v\phi^k|+|\slashed\nabla\phi^k|$.
At low orders, we will use instead
\begin{equation}\label{eq:N-near}
    D|\mathcal N^{n-6}|\les |\partial_u\phi^{n-6}||\partial_v\phi^{n-6}| + D|\slashed\nabla\phi^{n-6}|^2+ \mathbf 1_{\{r_0\le r\le \Lambda\}}|\partial^{\le 1}\phi^{n-6}|^2,
\end{equation}
i.e., we ignore the factors of $f$ in \eqref{eq:nlin-1} and \eqref{eq:nlin-2}.
\end{prop}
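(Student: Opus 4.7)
The plan is to directly expand $\Gamma^\k \mathcal N$ via the Leibniz rule, apply a simple pigeonhole on the resulting multi-indices, and use the pointwise bounds from \cref{prop:pointwise-1} and \eqref{eq:boot-pw-1}--\eqref{eq:boot-pw-5} to absorb any ``low-order'' factor. Before doing so, I would record two structural facts. First, in the $(u,v,\vartheta,\varphi)$-coordinate chart the vector fields $\partial_u,\partial_v$ are independent of the angular variables and the rotation fields $\Gamma_i$ ($i\ge 1$) act only angularly; hence $[\Gamma_i,\partial_u]=[\Gamma_i,\partial_v]=0$ and $[\Gamma_i,\Gamma_j]=\epsilon_{ij}{}^k\Gamma_k$. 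In particular, for any $X\in\{\partial_u,\partial_v,\Gamma_1,\Gamma_2,\Gamma_3\}$ and any multi-index $\k$, $|\Gamma^\k X\phi|\les |X\phi^{|\k|}|$ up to a harmless reshuffling of rotation indices. Second, by iterating the chain rule and invoking \eqref{eq:f-assn} on the set $\{|\phi|\les 1\}$, which holds on $\mathcal D^{\tau_f}$ by \cref{prop:pointwise-1} and taking $\ve_0$ small, we obtain the schematic bound
\[
    |\Gamma^{\k_1}f(x,\phi)|\les 1+|\phi^{|\k_1|}|,
\]
where higher-degree polynomial terms in lower-order $|\phi^j|$ have been absorbed via their pointwise smallness.

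The pigeonhole I would exploit is that whenever $|\k_1|+|\k_2|+|\k_3|\le k$ with $k\le n$ and $n\ge 12$, at least one of $|\k_2|,|\k_3|$ is $\le n-6$: indeed if both were $\ge n-5$, then $|\k_2|+|\k_3|\ge 2n-10\ge n+2$, contradicting $|\k_2|+|\k_3|\le n$. This ensures that in every Leibniz term at least one of the two $\phi$-derivative factors admits a pointwise bound from the bootstrap.

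The remainder is a routine case analysis. For a term of type \eqref{eq:nlin-1} in $\{r\le r_0\}\cup\{r\ge\Lambda\}$, the $D^{-1}$ in the nonlinearity cancels against the $D$ in $D|\mathcal N^k|$, and the Leibniz expansion reduces the problem to controlling $|\Gamma^{\k_1}f|\,|\partial_u\phi^{|\k_2|}|\,|\partial_v\phi^{|\k_3|}|$. When $|\k_1|\le n-4$, $|\Gamma^{\k_1}f|\les 1$, and placing the low-order derivative factor (chosen by pigeonhole) on either $\partial_u\phi^{n-6}$ or $\partial_v\phi^{n-6}$ matches $|\mathfrak N^k_\mathrm{ii}|$ or $|\mathfrak N^k_\mathrm{i}|$ respectively. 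When $|\k_1|\ge n-3$ we have $|\k_2|,|\k_3|\le 3\le n-6$, so both derivative factors are pointwise small, producing the cubic term $|\phi^k||\partial_u\phi^{n-6}||\partial_v\phi^{n-6}|$, i.e., $|\mathfrak N^k_\mathrm{iv}|$. For a term of type \eqref{eq:nlin-2}, using \eqref{eq:ang-1} to rewrite $|\Gamma_i\phi|\les r|\slashed\nabla\phi|$, the analogous split produces $|\mathfrak N^k_\mathrm{iii}|$ (when $|\k_1|\le n-4$, after absorbing \eqref{eq:nabla-psi-pw} on the low-order angular factor and using $r^{-4}\les r^{-2}$ in the far region to match the stated weight) and $|\mathfrak N^k_\mathrm{v}|$ (when $|\k_1|\ge n-3$). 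In the intermediate region $\{r_0\le r\le r_1\}\subset\{r_0\le r\le\Lambda\}$, where $D$ is bounded below and above, the three structures in \eqref{eq:nlin-0} expand to sums dominated by $|\mathfrak N^k_*|$ by the same argument.

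Finally, the low-order bound \eqref{eq:N-near} is immediate: for $k\le n-6$ every factor in $|\Gamma^\k\mathcal N|$ is pointwise bounded, and one simply reads off the quadratic structure of \eqref{eq:nlin-1} (contributing $|\partial_u\phi^{n-6}||\partial_v\phi^{n-6}|$), \eqref{eq:nlin-2} (contributing $D|\slashed\nabla\phi^{n-6}|^2$ after the $r^{-2}\cdot r^2$ cancellation), and the intermediate region (contributing $\mathbf 1_{\{r_0\le r\le\Lambda\}}|\partial^{\le 1}\phi^{n-6}|^2$); no cubic $\mathfrak N^k_\mathrm{iv}$ or $\mathfrak N^k_\mathrm{v}$ contributions arise since $|\Gamma^{\k_1}f|$ is already pointwise bounded at this order. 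The only real bookkeeping obstacle is the bifurcation based on whether most commutations fall on $f$ (producing the cubic contributions $\mathfrak N^k_\mathrm{iv}$, $\mathfrak N^k_\mathrm{v}$) or on the derivatives of $\phi$ (producing the quadratic contributions $\mathfrak N^k_\mathrm{i}$, $\mathfrak N^k_\mathrm{ii}$, $\mathfrak N^k_\mathrm{iii}$); once one commits to that split, the algebra is routine.
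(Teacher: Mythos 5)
Your proof follows essentially the same approach as the paper's: a Leibniz/chain-rule expansion of $\Gamma^\k\mathcal N$, a pigeonhole argument guaranteeing that at most one factor carries more than $n-6$ commutations (valid since $n\ge 12$), and the pointwise bounds from \cref{prop:pointwise-1} and \eqref{eq:boot-pw-1}--\eqref{eq:boot-pw-5} to absorb the low-order factors. Two harmless slips worth flagging: the aside ``using $r^{-4}\les r^{-2}$'' is not actually needed for $\mathfrak N^k_\mathrm{iii}$, since after writing $r^{-2}|\Gamma_i\phi^{n-6}||\Gamma_j\phi^k|=|\slashed\nabla\phi^{n-6}||\slashed\nabla\phi^k|$ and inserting \eqref{eq:nabla-psi-pw} the $r$-weight already matches; and your intermediate-region discussion should explicitly cover $r_1\le r\le\Lambda$ as well, where $\mathcal N$ still has the forms \eqref{eq:nlin-1}--\eqref{eq:nlin-2} but $D$ and $r$ are bounded above and below, so these terms are again dominated by $|\mathfrak N^k_*|$.
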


\begin{rk}
    While these estimates are clearly somewhat wasteful (for instance, if we commute $n-6$ times then a term like $\partial_u\phi^{n-6}\partial_v\phi^{n-6}$ cannot actually appear), this crude counting scheme suffices to close the bootstrap argument. 
\end{rk}

\begin{proof} We first consider the regions $r\le r_0$ and $r\ge \Lambda$. For nonlinearities of type \eqref{eq:nlin-1}, we iterate the product and chain rules and use \eqref{eq:f-assn} to estimate
\begin{equation*}
    D|\mathcal N^k|\les \sum_{\substack{k_1+k_2+k_3\le k \\ l_1m_1\le k_1}}|\phi^{l_1}|^{m_1}|\partial_u\phi^{k_2}||\partial_v\phi^{k_3}|.
\end{equation*}
Since $n\ge 12$, there can be at most one factor with $\ge n-5$ commutations, which means that 
\begin{equation*}
    \sum_{\substack{k_1+k_2+k_3\le k \\ l_1m_1\le k_1}}|\phi^{l_1}|^{m_1}|\partial_u\phi^{k_2}||\partial_v\phi^{k_3}| \les |\mathfrak N^k_\mathrm{i}|+|\mathfrak N^k_\mathrm{ii}|+|\mathfrak N^k_\mathrm{iv}|,
\end{equation*} where we used the bootstrap assumption \eqref{eq:psi-pw} to estimate all factors of $\phi$ with $\le n-6$ commutations by $\les 1$. For nonlinearities of type \eqref{eq:nlin-2}, by the same argument, we have
\begin{equation*}
    D|\mathcal N^k|\les \sum_{\substack{k_1+k_2+k_3\le k \\ l_1m_1\le k_1}}|\phi^{l_1}|^{m_1}|\slashed\nabla\phi^{k_2}||\slashed\nabla\phi^{k_3}|\les |\mathfrak N^k_\mathrm{iii}| + D|\slashed\nabla\phi^{n-6}|^2|\phi^k|\les |\mathfrak N^k_\mathrm{iii}| + |\mathfrak N^k_\mathrm{v}|,
\end{equation*}
where we used the bootstrap assumption \eqref{eq:nabla-psi-pw} to estimate $|\slashed\nabla\phi^{n-6}|^2$. Note that when $k=n-6$, we simply have 
\begin{equation*}
    |\mathfrak N^{n-6}_\mathrm{iii}| + D|\slashed\nabla\phi^{n-6}|^2|\phi^{n-6}|\les  D|\slashed\nabla\phi^{n-6}|^2,
\end{equation*}
which is what is used in \eqref{eq:N-near}. 

When $r_0\le r\le \Lambda$, each term in $\mathcal N$ is of the form \eqref{eq:nlin-0} and so we clearly have $|\mathcal N^k|\les |\partial^{\le 1}\phi^{n-6}||\partial^{\le 1}\phi^k|$ since $n\ge 12$. \end{proof}

To make the error estimates below more systematic, we define the following bulk error integrals, where $\mathrm j\in\{\mathrm{i},\mathrm{ii},\mathrm{iii},\mathrm{iv},\mathrm{v}\}$:
\begin{gather*}
    \mathbb E(Z,\mathrm{j},k,\mathcal R)\doteq \iint_{\mathcal R}rD|\phi^k||\mathfrak N^k_\mathrm{j}|\,d\omega dudv,\quad  \mathbb E(\partial_u,\mathrm{j},k,\mathcal R)\doteq    \iint_{\mathcal R} r^2|\partial_u\phi^k||\mathfrak N^k_\mathrm{j}|\,d\omega du dv,\\
     \mathbb E(\partial_v,\mathrm{j},k,\mathcal R)\doteq   \iint_{\mathcal R} r^2|\partial_v\phi^k||\mathfrak N^k_\mathrm{j}|\,d\omega du dv,\quad   \mathbb E((r-M)^{-p}\partial_u,\mathrm{j},k,\mathcal R)\doteq  \iint_{\mathcal R_{\le\Lambda}}(\bar r-M)^{-p}|\partial_u\psi^k||\mathfrak N^k_\mathrm{j}|\,d\omega dudv,\\
     \mathbb E(r^p\partial_v,\mathrm{j},k,\mathcal R) \doteq     \iint_{\mathcal R_{\ge\Lambda}} r^{p+1}|\partial_v\psi^k|| \mathfrak N^k_\mathrm{j}|\,d\omega dudv.
\end{gather*}

\subsection{\texorpdfstring{$L^1L^\infty$}{L 1 L infty} and \texorpdfstring{$L^2L^\infty$}{L 2 L infty} estimates}\label{sec:L1-L2}

Before we state and prove our fundamental $L^1L^\infty$ and $L^2L^\infty$ estimates, we prove a useful integrated decay estimate for $|\phi^{n-4}|^2$ and $|\partial\phi^{n-4}|^2$ along timelike hypersurfaces of constant area-radius.  

\begin{lem} \label{lem:little-bit-of-averaging}
   For any $A\ge 1$,  $\ve_0$ sufficiently small, $\tau_f\in \mathfrak B$, $\tau_1\in [1,\tau_f]$, and $R_0>M$, it holds that 
     \begin{equation}\label{eq:curve-integral}
         \int_{\{r=R_0\}\cap\{\tau\ge \tau_1\}}\big(|\partial_u\phi^{n-4}|^2+|\partial_v\phi^{n-4}|^2+|\phi^{n-4}|^2\big) \,d\omega d\tau\les_{R_0}  \ve^{3/2}\tau_1^{-2+\delta}.
\end{equation}
\end{lem}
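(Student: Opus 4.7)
The strategy is to reduce the curve integral on $\{r=R_0\}$ to a spacetime bulk integral over a thin tubular neighborhood, and then apply the bootstrap control on $\mathcal Y_{n-3}$. Fix $\eta = (R_0-M)/4$ so that $\mathcal U \doteq \mathcal B_{R_0-\eta,R_0+\eta}\cap\mathcal D$ stays strictly in the domain of outer communication; on $\mathcal U$, both $D=(1-M/r)^2$ and $r$ are bounded above and below by constants depending only on $R_0$. Working in ingoing Eddington--Finkelstein coordinates $(v,r,\omega)$, the fundamental theorem of calculus in $r$ combined with averaging over $r\in[R_0,R_0+\eta]$ yields the trace-type inequality
\[
\int_{\{r=R_0\}\cap\{\tau\ge\tau_1\}} g^2\,d\omega d\tau \;\lesssim_{R_0}\; \iint_{\mathcal U\cap\{\tau\gtrsim\tau_1\}}\bigl(g^2+|\partial_u g|^2+|\partial_v g|^2\bigr)\,d\omega du dv
\]
for any smooth $g$, using that $\partial_r\sim_{R_0}\partial_u$ (or $\partial_v$) in these coordinates and that the Jacobian between $dr\,dv\,d\omega$ and $du\,dv\,d\omega$ costs only $R_0$-dependent constants.

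I plan to apply this inequality with $g\in\{\phi^{n-4},\,\partial_u\phi^{n-4},\,\partial_v\phi^{n-4}\}$. For $g=\phi^{n-4}$, the right-hand side is immediately dominated by the spacetime integral of $|\phi^{n-4}|^2+|\partial\phi^{n-4}|^2$. For $g$ a first derivative of $\phi^{n-4}$, the nontrivial terms $|\partial_u g|^2,|\partial_v g|^2$ involve second derivatives of $\phi^{n-4}$. To deal with the pure second derivatives, I use $\partial_u+\partial_v=2T$ to write, e.g., $\partial_u^2\phi^{n-4}=2\partial_u T\phi^{n-4}-\partial_u\partial_v\phi^{n-4}$; since $T$ is a commutation field, $|\partial_u T\phi^{n-4}|\le|\partial\phi^{n-3}|$ for free. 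The remaining mixed derivative $\partial_u\partial_v\phi^{n-4}$ is replaced using the wave equation \eqref{eq:wave-phi-uv}: on $\mathcal U$, the three resulting terms $2r^{-1}DR\phi^{n-4}$, $D\slashed\Delta\phi^{n-4}$ and $D\mathcal N^{n-4}$ are controlled pointwise by $|\partial\phi^{n-4}|$, $|\partial\phi^{n-3}|$ (via \eqref{eq:ang-2}), and $\ve^{1/2}|\partial\phi^{n-4}|$ (via the structural decomposition of \cref{prop:structure} and the pointwise bootstraps \eqref{eq:boot-pw-1}--\eqref{eq:boot-pw-5}).

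Collecting contributions, the curve integral is bounded by the spacetime integral of $|\phi^{n-3}|^2+|\partial\phi^{n-3}|^2$ over $\mathcal U\cap\{\tau\gtrsim\tau_1\}$, plus an $\ve^{1/2}$-small correction absorbable for $\ve_0$ small. Because $\mathcal U$ lies in a bounded-$r$ region where the Morawetz weights are nondegenerate and uniformly bounded, this spacetime integral is controlled by $\mathcal Y_{n-3}^{\tau_f}(\tau_1,\infty)$ when $R_0\le\Lambda$ (via the $\widetilde{\underline{\mathcal M}}_{-1,n-3}$-integrand, which is nondegenerate on any neighborhood of $\{r=R_0\}$ disjoint from $\mathcal H^+$ and the photon sphere) and by $\mathcal X_{0,n-3}^{\tau_f}(\tau_1,\infty)$ when $R_0>\Lambda$ (via the far-region Morawetz $\mathcal M_{-1,n-3}$). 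The bootstraps \eqref{eq:boot-bulk-lower-order} and \eqref{eq:boot-energy-1} then yield the desired bound $\lesssim_{R_0}\ve^{3/2}\tau_1^{-2+\delta}$, using $\ve^2\le\ve^{3/2}$ in the far-region case. The main technical obstacle is the manipulation of the second derivatives via commutation and the wave equation, which is precisely what forces the loss of one order of regularity, from $\phi^{n-4}$ on the left-hand side to $\phi^{n-3}$ on the right-hand side; the remainder of the argument is routine.
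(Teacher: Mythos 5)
Your proposal is correct and works; it takes a somewhat different route than the paper, and the comparison is worth noting.

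The paper argues by a one-sided fundamental theorem of calculus: for the case $r<\Lambda$ and $g=\partial_u\phi^{n-4}$, it writes $g^2(u,v)$ as $g^2(u,v_0)$ plus the $v$-integral of $\partial_v(g^2)=2g\,\partial_v g$, with $v_0=0$ (the initial data surface) when $\tau_1$ is bounded, or $v_0$ ranging over an earlier dyadic slab (an averaged term) when $\tau_1\ge 2$. The mixed derivative $\partial_v\partial_u\phi^{n-4}$ is then eliminated by the wave equation \eqref{eq:wave-phi-uv} and the resulting bilinear bulk is controlled by \eqref{eq:boot-bulk-lower-order} after Cauchy--Schwarz. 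You instead use a two-sided trace inequality in the radial direction on a tube $\mathcal U$ of bounded thickness around $\{r=R_0\}$, which directly produces a bulk integral of $|\partial\phi^{n-4}|^2+|\partial^2\phi^{n-4}|^2$; the pure second derivatives are handled by $T$-commutation and the wave equation exactly as in the paper, so both approaches pay the same one-commutation price and land on the same $\mathcal Y_{n-3}$ (respectively $\mathcal X_{0,n-3}$ for $R_0>\Lambda$) bootstrap. Your version is arguably cleaner: it produces sums of squares directly (no Cauchy--Schwarz on bilinears is needed), it treats $\phi^{n-4}$, $\partial_u\phi^{n-4}$, $\partial_v\phi^{n-4}$ and the near/far regions on an equal footing, and it does not need to separate out a boundary term on $\Sigma_0$. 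What it gives up is the explicit dependence on the initial data norm that appears in the paper's $\tau\in[1,2]$ step, but since the bootstrap on $\mathcal Y_{n-3}$ already holds down to $\tau_1=1$ this is immaterial. One small point you should make explicit: when $\tau$ is near $1$ the tube must still fit inside $\mathcal D$; this does hold here because for $R_0<\Lambda$ averaging in $r$ at fixed $v$ moves in $u$ and stays in $\{u\ge 0\}$, and analogously for $R_0>\Lambda$, but it is a geometric fact worth a sentence.
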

\begin{proof}
    We give the proof only for $\partial_u\phi^{n-4}$ and in the case when $r<\Lambda$. The other cases only require small modifications to this argument.
    
    We first prove that
    \begin{equation*}
        \int_{\{r=R_0\}\cap\{1\le\tau\le 2\}}|\partial_u\phi^{n-4}|^2\,d\omega ds\les \ve^{3/2}.
    \end{equation*}
    Given $(u,v)\in \mathcal R_{\le\Lambda}$ with $\tau(u,v)\in [1,2]$, we use the fundamental theorem of calculus and the wave equation \eqref{eq:wave-phi-uv} to write
    \begin{equation*}
        |\partial_u\phi^{n-4}|^2(u,v,\omega) \les  |\partial_u\phi^{n-4}|^2(u,0,\omega) +\int_0^v \partial_u\phi^{n-4}\big(|\partial_u\phi^{n-4}|+|\partial_v\phi^{n-4}|+|\slashed\Delta\phi^{n-4}|+|\mathcal N^{n-4}|\big)\,dv'.
    \end{equation*}
    After integrating in $u$ and $\omega$ (note that $du\sim d\tau$ along $\{ r=R_0\}$ since $R_0>M$), the first term on the right-hand side is $\les \ve^2_0$ by the assumptions on initial data, and each of the quadratic terms are $\les \ve^{3/2}$ by \eqref{eq:boot-bulk-lower-order} and turning $\slashed\Delta\phi^{n-4}$ into $\slashed\nabla\phi^{n-3}$. Finally, to handle the nonlinear term, we use \cref{prop:structure} and the pointwise bootstrap assumptions to reduce again to the quadratic case. 

    We now assume that $\tau_1\ge 2$ and prove \eqref{eq:curve-integral} in this case. Let $\chi:\Bbb R_\tau \to[0,1]$ be a cutoff function such that $\chi(\tau)= 1$ for $\tau\ge \tau_1$, and $\chi(\tau)=0$ for $\tau\le \tau_1-1$. Note that $\partial_v\tau\sim 1$ in $\mathcal R_{\le\Lambda}$. Using the fundamental theorem of calculus again, we have
    \begin{equation*}
        |\partial_u\phi^{n-4}|^2(u,v,\omega) \les \int_{\tilde v}^{\bar v}|\partial_u\phi^{n-4}|^2 dv'+  \int_{\tilde v}^v \chi \partial_u\phi^{n-4}\big(|\partial_u\phi^{n-4}|+|\partial_v\phi^{n-4}|+|\slashed\Delta\phi^{n-4}|+|\mathcal N^{n-4}|\big)\,dv',
    \end{equation*}
    where $\tau(u,\bar v)=\tau_1$ and $\tau(u,\tilde v)=\tau_1-1$. After integrating in $u$ and $\omega$, and using \eqref{eq:boot-bulk-lower-order} again, each term is $\les \ve^{3/2} (\tau_1-1)^{-2+\delta}\sim \ve^{3/2}\tau_1^{-2+\delta}$, which completes the proof.
\end{proof}

\begin{prop}[$L^1_uL^\infty_{v,\omega}$ and $L^2_uL^\infty_{v,\omega}$ estimates]\label{prop:L1-Linfty}
   For any $A\ge 1$, $\ve_0$ sufficiently small, $\tau_f\in \mathfrak B$, and $\tau_1 \in [1,\tau_f]$, it holds that 
     \begin{align}
       \label{eq:partial-u-integrated}  \int_{\tau_1}^{\infty}\sup_{C^{\tau_f}(\tau)}|r\partial_u\phi^{n-6}|\,d\tau &\les \ve^{3/4}\tau_1^{-1/2+3\delta/4},\\
\label{eq:partial-u-integrated-2}
    \int_{\tau_1}^{\infty} \sup_{C^{\tau_f}(\tau)}|r\partial_u\phi^{n-6}|^2\,d\tau&\les \ve^{3/2} \tau_1^{-2+3\delta/2}.
    \end{align}
\end{prop}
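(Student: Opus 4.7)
The plan is to first establish \eqref{eq:partial-u-integrated-2} and then deduce \eqref{eq:partial-u-integrated} from it via a dyadic Cauchy--Schwarz argument. For the $L^2L^\infty$ bound, I begin by applying the angular Sobolev inequality \eqref{eq:angular-Sobolev} at each $(u,v)$ to replace $\sup_{v,\omega}|r\partial_u\phi^{n-6}|^2$ by $\sup_v\int_{S^2}|r\partial_u\phi^{n-4}|^2\,d\omega$. Denoting by $v_\tau$ the $v$-coordinate of the base of $C^{\tau_f}(\tau)$ on $\Gamma=\{r=\Lambda\}$, the fundamental theorem of calculus in $v$ yields
\begin{equation*}
\sup_v \int_{S^2} |r\partial_u\phi^{n-4}|^2 \,d\omega \les \int_{S^2}|r\partial_u\phi^{n-4}|^2(u,v_\tau,\omega)\,d\omega + \int_{v_\tau}^\infty\int_{S^2} |r\partial_u\phi^{n-4}|\,|\partial_v(r\partial_u\phi^{n-4})| \,d\omega\, dv.
\end{equation*}
Substituting the commuted wave equation \eqref{eq:wave-nu} and using \eqref{eq:ang-2} gives $|\partial_v(r\partial_u\phi^{n-4})|\les D|\partial_v\phi^{n-4}|+r^{-1}D|\slashed\nabla\phi^{n-3}|+rD|\mathcal N^{n-4}|$.

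Each of the resulting double integrals is then handled by Cauchy--Schwarz with $r$-weights chosen so that one factor is absorbed by a weighted bulk energy inside $\mathcal X_{p,n-4}^{\tau_f}$ and the other by an appropriate energy flux on $C^{\tau_f}(\tau)$. For example, splitting $r=r^{(1-\delta)/2}r^{(1+\delta)/2}$ reduces the first linear term to $(\mathcal M_{-1,n-4}^{\tau_f}(\tau))^{1/2}(\dot{\mathcal E}_{\delta-1,n-4}^{\tau_f}(\tau))^{1/2}$, both controlled by the bootstrap \eqref{eq:boot-energy-1}. The angular term is handled similarly; raising the commutation order from $n-4$ to $n-3$ is harmless since $n-3\le n-2$. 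For the nonlinear term I invoke \cref{prop:structure} to reduce $rD|\mathcal N^{n-4}|$ in $\{r\ge\Lambda\}$ to sums of strong-null-form expressions, each of which carries a factor of $\partial\phi^{n-6}$ or a small factor from \cref{prop:pointwise-1} that can be pointwise estimated by the bootstraps \eqref{eq:boot-pw-1}--\eqref{eq:boot-pw-5}; this is precisely where the extra $\ve^{1/2}$ enters, accounting for the power $\ve^{3/2}$ (instead of $\ve^2$) on the right-hand side of \eqref{eq:partial-u-integrated-2}.

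Now I integrate in $\tau$ from $\tau_1$ to $\infty$. The base term is controlled by \cref{lem:little-bit-of-averaging} at $R_0=\Lambda$, which contributes $\les \ve^{3/2}\tau_1^{-2+\delta}$. The bulk terms become spacetime integrals over $\mathcal R_{\ge\Lambda}\cap\{\tau\ge\tau_1\}$ dominated by $\mathcal X_{p,n-4}^{\tau_f}(\tau_1,\infty)$ for $p\in[\delta,2-\delta]$ of my choosing, and the bootstrap \eqref{eq:boot-energy-1} gives $\les \ve^2\tau_1^{-2+\delta+p}$; combining with the $\ve^{1/2}$ gained from the nonlinear/pointwise reduction and optimizing in $p$ yields \eqref{eq:partial-u-integrated-2} with exponent $-2+3\delta/2$. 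Finally, for \eqref{eq:partial-u-integrated}, I decompose $[\tau_1,\infty)=\bigcup_{j\ge 0}[2^j\tau_1,2^{j+1}\tau_1]$ and apply Cauchy--Schwarz on each dyadic block:
\begin{equation*}
\int_{2^j\tau_1}^{2^{j+1}\tau_1}\!\sup_{C^{\tau_f}(\tau)}|r\partial_u\phi^{n-6}|\,d\tau \les (2^j\tau_1)^{1/2}\,\ve^{3/4}(2^j\tau_1)^{-1+3\delta/4}=\ve^{3/4}(2^j\tau_1)^{-1/2+3\delta/4},
\end{equation*}
and summing the resulting geometric series in $j$ (which converges since $\delta<\tfrac{1}{100}$) gives \eqref{eq:partial-u-integrated}. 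The main obstacle is the bookkeeping of $r$-weights in the Cauchy--Schwarz steps so as to remain inside the $p$-range $[\delta,2-\delta]$ of the hierarchy, together with the fact that the angular-Laplacian term costs one extra commutation; this is precisely why the statement is formulated with $n-6$ on the left-hand side and the bootstrap hierarchy at order $n-2$ was set up to absorb these losses.
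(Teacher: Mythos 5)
Your proposal is correct and takes essentially the same route as the paper: angular Sobolev, FTC in $v$ from $\Gamma$, the commuted wave equation \eqref{eq:wave-nu}, reduction of $\mathcal N^{n-4}$ to quadratic form via the pointwise bootstraps, Cauchy--Schwarz with split $r$-weights against the $\mathcal X_{p,n-4}$ and $\mathcal X_{p,n-3}$ energies, \cref{lem:little-bit-of-averaging} for the base term on $\Gamma$, and a dyadic Cauchy--Schwarz to pass from $L^2_\tau$ to $L^1_\tau$. Two small bookkeeping notes: the angular term coming from $rD\slashed\Delta\phi^{n-4}$ is $D|\slashed\nabla\phi^{n-3}|$ (no extra $r^{-1}$, since $|\slashed\Delta f|\les r^{-1}|\slashed\nabla f^1|$), and the factor $\ve^{3/2}$ in \eqref{eq:partial-u-integrated-2} is not produced by the $\ve^{1/2}$ gain from the nonlinear reduction (which makes that contribution $\ve^{5/2}$, even smaller); rather it is dictated by the boundary term from \cref{lem:little-bit-of-averaging}, which carries $\ve^{3/2}\tau_1^{-2+\delta}$, while the bulk term contributes $\ve^2\tau_1^{-2+3\delta/2}$---taking the worse power of $\ve$ and the worse power of $\tau_1$ across the two gives the stated right-hand side.
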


\begin{proof} It suffices to prove \eqref{eq:partial-u-integrated} and \eqref{eq:partial-u-integrated-2} when $\tau_1=L_{i_0}$, where $i_0\in\Bbb N_0$ and $L_i\doteq 2^i$. By Cauchy--Schwarz, we have
    \begin{equation}\label{eq:L1-proof-1}
     \int_{L_{i_0}}^{\infty}\sup_{C^{\tau_f}(\tau)}|r\partial_u\phi^k|\,d\tau  \les \sum_{i\ge i_0}  \left(L_i\int_{L_i}^{L_{i+1}}\sup_{(v,\omega)\in[v^\Lambda(u),\Gamma^v(\tau_f)]\times S^2}|r\partial_u\phi^k|^2(\Gamma^u(\tau),v,\omega)\, d\tau\right)^{1/2}, 
    \end{equation} where $\tau\mapsto (\Gamma^u(\tau),\Gamma^v(\tau))$ is a parametrization of $\{r=\Lambda\}$ and $v^\Lambda(u)$ is defined implicitly by the relation $r(u,v^\Lambda(u))=\Lambda$. We use the fundamental theorem of calculus and the wave equation in the form \eqref{eq:wave-nu} to estimate
    \begin{equation*}
        |r\partial_u\phi^{n-6}|^2(u,v,\omega) \les |\partial_u\phi^{n-6}|^2(u,v^\Lambda(u),\omega)+\int_{v^\Lambda(u)}^{\Gamma^v(\tau_f)}|r\partial_u\phi^{n-6}|\big(|\partial_v\phi^{n-6}|+ r|\slashed\Delta\phi^{n-6}|+r|\mathcal N^{n-6}|\big)(u,v',\omega)\,dv'.
    \end{equation*}
Using \eqref{eq:boot-pw-1}, \eqref{eq:nabla-psi-pw}, and \eqref{eq:N-near}, we find
\begin{equation*}
    r^2 |\partial_u\phi^{n-6}||\mathcal N^{n-6}|\les \ve^{1/2}\big(r|\partial_u\phi^{n-6}||\partial_v\phi^{n-6}|+|\partial_u\phi^{n-6}||\slashed\nabla\phi^{n-6}|\big),
\end{equation*}
so that after integrating over $u\in [\Gamma^u(L_i),\Gamma^u(L_{i+1})]$, using the angular Sobolev inequality, \cref{lem:little-bit-of-averaging}, Cauchy--Schwarz, and the bootstrap assumption \eqref{eq:boot-energy-1}, we have
\begin{multline*}
       \int_{L_i}^{L_{i+1}}\sup_{C^{\tau_f}(\tau)}|r\partial_u\phi^{n-6}|^2\, d\tau\les \ve^{3/2}L_i^{-2+\delta}+ \int_{L_i}^{L_{i+1}}\int_{C_u^{\tau_f}}\big(r|\partial_u\phi^{n-4}||\partial_v\phi^{n-4}|+ r|\partial_u\phi^{n-4}||\slashed\nabla\phi^{n-3}|\big) \\
       \les \ve^{3/2}L_i^{-2+\delta}+\left(\int_{L_i}^{L_{i+1}}\int_{C_u^{\tau_f}}r^{1-\delta}|\partial_u\phi^{n-4}|^2\right)^{1/2}\left(\int_{L_i}^{L_{i+1}}\int_{C_u^{\tau_f}}r^{1+\delta}\big(|\partial_v\phi^{n-4}|^2+|\slashed\nabla\phi^{n-3}|^2\big)\right)^{1/2}\\
       \les \ve^{3/2}L_i^{-2+\delta} + \big(\mathcal X^{\tau_f}_{n-4,0}(L_i,L_{i+1})\big)^{1/2}\big(\mathcal X^{\tau_f}_{n-3,\delta}(L_i,L_{i+1})\big)^{1/2}\les \ve^{3/2}L_i^{-2+3\delta/2}.
\end{multline*}
Summing over $i\ge i_0$ yields \eqref{eq:partial-u-integrated-2} with $\tau_1=L_{i_0}$ and inserting this estimate into \eqref{eq:L1-proof-1} yields \eqref{eq:partial-u-integrated}.
  \end{proof}

\begin{prop}[$L^1_vL^\infty_{u,\omega}$ and $L^2_vL^\infty_{u,\omega}$ estimates]\label{prop:integrated-v-estimates}
    For any $A\ge 1$,  $\ve_0$ sufficiently small, $\tau_f\in \mathfrak B$, and $\tau_1 \in [1,\tau_f]$, it holds that 
\begin{align}
      \label{eq:partial-v-integrated}   \int_{\tau_1}^{\infty}\sup_{\underline C{}^{\tau_f}(\tau)}|\partial_v\phi^{n-6}|\,d\tau &\les \ve^{3/4}\tau_1^{-1/2+3\delta/4},\\
     \label{eq:partial-v-integrated-2}
         \int_{\tau_1}^{\infty}\sup_{\underline C{}^{\tau_f}(\tau)}|\partial_v\phi^{n-6}|^2\,d\tau &\les \ve^{3/2}\tau^{-2+3\delta/2}_1.
\end{align}
\end{prop}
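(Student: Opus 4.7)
The plan is to mirror the argument of \cref{prop:L1-Linfty}, swapping the roles of $u$ and $v$ and replacing $r$-weights in the far region by $(r-M)$-weights in the near region. As there, I first reduce to $\tau_1 = L_{i_0}$ with $L_i \doteq 2^i$ and observe that by Cauchy--Schwarz on dyadic intervals, \eqref{eq:partial-v-integrated} follows from \eqref{eq:partial-v-integrated-2}; hence I focus on the $L^2_v$ estimate.

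For a point $(u,\Gamma^v(\tau),\omega)\in\underline C{}^{\tau_f}(\tau)$, I integrate $\partial_u|\partial_v\phi^{n-4}|^2$ along the ingoing cone $v=\Gamma^v(\tau)$ from its intersection with $\Gamma$ (at $u=\Gamma^u(\tau)$) out toward the horizon, using the wave equation \eqref{eq:wave-phi-uv} together with \eqref{eq:ang-2}. After applying the angular Sobolev inequality \eqref{eq:angular-Sobolev}, integrating in $\tau\in[L_i,L_{i+1}]$, and using \cref{lem:little-bit-of-averaging} at $r=\Lambda$ to bound the boundary contribution by $\les\ve^{3/2}L_i^{-2+\delta}$, one is reduced to estimating a spacetime integral of the form
\begin{equation*}
    \int_{L_i}^{L_{i+1}}\int_{\underline C{}^{\tau_f}(\tau)} D|\partial_v\phi^{n-4}|\bigl(|\partial_u\phi^{n-4}|+|\partial_v\phi^{n-4}|+|\slashed\nabla\phi^{n-3}|\bigr)\, d\omega du d\tau,
\end{equation*}
plus a nonlinear remainder controlled using \cref{prop:structure,prop:pointwise-1} and the pointwise bootstraps \eqref{eq:boot-pw-4}--\eqref{eq:boot-pw-5}, which produces an extra $\ve^{1/2}$ factor multiplying analogous quadratic expressions.

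To close, I apply Cauchy--Schwarz to the bulk with an asymmetric split of $D\sim(r-M)^2$, writing $D=(r-M)^{(1+\delta)/2}\cdot (r-M)^{(3-\delta)/2}$ and assigning the first factor to $|\partial_v\phi^{n-4}|$. The resulting weighted quantities are all controlled by the master energies: $(r-M)^{1+\delta}|\partial_v\phi^{n-4}|^2$ sits inside $\underline{\mathcal M}{}_{-1,n-4}^{\tau_f}\subset\mathcal X_{0,n-4}^{\tau_f}$; $(r-M)^{3-\delta}|\partial_u\phi^{n-4}|^2\les(r-M)^{1-\delta}|\partial_u\phi^{n-4}|^2$ sits inside $\dot{\underline{\mathcal E}}{}_{\delta-1,n-4}^{\tau_f}\subset\mathcal X_{\delta,n-4}^{\tau_f}$; and $(r-M)^{3-\delta}|\slashed\nabla\phi^{n-3}|^2$ sits directly inside $\dot{\underline{\mathcal E}}{}_{\delta-1,n-3}^{\tau_f}\subset\mathcal X_{\delta,n-3}^{\tau_f}$. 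The bootstrap bounds \eqref{eq:boot-energy-1} then yield a bulk bound of the form $(\ve^2 L_i^{-2+\delta})^{1/2}(\ve^2 L_i^{-2+2\delta})^{1/2}\les\ve^{3/2} L_i^{-2+3\delta/2}$. Summing over $i\ge i_0$ gives \eqref{eq:partial-v-integrated-2}, and an additional Cauchy--Schwarz in $\tau$ on each dyadic piece then yields \eqref{eq:partial-v-integrated}.

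The main subtlety, and the step I expect to require the most care, is the asymmetric split of the $D$-weight. Unlike the far-region analog in \cref{prop:L1-Linfty}, where the available bulk control on $|\partial_u\phi|^2$ and $|\partial_v\phi|^2$ is essentially symmetric of $r^{1\pm\delta}$-type, in the near-horizon region only the strongly degenerate $(r-M)^{1+\delta}$ Morawetz bulk is available for $|\partial_v\phi|^2$, whereas $|\partial_u\phi|^2$ enjoys the less degenerate $(r-M)^{1-\delta}$ control from the $\dot{\underline{\mathcal E}}{}_{p-1}$-hierarchy with $p=\delta$. The split must respect this asymmetry to extract exactly the decay $\tau_1^{-2+3\delta/2}$ appearing in \eqref{eq:partial-v-integrated-2} without loss in the $\delta$ budget.
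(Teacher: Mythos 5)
Your approach matches the paper's almost exactly: reduce to dyadic intervals, use the fundamental theorem of calculus in $u$ along $\underline C{}^{\tau_f}(\tau)$ combined with the wave equation and angular Sobolev, bound the boundary contribution at $r=\Lambda$ via \cref{lem:little-bit-of-averaging}, and close the near-horizon bulk by an asymmetric $(r-M)$-weighted Cauchy--Schwarz using $\underline{\mathcal M}_{-1,n-4}$, $\dot{\underline{\mathcal E}}_{\delta-1,n-4}$, and $\dot{\underline{\mathcal E}}_{\delta-1,n-3}$ together with the bootstrap \eqref{eq:boot-energy-1}. The cosmetic differences (the paper uses \eqref{eq:wave-lambda} rather than \eqref{eq:wave-phi-uv}, and absorbs the $(r-M)^2|\partial_u\phi||\partial_v\phi|$ cross term directly into $\underline{\mathcal M}_{-1,n-4}$ by Young rather than using the same $(r-M)^{(1\pm\delta)/2}$ split throughout) are immaterial; your weight bookkeeping is correct and yields the same $L_i^{-2+3\delta/2}$ decay.

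There is, however, a gap: the ingoing cone $\underline C{}^{\tau_f}(\tau)$ runs from the event horizon all the way out to $r=\Lambda$, and your Cauchy--Schwarz only gives coercive bulk control in the near-horizon region $\mathcal A$. The $(r-M)$-weighted integrated energies $\underline{\mathcal M}_{-1,k}$ and $\dot{\underline{\mathcal E}}_{p-1,k}$ are only defined (and only coercive) on $\mathcal A$; in particular they contain nothing for $|\partial_u\phi^k|^2$, $|\partial_v\phi^k|^2$, or $|\slashed\nabla\phi^k|^2$ near the photon sphere $r=2M$, where the Morawetz estimate degenerates. Your ``$(r-M)^{1+\delta}|\partial_v\phi^{n-4}|^2$ sits inside $\underline{\mathcal M}_{-1,n-4}$'' is therefore false on the portion of $\underline C(\tau)$ lying in $\mathcal B_{r_{-1},3M}$. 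The paper's proof splits the bulk contribution into $\mathcal A$ (where your argument runs verbatim) and $\mathcal B_{r_0,\Lambda}$, and in the latter the key extra ingredient is the trapping-removed integrated energy $\mathcal Y^{\tau_f}_{n-3}(L_i,L_{i+1})$, controlled by the bootstrap \eqref{eq:boot-bulk-lower-order}; raising to $n-3$ commutations is what breaks the degeneracy at $r=2M$. This is not needed in the $L^2_uL^\infty$ estimate you are mirroring because there the outgoing cones $C^{\tau_f}(\tau)$ stay entirely in $r\ge\Lambda$, away from trapping — the asymmetry you flag is real, but it is this one, not the choice of $(r-M)$-split.
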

\begin{proof} As in the proof of the previous proposition, we prove the equivalent estimates for $\tau_1=L_{i_0}$. Again, it suffices to consider a single dyadic interval and prove
\begin{equation}\label{eq:v-integral-help}
     \int_{L_i}^{L_{i+1}}\sup_{\underline C^{\tau_f}(\tau)}|\partial_v\phi^{n-6}|^2\, d\tau\les \ve^{3/2}L_i^{-2+3\delta/2}.
\end{equation}  We use the fundamental theorem of calculus and the wave equation in the form \eqref{eq:wave-lambda} to estimate
\begin{equation*}
        |\partial_v\phi^{n-6}|^2(u,v,\omega) \les |\partial_v\phi^{n-6}|^2(u^\Lambda(v),v,\omega)+\int_{u^\Lambda(v)}^{\Gamma^u(\tau_f)}D|\partial_v\phi^{n-6}|\big(|\partial_u\phi^{n-6}|+ |\slashed\Delta\phi^{n-6}|+|\mathcal N^{n-6}|\big)(u,v',\omega)\,dv',
    \end{equation*}
    where $u^\Lambda(v)$ is defined implicitly by the relation $r(u^\Lambda(v),v)=\Lambda$.
Using \eqref{eq:boot-pw-4}, \eqref{eq:boot-pw-5}, \eqref{eq:nabla-psi-pw}, and \eqref{eq:N-near}, we find
\begin{equation*}
    D|\partial_v\phi^{n-6}||\mathcal N^{n-6}|\les \ve^{1/2}\big((r-M)^{3/2-\delta}|\partial_v\phi^{n-6}|^2+(r-M)^2|\partial_v\phi^{n-6}||\slashed\nabla\phi^{n-6}|\big)+\ve^{1/2}\mathbf 1_{\{r\ge r_0\}}|\partial^{\le 1}\phi^{n-6}|^2,
\end{equation*}
so that after integrating over $v\in[\Gamma^v(L_i),\Gamma^v(L_{i+1})]$, using the angular Sobolev inequality, \cref{lem:little-bit-of-averaging}, Cauchy--Schwarz, and the bootstrap assumptions \eqref{eq:boot-energy-1} and \eqref{eq:boot-bulk-lower-order}, we have
\begin{multline*}
  \int_{L_i}^{L_{i+1}}\sup_{\underline C{}_v^{\tau_f}}|\partial_v\phi^{n-6}|^2\, dv\les \ve^{3/2}L_i^{-2+\delta}+\int_{L_i}^{L_{i+1}}\int_{\underline C{}_v^{\tau_f}\cap\mathcal B_{r_0,\Lambda}}|\partial^{\le 1}\phi^{n-3}|^2 \\+\int_{L_i}^{L_{i+1}}\int_{\underline C{}_v^{\tau_f}\cap\mathcal A}\big( (r-M)^2|\partial_u\phi^{n-4}||\partial_v\phi^{n-4}|+(r-M)^{3/2-\delta}|\partial_v\phi^{n-4}|^2+(r-M)^2|\partial_v\phi^{n-4}||\slashed\nabla\phi^{n-3}| \big) \\
  \les \ve^{3/2}L_i^{-2+\delta} + \mathcal Y^{\tau_f}_{n-3}(L_i,L_{i+1}) + \mathcal X^{\tau_f}_{0,n-4}(L_i,L_{i+1}) + \big(\mathcal X^{\tau_f}_{n-4,0}(L_i,L_{i+1})\mathcal X^{\tau_f}_{n-3,\delta}(L_i,L_{i+1})\big)^{1/2} \les \ve^{3/2}L_i^{-2+3\delta/2}.
\end{multline*}
This proves \eqref{eq:v-integral-help} and completes the proof of the proposition. 
\end{proof}

\subsection{Pointwise estimates}\label{sec:pw}

Using the integrated estimates from the previous section, we now prove the fundamental scale of pointwise estimates for $\partial_u\phi^{n-6}$ that interpolates between boundedness and growth. The boundedness estimate recovers the bootstrap assumption for $\partial_u\phi^{n-6}$.

\begin{prop}[$\partial_u$ pointwise estimates]\label{prop:pointwise-u}    For any $A\ge 1$,  $\ve_0$ sufficiently small, $\tau_f\in \mathfrak B$, and $q\in [3/2-\delta,2]$, it holds that 
\begin{equation}\label{eq:u-pw-new}
    (r-M)^{-q}|\partial_u\phi^{n-6}| \les \ve \tau^{(\frac{1+\delta}{1+2\delta})(q-3/2+\delta)}
\end{equation}
in $\mathcal D_{\tau_f}\cap\{r\le\Lambda\}$ and
\begin{equation}
  \label{eq:u-pw-3}  |\partial_u\psi^{n-6}|+r|\partial_u\phi^{n-6}|\les \ve
\end{equation}
in $\mathcal D_{\tau_f}\cap\{r\ge\Lambda\}$.
\end{prop}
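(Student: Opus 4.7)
The proof splits into (i) the far-region bound \eqref{eq:u-pw-3}, (ii) two endpoint cases $q=3/2-\delta$ and $q=2$ of \eqref{eq:u-pw-new}, and (iii) interpolation for intermediate $q$. The common tool is the transport equation \eqref{eq:wave-nu}, namely
\[\partial_v(r\partial_u\phi^{n-6}) = D\partial_v\phi^{n-6}+rD\slashed\Delta\phi^{n-6}-rD\mathcal N^{n-6},\]
integrated along outgoing null rays $\{u=u_*\}$ from a suitable initial slice to the target point. For \eqref{eq:u-pw-3}, integrating this equation in the far region while controlling its source via the bootstrap pointwise bounds, the decay estimates \eqref{eq:psi-pw}--\eqref{eq:nabla-psi-pw}, the structural decomposition of \cref{prop:structure}, and the integrated $L^1L^\infty$ bound \eqref{eq:partial-u-integrated}, yields $|r\partial_u\phi^{n-6}|\les\ve$; the bound on $|\partial_u\psi^{n-6}|$ then follows from $\partial_u\psi^{n-6}=r\partial_u\phi^{n-6}-D\phi^{n-6}$ together with \cref{prop:pointwise-1}.

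For the two near-region endpoints, integrate the same transport equation along $\{u=u_*\}$ from $v=0$ to the target point $(u_*,v_*)$. The datum extracted from $\|\mathring\phi\|_\star$ satisfies $|\partial_u\phi^{n-6}(u_*,0)|\les\ve_0(r(u_*,0)-M)^2$, so it contributes harmlessly at both endpoints. At the boundedness endpoint $q=3/2-\delta$, changing variables $dv=dr/D$ in the source integrals and using the bootstrap pointwise bounds together with the integrated decay \eqref{eq:partial-v-integrated}--\eqref{eq:partial-v-integrated-2} shows that the total source is bounded by $\ve(r_*-M)^{3/2-\delta}$, giving $(r-M)^{-3/2+\delta}|\partial_u\phi^{n-6}|\les\ve$. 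At the growth endpoint $q=2$, the dominant source is the angular Laplacian: using $|rD\slashed\Delta\phi^{n-6}|\les D|\slashed\nabla\phi^{n-5}|\les D\cdot\ve\tau^{-1/2+\delta/2}$ from \eqref{eq:nabla-psi-pw}, the monotonicity $D(r(u_*,v'))\le D(r_*)\sim(r_*-M)^2$ along the ray, and $\tau\sim v$ in the near region, one obtains $\int_0^{v_*}|rD\slashed\Delta\phi^{n-6}|\,dv\les\ve(r_*-M)^2\tau^{1/2+\delta/2}$; the remaining sources, including the nonlinear term, are shown to be strictly subdominant, so that dividing by $r_*$ yields $(r_*-M)^{-2}|\partial_u\phi^{n-6}|\les\ve\tau^{1/2+\delta/2}$.

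For intermediate $q\in(3/2-\delta,2)$, I would interpolate via the geometric-mean inequality $\min(A,B)\le A^{\lambda}B^{1-\lambda}$ with $\lambda=(2-q)/(1/2+\delta)\in[0,1]$, applied to the endpoint bounds $A=\ve(r-M)^{3/2-\delta}$ and $B=\ve\tau^{1/2+\delta/2}(r-M)^2$. A direct calculation gives $(1-\lambda)(1/2+\delta/2)=(1+\delta)(q-3/2+\delta)/(1+2\delta)$, which is exactly the exponent appearing in \eqref{eq:u-pw-new}.

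The main obstacle is controlling the nonlinear source $rD\mathcal N^{n-6}$ near the horizon. By \cref{prop:structure}, one schematically has $rD|\mathcal N^{n-6}|\les|\partial_u\phi^{n-6}||\partial_v\phi^{n-6}|+D|\slashed\nabla\phi^{n-6}|^2$ for $r<r_0$. Under the crude pointwise bootstraps \eqref{eq:boot-pw-4}--\eqref{eq:boot-pw-5}, the quadratic term is only $\les\ve(r-M)^{3/2-\delta}$, and a naive integration in $v$ after the change of variables $dv=dr/D$ degenerates at $\mathcal H^+$. To close, one must exploit the integrated decay of $\partial_v\phi^{n-6}$ from \cref{prop:integrated-v-estimates}, effectively gaining a factor of $D(r_*)\sim(r_*-M)^2$ by pulling out the supremum of $D$ along the ray before integrating $|\partial_v\phi^{n-6}|$, so that the quadratic nonlinear contribution remains strictly subdominant to the angular Laplacian growth at $q=2$ and to the initial data contribution at $q=3/2-\delta$.
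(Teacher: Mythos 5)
Your proposal follows the same high-level strategy as the paper — method of characteristics in the $v$-direction, two endpoint estimates at $q = 3/2-\delta$ and $q = 2$, then pointwise interpolation for intermediate $q$ — and your interpolation exponent computation is exactly correct. There is, however, one genuine structural difference and one place where your argument is off.

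The paper does not work with $r\partial_u\phi^{n-6}$ directly but with the weighted unknown $\Psi_q^\k \doteq D^{-q/2}\partial_u\psi^\k$ (where $\psi = r\phi$). The point of the $D^{-q/2}$ factor is that $\partial_v$ produces a term $\tfrac{q}{2}D'\Psi_q^\k$ on the left-hand side, and since $qD' \geq 0$ in the domain of outer communication this has a favorable (damping) sign: one may integrate forward along the characteristic, drop this term, and the $(r-M)^{-q}$ weight is propagated automatically — no bookkeeping of how the weight evolves along the ray is required. You instead keep $r\partial_u\phi^{n-6}$ as the unknown and therefore must verify explicitly that $\int_0^{v_*}|\text{source}|\,dv \lesssim \ve\,r_*(r_*-M)^q(\cdots)$, which requires monotonicity of $r$ (hence of $D$ and $r-M$) along outgoing rays at every step. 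This is workable, but strictly less clean than the paper's weighted-unknown trick, and it is exactly this tracking burden that your write-up underestimates.

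Concretely, your discussion of the nonlinear source is muddled. After \eqref{eq:N-near}, the leading near-horizon quadratic contribution to $rD|\mathcal N^{n-6}|$ is $r|\partial_u\phi^{n-6}||\partial_v\phi^{n-6}|$, which carries \emph{no} explicit factor of $D$, so the "gain of $D(r_*) \sim (r_*-M)^2$ by pulling out $\sup D$" you invoke does not apply to it. The correct mechanism — which the paper makes explicit — is Gr\"onwall: this source factors as $|r\partial_u\phi^{n-6}|\cdot|\partial_v\phi^{n-6}| = (\text{unknown})\cdot(\text{coefficient})$, and the coefficient $|\partial_v\phi^{n-6}|$ is bounded in $L^1_v$ by \eqref{eq:boot-pw-2} and \eqref{eq:partial-v-integrated}, so it is absorbed. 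Your $\sup D$ observation is, on the other hand, precisely what is needed for the \emph{linear} sources $D\partial_v\phi^{n-6}$ and $rD\slashed\Delta\phi^{n-6}$, which do carry a $D$; you should separate these two mechanisms rather than conflating them.
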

\begin{proof} We use the method of characteristics in the $v$ direction. Let $q=3/2-\delta$ or $q=2$, $|\k|\le n-6$, and set $\Psi^\k_q\doteq D^{-q/2}\partial_u\psi^\k$. Using the wave equation, we have 
\begin{equation}\label{eq:pw-proof-1}
     |\partial_v\Psi_q^\k +\tfrac q2 D'\Psi^\k_q|\les D^{1-q/2}D'|\phi^{n-6}|+D^{1-q/2}|\slashed\nabla\phi^{n-5}|+ D^{1-q/2}r|\mathcal N^{n-6}|
\end{equation}
and by \eqref{eq:N-near}, it holds that 
\begin{equation*}
    D^{1-q/2}r |\mathcal N^{n-6}| \les  |\Psi_q^{n-6}||\partial_v\phi^{n-6}|  +D^{1-q/2}|\phi^{n-6}||\partial_v\phi^{n-6}| + D^{1-q/2}r|\slashed\nabla\phi^{n-6}|^2 + \mathbf 1_{\{r_0\le r\le \Lambda\}}|\partial^{\le 1}\phi^{n-6}|^2,
\end{equation*}
where we have used the trivial estimate $r|\partial_u\phi^{n-6}|\les |\partial_u\psi^{n-6}|+D|\phi^{n-6}|$. Since $qD'\ge 0$, \eqref{eq:pw-proof-1} can be integrated forwards from initial data, where $|\Psi_q^\k|\le \|\mathring\phi\|_\star\le \ve_0$ by assumption. After summing over $|\k|\le n-6$, this yields
\begin{multline*}
     |\Psi_q^{n-6}|\les \ve_0 + \int_0^v \big(|\partial_v\phi^{n-6}||\Psi^{n-6}_q|+ D^{1-q/2}D'|\phi^{n-6}|+D^{1-q/2}|\slashed\nabla\phi^{n-5}|\\ +D^{1-q/2}|\phi^{n-6}||\partial_v\phi^{n-6}| + D^{1-q/2}r|\slashed\nabla\phi^{n-6}|^2 + \mathbf 1_{\{r_0\le r\le \Lambda\}}|\partial^{\le 1}\phi^{n-6}|^2 \big)\,dv',
\end{multline*}
where the integral is taken with $(u,\omega)$ fixed. Since $\partial_v\phi^{n-6}$ is bounded in $L^1_v$  by \eqref{eq:boot-pw-2} and \eqref{eq:partial-v-integrated}, we use Gr\"onwall's inequality to estimate
\begin{multline}\label{eq:integral-master-partial-u}
     |\Psi_q^{n-6}|\les \ve_0 + \int_0^v \big( D^{1-q/2}D'|\phi^{n-6}|+D^{1-q/2}|\slashed\nabla\phi^{n-5}|\\ +D^{1-q/2}|\phi^{n-6}||\partial_v\phi^{n-6}| + D^{1-q/2}|\slashed\nabla\phi^{n-6}|^2 + \mathbf 1_{\{r_0\le r\le \Lambda\}}|\partial^{\le 1}\phi^{n-6}|^2 \big)\,dv'.
\end{multline}

To estimate the first term in the integral, we observe that by \eqref{eq:psi-pw},
\begin{equation*}
    \int_0^v D'|\phi^{n-6}|\,dv'\les \ve \int_0^v(r-M)r^{-4}\tau^{-1/2+\delta}\,dv',
\end{equation*}
and the integral is easily observed to be $O(1)$ by considering separately the cases $\tau\ge(r-M)^{-1}$ and $r\le\Lambda$, $\tau\le (r-M)^{-1}$ and $r\le \Lambda$, and $r\ge \Lambda$. Since $1-q/2\ge 0$, this shows that
\begin{equation*}
       \int_0^v D^{1-q/2}D'|\phi^{n-6}|\,dv'\les \ve. 
\end{equation*}
To estimate the linear angular derivative term in \eqref{eq:integral-master-partial-u}, we again use \eqref{eq:psi-pw} to estimate the integral (for both choices of $q$) in $\mathcal R_{\ge\Lambda}$ by $\les \ve$. In $\mathcal R_{\le\Lambda}$ with $q=3/2-\delta$, we use Cauchy--Schwarz, Sobolev, and the bootstrap assumption \eqref{eq:boot-energy-1} to estimate
\begin{align*}
     \int_0^v\mathbf 1_{\{r\le\Lambda\}} (r-M)^{1/2+\delta}|\slashed\nabla\phi^{n-5}|\,dv'&\les \left( \int_0^v\mathbf 1_{\{r\le\Lambda\}} (r-M)^{1+\delta}\,dv'\right)^{1/2}\left(\int_0^v \mathbf 1_{\{r\le\Lambda\}}(r-M)^\delta |\slashed\nabla\phi^{n-3}|^2 \right)^{1/2}\\
    &\les  \big(\mathcal F_{2-\delta,n-3}(u,1,\tau(u,v))\big)^{1/2}\les \ve
\end{align*}
since $(r-M)^{1+\delta}$ is integrable in $v$ along outgoing null cones in $\{r\le \Lambda\}$. For $q=2$, we instead directly integrate \eqref{eq:nabla-psi-pw} which gives 
\begin{equation*}
    \int_0^v |\slashed\nabla\phi^{n-5}|\,dv'\les \ve \tau(u,v)^{1/2+\delta/2}.
\end{equation*}
To estimate the fourth term in the integral, we use \eqref{eq:psi-pw} and \eqref{eq:partial-v-integrated} to estimate
\begin{equation*}
    \int_0^v D^{1-q/2}|\phi^{n-6}||\partial_v\phi^{n-6}|\,dv' \les \ve \int_0^v |\partial_v\phi^{n-6}|\,dv'\les \ve^{7/4}.
\end{equation*}
To estimate the fifth term in the integral for $q=3/2-\delta$, we again perform angular Sobolev to obtain 
\begin{equation*}
        \int_0^v D^{1/2+\delta}r|\slashed\nabla\phi^{n-6}|^2\,dv'\les \int_0^v D^{1/2+\delta}r|\slashed\nabla\phi^{n-4}|^2\,d\omega dv' \les \mathcal X^{\tau_f}_{3/2-\delta,n-4}(1,\infty)\les \ve^2.
\end{equation*}
For $q=2$, \eqref{eq:boot-energy-1} this estimate still works in the far region, but near the horizon we simply square \eqref{eq:nabla-psi-pw} and integrate it to obtain 
\begin{equation*}
    \int_0^vr|\slashed\nabla\phi^{n-6}|^2\,dv' \les  \ve^2 \tau(u,v)^{\delta}.
\end{equation*}
 The final term in \eqref{eq:integral-master-partial-u} can be shown to be $\les\ve^{3/2}$ by the same argument as in \cref{lem:little-bit-of-averaging}. 

Altogether, we have now shown that $|\Psi^{n-6}_{3/2-\delta}|\les \ve$ and $|\Psi^{n-6}_{2}|\les\ve \tau^{1/2+\delta/2}$, so by a simple pointwise interpolation, we prove \eqref{eq:u-pw-new}.
\end{proof}

\begin{prop}[$\partial_v$ pointwise estimates]\label{prop:pointwise-v}
    For any $A\ge 1$,  $\ve_0$ sufficiently small, and $\tau_f\in \mathfrak B$, it holds that 
      \begin{equation}\label{eq:v-improved}
          r^{3/2-\delta}|\partial_v\psi^{n-6}|+ r^2|\partial_v\phi^{n-6}|  \les  \ve
      \end{equation}
      in $\mathcal D_{\tau_f}$.
\end{prop}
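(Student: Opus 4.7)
The plan is to apply the method of characteristics in the $u$-direction, mirroring the proof of \cref{prop:pointwise-u} with the roles of $u$ and $v$ swapped. I set $\Psi^\k \doteq r^{3/2-\delta}\partial_v\psi^\k$ for $|\k|\le n-6$. Using $\partial_ur=-D$ and the wave equation \eqref{eq:wave-psi}, a direct computation yields
\begin{equation*}
\partial_u\Psi^\k+(3/2-\delta)r^{-1}D\,\Psi^\k=r^{3/2-\delta}\big({-}DD'\phi^\k+D\slashed\Delta\psi^\k-rD\mathcal N^\k\big).
\end{equation*}
Crucially, the linear coefficient $(3/2-\delta)r^{-1}D\ge 0$ is a damping term in the forward $u$-direction, so multiplying through by the integrating factor $\mu(u)=(r(0,v)/r(u,v))^{3/2-\delta}$ (coming from $\int_0^u r^{-1}D\,du'=\log(r(0,v)/r(u,v))$) and then using $r(u,v)\le r(u'',v)$ for $u''\le u$, I obtain
\begin{equation*}
|\Psi^\k(u,v,\omega)|\le |\Psi^\k(0,v,\omega)|+\int_0^u r^{3/2-\delta}\big(|DD'||\phi^\k|+D|\slashed\Delta\psi^\k|+rD|\mathcal N^\k|\big)(u'',v,\omega)\,du''.
\end{equation*}
The boundary term at $u=0$ is controlled by $\|\mathring\phi\|_\star\le\ve_0\les\ve$ via the definition of $\|\cdot\|_\star$ and the smallness assumption \eqref{eq:smallness}.

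Next I estimate each of the three terms in the integrand, splitting the $u$-integration into a far piece where $r\ge\Lambda$ and a near piece where $r\le\Lambda$ as needed. For the linear $DD'\phi^\k$ term I apply the pointwise bound \eqref{eq:psi-pw} and change variables via $du''=-D^{-1}\,dr$ to reduce to an integral $\les\ve\tau^{-1/2+\delta/2}\int r^{-5/2-\delta}|r-M|\,dr$, which is integrable uniformly in $v$. For the nonlinear term $rD\mathcal N^\k$, I use the decomposition \eqref{eq:N-near}: the main quadratic $|\partial_u\phi^{n-6}||\partial_v\phi^{n-6}|$ is controlled in the far region by combining the bootstrap $r^2|\partial_v\phi^{n-6}|\le\ve^{1/2}$ with the $L^1_u L^\infty_{v,\omega}$ estimate \eqref{eq:partial-u-integrated} applied to $r|\partial_u\phi^{n-6}|$, yielding a contribution of order $\ve^{5/4}$; in the near region it is controlled directly by \eqref{eq:boot-pw-4} and \eqref{eq:boot-pw-5} after a change of variables producing an integral $\int(r-M)^{-1/2-\delta}\,dr$ which converges. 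The other terms in \eqref{eq:N-near}, as well as the cutoff terms in the intermediate region, are handled similarly using \eqref{eq:nabla-psi-pw} and the bootstrap pointwise bounds.

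The main technical obstacle, I expect, is the angular Laplacian term $D\slashed\Delta\psi^\k$ in the far region. The naive pointwise bound $|\slashed\Delta\psi^{n-6}|\les \ve r^{-2}\tau^{-1/2+\delta/2}$ from \eqref{eq:psi-pw} is insufficient: plugging it in yields an $r$-integral that grows like $r(0,v)^{1/2-\delta}$ and destroys uniformity in $v$. Instead I bound $|\slashed\Delta\psi^{n-6}|\les|\slashed\nabla\phi^{n-5}|$ via \eqref{eq:ang-2}, apply the angular Sobolev inequality \eqref{eq:angular-Sobolev} to pass to $L^\infty_\omega$, and perform Cauchy--Schwarz in $u$ with the weight $r^{-1-\delta}D^2$ (integrable in $u$ uniformly in $v$ in the far region). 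This reduces the second factor to the ingoing flux $\underline{\mathcal F}_{2-\delta,n-3}(v,1,\tau)$, which is bounded by the master energy $\mathcal X^{\tau_f}_{2-\delta,n-2}(1,\infty)\les\ve^2$ via the bootstrap \eqref{eq:boot-energy-1}; this is the $u$-integration analogue of the Cauchy--Schwarz/$\mathcal F_{2-\delta,n-3}$ estimate used in the near region of \cref{prop:pointwise-u}. In the near region $r$ is bounded, so the pointwise estimate combined with $\int D\,du''\les 1$ gives a contribution of order $\ve\tau^{-1/2+\delta/2}$. Putting all contributions together gives $r^{3/2-\delta}|\partial_v\psi^{n-6}|\les\ve$, and the companion bound $r^2|\partial_v\phi^{n-6}|\les\ve$ follows algebraically from the identity $r\partial_v\phi=\partial_v\psi-D\phi$ together with \eqref{eq:psi-pw}.
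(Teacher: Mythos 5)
Your proof is correct and follows essentially the same route as the paper's: the method of characteristics in $u$ for $\Psi_* = r^{3/2-\delta}\partial_v\psi^\k$, exploiting the favorable sign of the damping coefficient, bounding the linear $DD'\phi^\k$ term pointwise via \eqref{eq:psi-pw}, and estimating the angular Laplacian term in the far region by Cauchy--Schwarz in $u$ together with the flux $\underline{\mathcal F}_{2-\delta,n-3}$ (the obstruction you identify with the naive pointwise bound is exactly why the paper also resorts to the flux there). The one small stylistic difference is your treatment of the $|\partial_u\phi^{n-6}||\partial_v\phi^{n-6}|$ term: you bound $\partial_v\phi^{n-6}$ directly using \eqref{eq:boot-pw-2} and \eqref{eq:boot-pw-5}, whereas the paper keeps $|\Psi_*^{n-6}|$ via the trivial estimate $r^{5/2-\delta}|\partial_v\phi^{n-6}|\les|\Psi_*^{n-6}|+Dr^{3/2-\delta}|\phi^{n-6}|$ and then absorbs by Gr\"onwall; both variants ultimately rest on the same $L^1_u$ control of $\partial_u\phi^{n-6}$ from \eqref{eq:boot-pw-4} and \eqref{eq:partial-u-integrated}, so this is an equivalent reformulation rather than a different argument.
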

\begin{proof}
We use the method of characteristics in the $u$ direction. Set $\Psi_*^\k=r^{3/2-\delta}\partial_v\psi^\k$. Using the wave equation, we have
\begin{equation}\label{eq:v-pw-1}
    |\partial_u\Psi^\k_*+(\tfrac 32-\delta)Dr^{-1}\Psi^\k_*|\les r^{3/2-\delta}\big(DD'|\phi^{n-6}|+D|\slashed\nabla\phi^{n-5}|+Dr|\mathcal N^{n-6}|\big),
\end{equation}
and by \eqref{eq:N-near}, it holds that
\begin{equation*}
    r^{3/2-\delta} Dr|\mathcal N^{n-6}|\les |\partial_u\phi^{n-6}||\Psi_*^{n-6}|+Dr^{3/2-\delta}|\phi^{n-6}||\partial_u\phi^{n-6}| + Dr^{5/2-\delta} |\slashed\nabla\phi^{n-6}|^2 + \mathbf 1_{\{r_0\le r\le \Lambda\}}|\partial^{\le 1}\phi^{n-6}|^2,
\end{equation*}
where we have used the trivial estimate $r^{5/2-\delta}|\partial_v\phi^{n-6}|\les|\Psi_*^{n-6}|+Dr^{3/2-\delta}|\phi^{n-6}|$. Since $(3-\delta/2)D\ge 0$, \eqref{eq:v-pw-1} can be integrated forwards from the initial data, where $|\Psi^{n-6}_*|\le \|\mathring\phi\|_\star\le \ve_0$ by assumption. After summing over $|\k|\le n-6$, this yields
\begin{multline}\label{eq:v-pw-2}
  |\Psi_*^{n-6}|\les \ve_0+\int_0^u\big( |\partial_u\phi^{n-6}||\Psi_*^{n-6}|+ DD'r^{3/2-\delta}|\phi^{n-6}|+Dr^{3/2-\delta}|\phi^{n-6}||\partial_u\phi^{n-6}|  \\  + Dr^{3/2-\delta}|\slashed\nabla\phi^{n-5}|+ Dr^{5/2-\delta}|\slashed\nabla\phi^{n-6}|^2 +\mathbf 1_{\{r_0\le r\le \Lambda\}}|\partial^{\le 1}\phi^{n-6}|^2 \big)\,du',
\end{multline}
where the integral is taken with $(v,\omega)$ fixed. 

The first term in the integral is absorbed by Gr\"onwall, since by \eqref{eq:boot-pw-4} and \eqref{eq:partial-u-integrated}, $\partial_u\phi^{n-6}$ is bounded in $L^1_u$. Next, using \eqref{eq:psi-pw}, we observe that
\begin{equation*}
    \int_0^u DD'r^{3/2-\delta}|\phi^{n-6}|\,du' \les \ve\int_0^u \mathbf 1_{\{r\le\Lambda\}} (r-M)^3\, du' + \ve\int_0^u \mathbf 1_{\{r\ge\Lambda\}}r^{-3/2-\delta}\, du'\les\ve 
\end{equation*}
as $(r-M)^3$ is integrable at the horizon and $r^{-3/2-\delta}$ is integrable at infinity. Using the bootstrap assumptions \eqref{eq:boot-pw-1} and \eqref{eq:boot-pw-4}, the third term in the integral is shown to be $\les\ve$ in the same manner. 

To estimate the linear angular derivative term in \eqref{eq:v-pw-2}, we use \eqref{eq:nabla-psi-pw} to argue as for the previous two terms and bound the contribution in the near region by $\les \ve$. In the far region, we use the angular Sobolev inquality, Cauchy--Schwarz, and the bootstrap assumption \eqref{eq:boot-energy-1} to obtain 
\begin{align*}
    \int_0^u r^{3/2-\delta}|\slashed\nabla\phi^{n-5}|\,du' &\les \left( \int_0^u r^{-1-\delta}\,du'\right)^{1/2}\left( \int_0^u r^{4-\delta}|\slashed\nabla\phi^{n-3}|^2\,d\omega du'\right)^{1/2}\\&\les \big(\underline{\mathcal F}{}_{2-\delta,n-3}^{\tau_f}(v,1,\tau(u,v))\big)^{1/2}\les \ve.
\end{align*}
The quadratic angular derivative term in \eqref{eq:v-pw-2} is handled identically at the horizon and in the far region is estimated by $\les \underline{\mathcal F}{}_{1/2-\delta,n-3}^{\tau_f}(v,1,\tau(u,v))\les\ve^2$. The final term in \eqref{eq:v-pw-2} can be shown to be $\les\ve^{3/2}$ by the same argument as in \cref{lem:little-bit-of-averaging}. \end{proof}

\subsection{Nonlinear error estimates in the intermediate \texorpdfstring{$r$}{r} region}\label{sec:trapping}

Given a region $\mathcal R$ as depicted in \cref{fig:butterfly}, we define 
\begin{equation*}
    \mathbb T_k(\mathcal R)\doteq \iint_{\mathcal R\cap\mathcal B_{r_{-2},\Lambda}} |\partial^{\le 1}\phi^k||\mathcal N^k|,
\end{equation*} where $|\partial^{\le 1}\phi^k|\doteq |\phi^k|+|\partial\phi^k|$.
This quantity controls the trapping error term $\mathbb E_{\mathrm{trap},k}(\mathcal R)$ from \cref{prop:removing-trapping} as well as all of the nonlinear errors from \cref{sec:structure} in the region $r_{-2}\le r\le\Lambda$. 

\begin{prop}\label{prop:trapping-1}
For any $A\ge 1$,  $\ve_0$ sufficiently small, $\tau_f\in \mathfrak B$, and region $\mathcal R\subset\mathcal D^{\tau_f}$ as depicted in \cref{fig:butterfly}, it holds that 
    \begin{equation}\label{eq:TT-estimate}
      \mathbb T_k(\mathcal R) \les \ve^{5/2}\begin{cases}
          \tau_1^{-2+\delta}  & \text{ if }k = n-2  \\
          \tau_1^{-1}  &  \text{ if }k = n-1\\
           1 &  \text{ if }k = n\\
        \end{cases}.
    \end{equation}
\end{prop}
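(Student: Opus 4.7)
The starting point is \cref{prop:structure}: since $D$ is bounded below on the compact intermediate region $\mathcal B_{r_{-2},\Lambda}$, every term in $|\mathcal N^k|$ there is controlled by $|\mathfrak N^k_*|$, yielding the pointwise bound $|\mathcal N^k|\les|\partial^{\le 1}\phi^{n-6}||\partial^{\le 1}\phi^k|$. This reduces $\mathbb T_k(\mathcal R)$ to controlling the cubic spacetime integral
\[
\iint_{\mathcal R\cap\mathcal B_{r_{-2},\Lambda}} |\partial^{\le 1}\phi^{n-6}||\partial^{\le 1}\phi^k|^2\,d\omega du dv,
\]
which I would bound by pulling the lower-order factor out in $L^\infty$ along constant-$\tau$ slices intersected with the intermediate region.

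For the lower-order factor I would split by derivative type. The zero-order and angular pieces $|\phi^{n-6}|+|\slashed\nabla\phi^{n-6}|$ decay pointwise like $\ve\tau^{-1/2+\delta/2}$ by \cref{prop:pointwise-1}, while the null-derivative pieces $|\partial_u\phi^{n-6}|$ and $|\partial_v\phi^{n-6}|$ are merely uniformly bounded by $\ve^{1/2}$ from the bootstrap but enjoy the integrated $L^1L^\infty$ and $L^2L^\infty$ estimates of \cref{prop:L1-Linfty,prop:integrated-v-estimates}.

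For the quadratic top-order factor $|\partial^{\le 1}\phi^k|^2$ I separate $\mathcal B_{r_{-2},\Lambda}$ into its non-trapped portion $\mathcal B_{r_{-2},r_{-1}}\cup\mathcal B_{3M,\Lambda}$ and its trapped portion $\mathcal B_{r_{-1},3M}$. On the non-trapped portion the $(r-M)^{1+\delta}$ weights in $\underline{\mathcal M}_{-1,k}$ are bounded below, so the $\tau$-integral of $|\partial^{\le 1}\phi^k|^2$ is directly absorbed into $\mathcal X_{0,k}$ from the bootstrap without any derivative loss. On the trapped portion, for $k\le n-1$ I invoke \cref{prop:removing-trapping} to recover the missing $|T\phi^k|^2$ and $|\slashed\nabla\phi^k|^2$ contributions at the cost of one commutation, yielding a bound by $\mathcal X_{0,k+1}$ plus an $\mathbb E_{\mathrm{trap},k}$ term which reabsorbs into $\mathbb T_k$ for $\ve_0$ small.

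A dyadic decomposition $[\tau_1,\infty)=\bigcup[L_i,L_{i+1}]$ together with Cauchy--Schwarz in $\tau$ on each dyadic block then matches the integrated/pointwise decay of the lower-order factor against the bootstrap rates $\mathcal X_{0,n-2}\le\ve^2\tau_1^{-2+\delta}$, $\mathcal X_{0,n-1}\le\ve^2\tau_1^{-1}$, and $\mathcal X_{0,n}\le\ve^2$; the resulting $\ve$-powers and $\tau$-exponents combine favorably (because the $\tau^{1/2-\delta/2}$ deficit from the one-derivative loss is compensated by the $\ve^{1/4}$ gain of the $L^2L^\infty$ estimates) to yield $\ve^{5/2}\tau_1^{-2+\delta}$ for $k=n-2$ and $\ve^{5/2}\tau_1^{-1}$ for $k=n-1$. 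The main obstacle is the top case $k=n$, where no $\mathcal X_{0,n+1}$ bootstrap exists and \cref{prop:removing-trapping} cannot be invoked directly; since however the claimed bound $\mathbb T_n\les\ve^{5/2}$ carries no required $\tau$-decay, one redoes the wave-equation integration by parts from the proof of that proposition at order $n$ directly, controlling the resulting $\iint\chi D\phi^n\cdot\mathcal N^n$ nonlinear term via the pointwise bound $|\partial^{\le 1}\phi^{n-6}|\les\ve^{1/2}$ and absorbing the $|\slashed\nabla\phi^n|^2$-type contributions into $\mathbb T_n$ itself.
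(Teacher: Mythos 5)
Your proposal diverges from the paper at the crucial step of how the top-order quadratic factor is controlled at trapping, and this divergence creates a real gap. The paper places $|\partial^{\le 1}\phi^k|^2$ into the \emph{flux} energy $\underline{\mathcal E}{}^{\tau_f}_{0,k}(\tau)$ along ingoing cones (which has no degeneration at the photon sphere) and the lower-order factor into an $L^1_\tau L^\infty$ integral, so that $\iint_{\mathrm{trap}}|\partial_u\phi^{n-6}||\partial_u\phi^k|^2\les\sup_\tau\underline{\mathcal E}{}^{\tau_f}_{0,k}(\tau)\cdot\int_\tau\sup_{\underline C(\tau)}|\partial_u\phi^{n-6}|\,d\tau$; the second factor is then controlled by applying the fundamental theorem of calculus from $\Gamma$ inward, writing $\partial_u^2\phi^{n-6}=2\partial_uT\phi^{n-6}-\partial_u\partial_v\phi^{n-6}$ (this is the one use of $T$-commutation in the paper), the wave equation, the angular Sobolev inequality, \cref{lem:little-bit-of-averaging}, and the $\mathcal Y_{n-3}$ bootstrap. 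You instead put the top-order factor into the Morawetz \emph{bulk}, which does degenerate at trapping, and propose to patch it with \cref{prop:removing-trapping}.

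That patch fails for three compounding reasons. First, the $\tau$-decay does not close: \cref{prop:removing-trapping} costs one commutation, and the bootstrap rate for $\mathcal X_{0,k}$ drops a full power of $\tau$ per commutation ($\tau_1^{-2+\delta}\to\tau_1^{-1}$), so for $k=n-2$ your trapped-region contribution is at best $\ve^{1/2}\cdot\ve^2\tau_1^{-1}$ or, using the pointwise decay of the lower-order factor, $\ve\tau_1^{-1/2+\delta/2}\cdot\ve^2\tau_1^{-1}$; neither is $\les\ve^{5/2}\tau_1^{-2+\delta}$ for large $\tau_1$, and the $\ve^{1/4}$ gain you invoke cannot compensate a $\tau$-power loss. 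Second, the right-hand side of \cref{prop:removing-trapping} is $\mathbb M_{k+1}(\mathcal R)+\mathbb E_{\mathrm{trap},k}(\mathcal R)$, and $\mathbb M_{k+1}$ carries the nonlinear errors $\mathbb E_{T,k+1}+\mathbb E_{Z,k+1}$, which are estimated in \cref{prop:T-error} \emph{by using} \cref{prop:trapping-1} at level $k+1$; so your scheme is a downward induction whose base case is $k=n$, and there the argument breaks because both the $|\slashed\nabla\phi^\k|^2\les|\phi^{\mathbf k+1}|^2$ step and the spherical integration by parts of $\slashed\Delta\phi^{\mathbf k}$ in the proof of \cref{prop:removing-trapping} require control of $|\phi^{n+1}|$, which is simply unavailable; the $\iint\chi D|\slashed\nabla\phi^n|^2$ term has no small prefactor and cannot be ``reabsorbed'' into $\mathbb T_n$. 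A secondary issue: the $L^1L^\infty$/$L^2L^\infty$ estimates you cite for $\partial_u\phi^{n-6}$ are localized to $\{r\ge\Lambda\}$ in \cref{prop:L1-Linfty} and do not cover the intermediate region; the paper has to re-derive the intermediate-region version inside this very proof, which is where the wave-equation/$T$-commutation argument enters.
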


\begin{rk}
Given our bootstrap assumptions, $ \mathbb T_k(\mathcal R)$ actually decays better than this. However, any extra decay here is wasted since it cannot be used to prove more decay for the master energy $\mathcal X_{p,k}$ (which is limited by the length of the $p$-hierarchy). We have therefore chosen to record suboptimal, but sufficient (and simpler) decay estimates for $\mathbb T_k(\mathcal R)$ and all subsequent error estimates in \cref{sec:T-error,sec:rp-error,sec:horizon-error}.
\end{rk}

\begin{proof}[Proof of \cref{prop:trapping-1}] Immediately from the definition of $\mathcal N$ and the assumption $n\ge 12$, we have
\begin{equation*}
    \mathbb T_k(\mathcal R)\les \iint_{\mathcal R\cap\mathcal B_{r_{-2},\Lambda}}|\partial^{\le 1}\phi^{n-6}||\partial^{\le 1}\phi^k|^2.
\end{equation*}
   The right-hand side contains several different combinations of terms which must be treated slightly differently. The general strategy is to put $|\partial^{\le 1}\phi^k|^2$ in an energy, estimate $\partial^{\le 1}\phi^{n-6}$ pointwise by a lower order energy, and then dyadically decompose to take advantage of the integrated decay of the lower order energy via the bootstrap assumption \eqref{eq:boot-bulk-lower-order}. Here we only prove
    \begin{equation}\label{eq:trap-help-2}
        \iint_{\mathcal R\cap \mathcal B_{r_{-1},\Lambda}} |\partial_u\phi^{n-6}||\partial_u\phi^k|^2 \les \text{RHS of \eqref{eq:TT-estimate}}
    \end{equation}
    as this displays the main ideas.

Let $(u,v,\omega)\in \mathcal R\cap \mathcal B_{r_{-1},\Lambda}$. By the fundamental theorem of calculus,
\begin{equation}\label{eq:trap-help-1}
    |\partial_u\phi^{n-6}|^2(u,v,\omega) \les |\partial_u\phi^{n-6}|^2(u^{\Lambda}(v),v,\omega)+\int_{u^{\Lambda}(v)}^u |\partial_u\phi^{n-6}||\partial_u^2\phi^{n-6}|(u',v,\omega) \,du',
\end{equation}
where $u^{\Lambda}(v)$ is defined by $r(u^{\Lambda}(v),v)=\Lambda$. Since $T=\frac 12(\partial_u+\partial_v)$, we have
\begin{equation*}
    |\partial_u^2\phi^{n-6}|\les |\partial_uT\phi^{n-6}|+ |\partial_u\partial_v\phi^{n-6}|\les |\partial_u\phi^{n-5}|+ |R\phi^{n-6}|+|\slashed\Delta\phi^{n-6}|+|\mathcal N^{n-6}|.
\end{equation*}
Using \cref{prop:pointwise-1}, \eqref{eq:N-near}, and the trivial estimate $|\partial_v\phi^{n-6}|\les |\partial_u\phi^{n-6}|+|\phi^{n-5}|$ (which follows from the fact that $T$ is a commutation vector field), we estimate $|\mathcal N^{n-6}|\les |\phi^{n-5}|+|\partial_u\phi^{n-6}|$, and hence obtain
\begin{equation*}
     |\partial_u^2\phi^{n-6}|\les |\partial_u\phi^{n-5}|+|\slashed\nabla\phi^{n-5}|+|\phi^{n-5}|.
\end{equation*}
Inserting this into the estimate \eqref{eq:trap-help-1}, using the angular Sobolev inequality, and the definition of $\underline{\mathcal E}{}_{0,k}^{\tau_f}$, we find 
\begin{equation*}
    |\partial_u\phi^{n-6}|(u,v,\omega)\les \left(\int_{S^2} |\partial_u\phi^{n-4}|^2(u^{\Lambda}(v),v,\omega)\,d\omega\right)^{1/2} + \big(\underline{\mathcal E}{}_{0,n-3}^{\tau_f}((\Gamma^v)^{-1}(v))\big)^{1/2}.
\end{equation*}
Now let $L_{i_0}\le\tau_1$. Using the previous estimate, Cauchy--Schwarz, \cref{lem:little-bit-of-averaging}, and \eqref{eq:boot-bulk-lower-order}, we obtain 
\begin{multline*}
    \iint_{\mathcal R\cap\mathcal B_{r_{-1},\Lambda}}|\partial_u\phi^{n-6}||\partial_u\phi^k|^2\les \sum_{i\ge i_0}\iint_{\mathcal R\cap\mathcal B_{r_{-1},\Lambda}\cap\{{L_i\le \tau\le L_{i+1}\}}}|\partial_u\phi^{n-6}||\partial_u\phi^k|^2 \\\les \sum_{i\ge i_0}\int_{L_i}^{L_{i+1}} \left(\left(\int_{S^2} |\partial_u\phi^{n-4}|^2|_\Gamma\,d\omega\right)^{1/2} + \big(\underline{\mathcal E}{}_{0,n-3}^{\tau_f}(\tau)\big)^{1/2}\right) \underline{\mathcal E}^{\tau_f}_{0,k}(\tau)\, d\tau \\
    \les \sum_{i\ge i_0} L_i^{1/2} \sup_{\tau\in[L_i,L_{i+1}]}\underline{\mathcal E}{}_{0,k}^{\tau_f}(\tau) \left(\left(\int_{\Gamma\cap\{L_i\le \tau\le L_{i+1}\}}|\partial_u\phi^{n-4}|^2 \,d\omega d\tau\right)^{1/2}+\left(\int_{L_i}^{L_{i+1}}\underline{\mathcal E}{}_{0,n-3}^{\tau_f}(\tau)\,d\tau\right)^{1/2}\right)\\
    \les \sum_{i\ge i_0} L_i^{1/2}\mathcal X^{\tau_f}_{0,k}(L_i,L_{i+1})\left(\ve^{3/4}L_i^{-1+\delta/2}+\big(\mathcal Y^{\tau_f}_{n-3}(L_i,L_{i+1})\big)^{1/2}\right) \les \ve^{3/4}\mathcal X^{\tau_f}_{0,k}(L_{i_0},\infty),
\end{multline*}
as the dyadic sum converges. Now \eqref{eq:boot-energy-1}--\eqref{eq:boot-energy-3} imply \eqref{eq:trap-help-2}. \end{proof}

\subsection{Degenerate energy and Morawetz nonlinear error estimates}\label{sec:T-error} 

We now estimate the error terms in the $T$-energy estimate \eqref{eq:T-energy-est} and Morawetz estimate \eqref{eq:Morawetz}. In the following, we simply write $\mathcal X_{p,k}^{\tau_f}$ for $\mathcal X_{p,k}^{\tau_f}(\tau_1,\tau_2)$. 

\begin{prop}\label{prop:T-error}   For any $A\ge 1$,  $\ve_0$ sufficiently small, $\tau_f\in \mathfrak B$, and region $\mathcal R\subset\mathcal D^{\tau_f}$ as depicted in \cref{fig:butterfly}, it holds that 
\begin{equation} \label{eq:T-error}
    \mathbb E_{T,k}(\mathcal R)+\mathbb E_{Z,k}(\mathcal R) \les \ve^{5/2} \begin{cases}
          \tau_1^{-2+\delta}  & \text{ if }k = n-2  \\
          \tau_1^{-1}  &  \text{ if }k = n-1\\
          1 &  \text{ if }k = n\\
        \end{cases}. 
\end{equation}
\end{prop}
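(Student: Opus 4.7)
The plan is to split the integration region $\mathcal R$ into the intermediate slab $\mathcal R \cap \mathcal B_{r_0,\Lambda}$ and the near/far region $\mathcal R \cap (\mathcal A \cup \{r \ge \Lambda\})$, treating each piece separately. In the intermediate region no null structure is imposed on $\mathcal N$ by \cref{def:null-cond}, so both integrands of $\mathbb E_{T,k}(\mathcal R)$ and $\mathbb E_{Z,k}(\mathcal R)$ are pointwise dominated by $|\partial^{\le 1}\phi^{n-6}||\partial^{\le 1}\phi^k|^2$ (the weights $Dr^2$ and $D^2r$ are bounded there). This contribution is therefore subsumed by $\mathbb T_k(\mathcal R)$, so \cref{prop:trapping-1} supplies exactly the desired decay for all three values of $k$.

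In the near and far regions, I would apply \cref{prop:structure} to write $D|\mathcal N^k| \lesssim \sum_{\mathrm j}|\mathfrak N^k_{\mathrm j}|$, reducing the task to bounding the bulk integrals $\mathbb E(\partial_u,\mathrm j,k,\mathcal R)$, $\mathbb E(\partial_v,\mathrm j,k,\mathcal R)$, and $\mathbb E(Z,\mathrm j,k,\mathcal R)$ for $\mathrm j\in\{\mathrm i,\mathrm{ii},\mathrm{iii},\mathrm{iv},\mathrm v\}$. The general scheme is to place the low-order factor (carrying at most $n-6$ commutations) in $L^\infty$ along ingoing cones $\underline C(\tau)$ or outgoing cones $C(\tau)$, integrate it in $\tau$, and then control the remaining integral of $|\partial\phi^k|^2$ (or $|\phi^k||\partial\phi^k|$) by the energy $\mathcal X_{0,k}$ furnished by the bootstraps \eqref{eq:boot-energy-1}--\eqref{eq:boot-energy-3}. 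The weights match naturally: at the horizon $Dr^2 \sim (r-M)^2$ and $D^2r \sim (r-M)^4$ are harmlessly bounded against the $p=0$ ingoing flux, whereas in the far region $Dr^2\sim r^2$ and $D^2 r \sim r$ pair with the $r^2|\partial\phi^k|^2$ and Hardy-controlled zeroth-order contributions in $\mathcal E_{0,k}$ (\cref{lem:hardy-far}).

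As a representative example, the contribution of $\mathfrak N^k_{\mathrm i}$ to $\mathbb E(\partial_u,\mathrm i, k,\mathcal R)$ in the near region is
\begin{equation*}
    \iint_{\mathcal R\cap\mathcal A} r^2|\partial_v\phi^{n-6}||\partial_u\phi^k|^2 \lesssim \int_{\tau_1}^{\tau_2}\sup_{\underline C(\tau)}|\partial_v\phi^{n-6}|\,d\tau \cdot \sup_{\tau\in [\tau_1,\tau_2]}\underline{\mathcal E}{}_{0,k}(\tau),
\end{equation*}
and combining \eqref{eq:partial-v-integrated} with the bootstrap on $\mathcal X_{0,k}$ yields $\lesssim \varepsilon^{11/4}\tau_1^{-1/2+3\delta/4}$ times the $\tau_1$-power dictated by \eqref{eq:boot-energy-1}--\eqref{eq:boot-energy-3}, comfortably beating the required \eqref{eq:T-error} in each regime. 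The symmetric $\mathfrak N^k_{\mathrm{ii}}$ term in the far region is handled identically, with \cref{prop:L1-Linfty} replacing \cref{prop:integrated-v-estimates}. The remaining terms $\mathfrak N^k_{\mathrm{iii}}$, $\mathfrak N^k_{\mathrm{iv}}$, and $\mathfrak N^k_{\mathrm v}$ all carry an explicit $\tau$-decay factor, either built into their definition or supplied by the pointwise estimates \cref{prop:pointwise-u,prop:pointwise-v}, so they are estimated directly against $\mathcal X_{0,k}$ after applying Hardy to any zeroth-order contribution.

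The main obstacle is the top-order case $k=n$, for which \eqref{eq:boot-energy-3} only gives boundedness of $\mathcal X_{0,n}$. To obtain the $\varepsilon^{5/2}$ bound of \eqref{eq:T-error} with no growth in $\tau_1$, it is essential that every factor of $\partial\phi^{n-6}$ arising from a quadratic error be estimated through the integrated $L^1_\tau L^\infty_{x,\omega}$ norms of \cref{prop:L1-Linfty,prop:integrated-v-estimates}, rather than pointwise: this gains an extra power of $\varepsilon^{3/4}$ (and a mildly negative power of $\tau_1$), converting the raw $\varepsilon^2$ coming from $\mathcal X_{0,n}$ into the required $\varepsilon^{11/4}$. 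Summing the contributions of all five terms $\mathfrak N^k_{\mathrm j}$ across the near and far regions, and combining with the intermediate-region estimate from \cref{prop:trapping-1}, then yields \eqref{eq:T-error}.
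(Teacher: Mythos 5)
Your proposal is correct and follows the paper's strategy: the same decomposition into the intermediate trapping slab (absorbed into $\mathbb T_k$ and controlled via \cref{prop:trapping-1}) and the near/far regions (reduced via \cref{prop:structure} to the bulk integrals $\mathbb E(\partial_u/\partial_v/Z,\mathrm j,k,\mathcal R)$), and the same mechanism of placing the low-order factor in an integrated-supremum norm and the commuted quantities in $\mathcal X_{0,k}$, with your representative near-horizon estimate for $\mathbb E(\partial_u,\mathrm i,k,\mathcal R)$ matching the paper's \eqref{eq:T-error-2}. The one small imprecision is the claim that \emph{every} $\partial\phi^{n-6}$ factor must go through the $L^1_\tau L^\infty$ norms of \cref{prop:L1-Linfty,prop:integrated-v-estimates}: the paper freely uses the improved pointwise bounds \eqref{eq:u-pw-new} and \eqref{eq:v-improved} (which already upgrade $\ve^{1/2}$ to $\ve$) for terms like $\mathbb E(\partial_u,\mathrm{ii},k,\mathcal R)$ near the horizon, where the extra $(r-M)$-weight they supply pairs cleanly with the energy; the $L^1L^\infty$ route is needed specifically when the remaining high-order factor $|\partial_u\phi^k|^2$ or $|\partial_v\phi^k|^2$ would otherwise demand a nondegenerate Morawetz bulk that $\mathcal X_{0,k}$ does not provide, forcing one to fall back on the boundary fluxes $\underline{\mathcal E}_{0,k}$ or $\mathcal E_{0,k}$ via the cone-supremum argument.
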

\begin{proof} Unpacking our notation, it holds that
\begin{equation}\label{eq:T-error-1}
      \mathbb E_{T,k}(\mathcal R)+\mathbb E_{Z,k}(\mathcal R) \les \mathbb T_k(\mathcal R)+ \sum_{\mathrm j\in\{\mathrm{i},\mathrm{ii},\mathrm{iii},\mathrm{iv},\mathrm{v}\}}\big(\mathbb E(\partial_u,\mathrm j,k,\mathcal R)+\mathbb E(\partial_v,\mathrm j,k,\mathcal R)+\mathbb E(Z,\mathrm j,k,\mathcal R)\big).
\end{equation}
The term $\mathbb T_k(\mathcal R)$ was estimated in \cref{prop:trapping-1}, so we now proceed to estimate the remaining terms. 

    \textsc{Estimate for $\mathbb E(\partial_u,\mathrm{i},k,\mathcal R)$:} Using \eqref{eq:partial-v-integrated} and \eqref{eq:v-improved}, we obtain
    \begin{multline}\label{eq:T-error-2}
        \mathbb E(\partial_u,\mathrm{i},k,\mathcal R) = \iint_{\mathcal R\cap\mathcal A}|\partial_v\phi^{n-6}||\partial_u\phi^k|^2+\iint_{\mathcal R_{\ge\Lambda}}r^2|\partial_v\phi^{n-6}||\partial_u\phi^k|^2 \\ \les\sup_{\tau\in[\tau_1,\tau_2]}\underline{\mathcal E}{}^{\tau_f}_{0,k}(\tau) \int_{\tau_1}^{\tau_2}\sup_{\underline C^{\tau_f}(\tau)}|\partial_v\phi^{n-6}|\,d\tau +\sup_{\mathcal R_{\ge\Lambda}}\big(r^2|\partial_v\phi^{n-6}|\big)\iint_{\mathcal R_{\ge\Lambda}}|\partial_u\phi^k|^2 \les \ve^{3/4} \mathcal X^{\tau_f}_{0,k}.
    \end{multline}

    \textsc{Estimate for $\mathbb E(\partial_u,\mathrm{ii},k,\mathcal R)$:}  Using \eqref{eq:u-pw-new} with $q=3/2-\delta$, Cauchy--Schwarz, and \eqref{eq:partial-u-integrated-2}, we obtain
    \begin{multline}
        \mathbb E(\partial_u,\mathrm{ii},k,\mathcal R)=  \iint_{\mathcal R\cap\mathcal{A}} |\partial_u\phi^{n-6}||\partial_u\phi^k||\partial_v\phi^k|+ \iint_{\mathcal R_{\ge\Lambda}} r^2 |\partial_u\phi^{n-6}||\partial_u\phi^k||\partial_v\phi^k| \\ \les      \ve^{3/4}\iint_{\mathcal R\cap\mathcal{A}} (r-M)^{3/2-\delta}|\partial_u\phi^k||\partial_v\phi^k| + \left(\iint_{\mathcal R_{\ge\Lambda}}r^{1-\delta}|\partial_u\phi^k|^2\right)^{1/2} \left(\iint_{\mathcal R_{\ge\Lambda}}r^{1+\delta}|r\partial_u\phi^{n-6}|^2|\partial_v\phi^k|^2\right)^{1/2} \\ \les \ve^{3/4}\mathcal X^{\tau_f}_{0,k} + \big(\mathcal X^{\tau_f}_{0,k}\big)^{1/2}\left(\sup_{\tau\in[\tau_1,\tau_2]}\mathcal E_{0,k}^{\tau_f}(\tau)\int_{\tau_1}^{\tau_2}\sup_{C^{\tau_f}(\tau)}|r\partial_u\phi^{n-6}|^2\,d\tau\right)^{1/2}  \les  \ve^{3/4}\mathcal X^{\tau_f}_{0,k}.
    \end{multline}

\textsc{Estimate for $\mathbb E(\partial_u,\mathrm{iii},k,\mathcal R)$:} Using Cauchy--Schwarz, we obtain
\begin{multline}
    \mathbb E(\partial_u,\mathrm{iii},k,\mathcal R) = \iint_{\mathcal R\cap \mathcal A}\ve\tau^{-1/2+\delta/2}(r-M)^2|\slashed\nabla\phi^k||\partial_u\phi^k|+  \iint_{\mathcal R_{\ge\Lambda}} \ve\tau^{-1/2+\delta/2}|\slashed\nabla\phi^k||\partial_u\phi^k|\\ \le \ve\tau_1^{-1/2+\delta/2}\iint_{\mathcal R\cap\mathcal A}(r-M)^2|\slashed\nabla\phi^k||\partial_u\phi^k|+ \ve\tau_1^{-1/2+\delta/2}\iint_{\mathcal R_{\ge\Lambda}}|\slashed\nabla\phi^k||\partial_u\phi^k|\\
    \les \ve\tau_1^{-1/2+\delta/2}\big(\mathcal X_{\delta,k}^{\tau_f}\big)^{1/2}\big(\mathcal X_{0,k}^{\tau_f}\big)^{1/2}+ \ve\tau_1^{-1/2+\delta/2}\mathcal X_{0,k}^{\tau_f}.
\end{multline}

\textsc{Estimate for $\mathbb E(\partial_u,\mathrm{iv},k,\mathcal R)$:} Using \eqref{eq:u-pw-new} with $q=3/2-\delta$, \eqref{eq:u-pw-3}, \eqref{eq:v-improved}, Cauchy--Schwarz, and \eqref{eq:partial-v-integrated-2}, we obtain
\begin{multline}
    \mathbb E(\partial_u,\mathrm{iv},k,\mathcal R)= \iint_{\mathcal R\cap\mathcal A}|\partial_u\phi^{n-6}||\partial_v\phi^{n-6}||\phi^k||\partial_u\phi^k|+\iint_{\mathcal R_{\ge\Lambda}}r^2|\partial_u\phi^{n-6}||\partial_v\phi^{n-6}||\phi^k||\partial_u\phi^k| \\ \les \ve\iint_{\mathcal R\cap\mathcal A}(r-M)^{3/2-\delta}|\partial_v\phi^{n-6}||\phi^k||\partial_u\phi^k| +\ve^2 \iint_{\mathcal R_{\ge\Lambda}}r^{-1}|\phi^k||\partial_u\phi^k| \\
    \les \ve\left(\iint_{\mathcal R\cap\mathcal A}(r-M)^{3-2\delta}|\phi^k|^2\right)^{1/2}\left(\iint_{\mathcal R\cap\mathcal A}|\partial_v\phi^{n-6}|^2|\partial_u\phi^k|^2\right)^{1/2} + \ve^2 \mathcal X^{\tau_f}_{0,k}\\
    \les \ve^{7/4} \tau_1^{-1+3\delta/4}\big(\mathcal X_{2\delta,k}^{\tau_f}\big)^{1/2}\big(\mathcal X_{0,k}^{\tau_f}\big)^{1/2}+\ve^2 \mathcal X^{\tau_f}_{0,k}.
\end{multline}

\textsc{Estimate for $\mathbb E(\partial_u,\mathrm{v},k,\mathcal R)$:} Using Cauchy--Schwarz, we obtain
\begin{multline}
   \mathbb E(\partial_u,\mathrm{v},k,\mathcal R)\le \ve\tau_1^{-1+\delta}\iint_{\mathcal R\cap\mathcal A}(r-M)^2|\phi^k||\partial_u\phi^k| +\ve\tau_1^{-1+\delta}\iint_{\mathcal R\cap\mathcal A}r^{-2}|\phi^k||\partial_u\phi^k|  \\\les 
   \ve\tau_1^{-1+\delta}\left(\iint_{\mathcal R\cap\mathcal A}(r-M)^{1+\delta}|\partial_u\phi^k|^2\right)^{1/2}\left(\iint_{\mathcal R\cap\mathcal A}(r-M)^{3-\delta}|\phi^k|^2\right)^{1/2}+ \ve\tau_1^{-1+\delta}\mathcal X^{\tau_f}_{0,k}\\\les  \ve\tau_1^{-1+\delta}\big(\mathcal X_{0,k}^{\tau_f}\big)^{1/2}\big(\mathcal X_{\delta,k}^{\tau_f}\big)^{1/2} +\ve\tau_1^{-1+\delta}\mathcal X^{\tau_f}_{0,k}.
\end{multline}

\textsc{Estimate for $\mathbb E(\partial_v,\mathrm{i},k,\mathcal R)$:} Using Cauchy--Schwarz, \eqref{eq:v-improved}, and \eqref{eq:partial-v-integrated-2}, we obtain
\begin{multline}
    \mathbb E(\partial_v,\mathrm{i},k,\mathcal R)= \iint_{\mathcal R\cap\mathcal A} |\partial_v\phi^{n-6}||\partial_v\phi^k||\partial_u\phi^k| + \iint_{\mathcal R_{\ge\Lambda}}r^2 |\partial_v\phi^{n-6}||\partial_v\phi^k||\partial_u\phi^k| \\ \les \left(\iint_{\mathcal R\cap\mathcal A} (r-M)^{1+\delta}|\partial_v\phi^k|^2 \right)^{1/2}\left(\iint_{\mathcal R\cap\mathcal A} (r-M)^{-1-\delta}|\partial_v\phi^{n-6}|^2|\partial_u\phi^k|^2 \right)^{1/2} +  \ve\iint_{\mathcal R_{\ge\Lambda}}|\partial_v\phi^k||\partial_u\phi^k| \\ \les \ve^{3/4}\tau_1^{-1+3\delta/4} \big(\mathcal X_{0,k}^{\tau_f}\big)^{1/2}\big(\mathcal X^{\tau_f}_{1+\delta}\big)^{1/2} + \ve\mathcal X_{0,k}^{\tau_f}.
\end{multline}

\textsc{Estimate for $\mathbb E(\partial_v,\mathrm{ii},k,\mathcal R)$:} Using \eqref{eq:u-pw-new} with $q=3/2-\delta$ and \eqref{eq:partial-u-integrated}, we obtain
\begin{multline}
    \mathbb E(\partial_v,\mathrm{ii},k,\mathcal R) = \iint_{\mathcal R\cap\mathcal A}|\partial_u\phi^{n-6}||\partial_u\phi^k|^2+\iint_{\mathcal R_{\ge\Lambda}} r^2|\partial_u\phi^{n-6}||\partial_v\phi^k|^2 \\  \les \ve\iint_{\mathcal R\cap\mathcal A}(r-M)^{3/2-\delta}|\partial_v\phi^k|^2 + \sup_{\tau\in[\tau_1,\tau_2]}\mathcal E_{0,k}^{\tau_f}(\tau)\int_{\tau_1}^{\tau_2}\sup_{C^{\tau_f}(\tau)}|r\partial_u\phi^{n-6}|\,d\tau \les \ve^{3/4}\mathcal X_{0,k}^{\tau_f}.
\end{multline}

\textsc{Estimate for $\mathbb E(\partial_v,\mathrm{iii},k,\mathcal R)$:} Using Cauchy--Schwarz as in the estimate for $\mathbb E(\partial_u,\mathrm{iii},k,\mathcal R)$, we obtain
\begin{multline}
    \mathbb E(\partial_v,\mathrm{iii},k,\mathcal R)\le  \ve\tau^{-1/2+\delta/2}_1\iint_{\mathcal R\cap\mathcal A}(r-M)^2|\slashed\nabla\phi^k||\partial_v\phi^k|+\ve\tau^{-1/2+\delta/2}_1\iint_{\mathcal R_{\ge\Lambda}}|\slashed\nabla\phi^k||\partial_v\phi^k|  \\ \les \ve\tau_1^{-1/2+\delta/2}\big(\mathcal X^{\tau_f}_{\delta,k}\big)^{1/2}\big(\mathcal X^{\tau_f}_{0,k}\big)^{1/2}+  \ve\tau_1^{-1/2+\delta/2}\mathcal X_{0,k}^{\tau_f} .
\end{multline}

\textsc{Estimate for $\mathbb E(\partial_v,\mathrm{iv},k,\mathcal R)$:} Using \eqref{eq:u-pw-new} with $q=3/2-\delta$, \eqref{eq:v-improved}, Cauchy--Schwarz, and \eqref{eq:partial-v-integrated-2}, we obtain
\begin{multline}
    \mathbb E(\partial_v,\mathrm{iv},k,\mathcal R)= \iint_{\mathcal R\cap\mathcal A}|\partial_u\phi^{n-6}||\partial_v\phi^{n-6}||\phi^k||\partial_v\phi^k|+\iint_{\mathcal R_{\ge\Lambda}}r^2|\partial_u\phi^{n-6}||\partial_v\phi^{n-6}||\phi^k||\partial_v\phi^k| \\ \les \ve\iint_{\mathcal R\cap\mathcal A}(r-M)^{3/2-\delta/2}|\partial_v\phi^{n-6}||\phi^k||\partial_v\phi^k|+\ve^2\iint_{\mathcal R_{\ge\Lambda}} r^{-1}|\phi^k||\partial_u\phi^k| \\ \les \ve\left(\iint_{\mathcal R\cap\mathcal A}(r-M)^{1+\delta}|\partial_v\phi^k|^2\right)^{1/2} \left(\iint_{\mathcal R\cap\mathcal A}(r-M)^{2-2\delta}|\partial_v\phi^{n-6}|^2|\phi^k|^2\right)^{1/2}+\ve^2\mathcal X^{\tau_f}_{0,k}\\ \les \ve^{7/4}\tau_1^{-1+3\delta/4}\big(\mathcal X_{2\delta,k}^{\tau_f}\big)^{1/2}\big(\mathcal X_{0,k}^{\tau_f}\big)^{1/2}+\ve^2\mathcal X^{\tau_f}_{0,k}.
\end{multline}

\textsc{Estimate for $\mathbb E(\partial_v,\mathrm{v},k,\mathcal R)$:} Using Cauchy--Schwarz as in the estimate for $\mathbb E(\partial_u,\mathrm{v},k,\mathcal R)$, we obtain 
\begin{multline}
    \mathbb E(\partial_v,\mathrm{v},k,\mathcal R)\le  \ve^2\tau^{-1+\delta}_1\iint_{\mathcal R\cap\mathcal A}(r-M)^2|\phi^k||\partial_v\phi^k|+\ve\tau^{-1+\delta}_1\iint_{\mathcal R_{\ge\Lambda}}r^{-2}|\phi^k||\partial_v\phi^k|  \\ \les \ve^2\tau_1^{-1+\delta}\big(\mathcal X_{0,k}^{\tau_f}\big)^{1/2}\big(\mathcal X_{\delta,k}^{\tau_f}\big)^{1/2} +\ve^2\tau_1^{-1+\delta}\mathcal X^{\tau_f}_{0,k}.
\end{multline}

\textsc{Estimate for $\mathbb E(Z,\mathrm{i},k,\mathcal R)$:} Using Cauchy--Schwarz, \eqref{eq:v-improved}, and \eqref{eq:partial-v-integrated-2}, we obtain
\begin{multline}
    \mathbb E(Z,\mathrm{i},k,\mathcal R)=  \iint_{\mathcal R\cap\mathcal A} (r-M)^2|\partial_v\phi^{n-6}||\phi^k||\partial_u\phi^k|+\iint_{\mathcal R_{\ge\Lambda}} r|\partial_v\phi^{n-6}||\phi^k||\partial_u\phi^k| \\ \les \left(\iint_{\mathcal R\cap\mathcal A}(r-M)^4|\phi^k|^2\right)^{1/2}\left(\iint_{\mathcal R\cap\mathcal A}|\partial_v\phi^{n-6}|^2|\partial_u\phi^k|^2\right)^{1/2} + \ve \iint_{\mathcal R_{\ge\Lambda}}r^{-1}|\phi^k||\partial_u\phi^k|\les \ve\mathcal X^{\tau_f}_{0,k}.
\end{multline}

\textsc{Estimate for $\mathbb E(Z,\mathrm{ii},k,\mathcal R)$:} Using \eqref{eq:u-pw-new} with $q=3/2-\delta$, \eqref{eq:partial-u-integrated}, and Cauchy--Schwarz, we obtain
\begin{multline}
    \mathbb E(Z,\mathrm{ii},k,\mathcal R)=  \iint_{\mathcal R\cap\mathcal A} (r-M)^2|\partial_u\phi^{n-6}||\phi^k||\partial_v\phi^k|+\iint_{\mathcal R_{\ge\Lambda}} r|\partial_u\phi^{n-6}||\phi^k||\partial_v\phi^k|  \\ \les \ve\iint_{\mathcal R\cap\mathcal A}(r-M)^{7/2-\delta}|\phi^k||\partial_v\phi^k|+ \sup_{\tau\in[\tau_1,\tau_2]}\mathcal E_{0,k}^{\tau_f}(\tau)\int_{\tau_1}^{\tau_2}\sup_{C^{\tau_f}(\tau)}|r\partial_u\phi^{n-6}|\,d\tau\les   \ve^{3/4}\mathcal X_{0,k}^{\tau_f}.
\end{multline}

\textsc{Estimate for $\mathbb E(Z,\mathrm{iii},k,\mathcal R)$:} Using Cauchy--Schwarz, we obtain
\begin{multline}
    \mathbb E(Z,\mathrm{iii},k,\mathcal R)\le \ve^{1/2}\tau_1^{-1/2+\delta/2}  \iint_{\mathcal R\cap\mathcal A}(r-M)^4|\phi^k||\slashed\nabla\phi^k|+ \ve^{1/2}\tau_1^{-1/2+\delta/2} \iint_{\mathcal R_{\ge\Lambda}}r^{-1}|\phi^k||\slashed\nabla\phi^k|  \\ \les \ve\tau_1^{-1/2+\delta/2}\mathcal X^{\tau_f}_{0,k}.
\end{multline}

\textsc{Estimate for $\mathbb E(Z,\mathrm{iv},k,\mathcal R)$:} Using \eqref{eq:u-pw-new} with $q=3/2-\delta$, \eqref{eq:v-improved}, and \eqref{eq:u-pw-3}, we obtain
\begin{multline}
    \mathbb E(Z,\mathrm{iv},k,\mathcal R)=  \iint_{\mathcal R\cap\mathcal A}(r-M)^2|\partial_u\phi^{n-6}||\partial_v\phi^{n-6}||\phi^k|^2+\iint_{\mathcal R_{\ge\Lambda}}r^{1-\delta}|\partial_u\phi^{n-6}||\partial_v\phi^{n-6}||\phi^k|^2  \\ \les \ve^{2}\iint_{\mathcal R\cap\mathcal A}(r-M)^{7/2-\delta}|\phi^k|^2 +\ve^2 \iint_{\mathcal R_{\ge\Lambda}}r^{-2}|\phi^k|^2\les \ve^{2}\mathcal X^{\tau_f}_{0,k}.
\end{multline}

\textsc{Estimate for $\mathbb E(Z,\mathrm{v},k,\mathcal R)$:} Using the definition of $\mathcal X_{0,k}^{\tau_f}$ directly, we obtain
\begin{equation}\label{eq:T-error-final}
    \mathbb E(Z,\mathrm{v},k,\mathcal R)\le  \ve\tau_1^{-1+\delta}\iint_{\mathcal R\cap\mathcal A}(r-M)^4|\phi^k|^2+\ve\tau_1^{-1+\delta}\iint_{\mathcal R_{\ge\Lambda}}r^{-3}|\phi^k|^2 \les \ve\tau_1^{-1+\delta}\mathcal X_{0,k}^{\tau_f}.
\end{equation}

\textsc{Proof of \eqref{eq:T-error}:} Inserting the estimates \eqref{eq:T-error-2}--\eqref{eq:T-error-final} into \eqref{eq:T-error-1} yields 
\begin{multline*}
     \mathbb E_{T,k}(\mathcal R)+\mathbb E_{Z,k}(\mathcal R) \les \mathbb T_k(\mathcal R)+ \ve^{3/4} \mathcal X^{\tau_f}_{0,k} +\ve\tau_1^{-1/2+\delta/2}\big(\mathcal X^{\tau_f}_{\delta,k}\big)^{1/2}\big(\mathcal X^{\tau_f}_{0,k}\big)^{1/2} + \ve^{7/4} \tau_1^{-1+3\delta/4}\big(\mathcal X_{2\delta,k}^{\tau_f}\big)^{1/2}\big(\mathcal X_{0,k}^{\tau_f}\big)^{1/2}\\ +  \ve^{3/4}\tau_1^{-1+3\delta/4} \big(\mathcal X_{0,k}^{\tau_f}\big)^{1/2}\big(\mathcal X^{\tau_f}_{1+\delta}\big)^{1/2}.
\end{multline*}
Applying now \cref{prop:trapping-1} and the bootstrap assumptions \eqref{eq:boot-energy-1}--\eqref{eq:boot-energy-3}, we obtain an estimate which beats \eqref{eq:T-error} and hence completes the proof of the proposition.
\end{proof}

\subsection{\texorpdfstring{$(r-M)^{-p}$}{(r-M)-p} nonlinear error estimates}\label{sec:horizon-error}

We now estimate the $(r-M)^{-p}$ error $\underline{\mathbb E}{}_{p,k}(\mathcal R)$. These estimates require some care and slightly different arguments will have to be used for different ranges $p$. It will therefore be convenient to check the estimates for specific values of $p$ and then interpolate using the following general inequality for a nonnegative function $w$ on a measure space $(X,\mu)$ and $p_1<p<p_2$, which is immediately obtained from H\"older's inequality:
\begin{equation}\label{eq:interpolation}
        \int_X w^p\,d\mu\le\left(\int_X w^{p_1}\,d\mu\right)^\frac{p_2-p}{p_2-p_1}\left(\int_X w^{p_2}\,d\mu\right)^\frac{p-p_1}{p_2-p_1}.
    \end{equation}

\begin{prop}\label{prop:horizon-error}
 For any $A\ge 1$,  $\ve_0$ sufficiently small, $\tau_f\in \mathfrak B$, and region $\mathcal R\subset\mathcal D^{\tau_f}$ as depicted in \cref{fig:butterfly}, it holds that 
\begin{equation} \label{eq:horizon-error}
    \underline{\mathbb E}_{p,k}(\mathcal R)\les \ve^{5/2} \begin{cases}
         \tau_1^{-2+\delta+p}  & \text{if $k=n-2$ and $p\in [\delta,2-\delta]$}  \\
         \tau_1^{\max\{-1-\delta+p,-1\}}  &  \text{if $k=n-1$ and $p\in[\delta,1+\delta]$}\\
          \tau^{\max\{0,-1+3\delta+p\}}_2 &  \text{if $k=n$ and $p\in[\delta,1+\delta]$}\\
          \end{cases}.
\end{equation}    
\end{prop}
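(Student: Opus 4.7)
The plan is to reduce $\underline{\mathbb E}_{p,k}(\mathcal R)$ to the menu of component bulk errors $\mathbb E((r-M)^{-p}\partial_u,\mathrm j,k,\mathcal R)$ defined in \cref{sec:structure}. Since $D\sim (r-M)^2$ near $\mathcal H^+$, the weight $(r-M)^{-p+2}$ in $\underline{\mathbb E}_{p,k}(\mathcal R)$ is equivalent to $D\cdot (r-M)^{-p}$, so \cref{prop:structure} gives $\underline{\mathbb E}_{p,k}(\mathcal R)\les \mathbb T_k(\mathcal R)+\sum_{\mathrm j}\mathbb E((r-M)^{-p}\partial_u,\mathrm j,k,\mathcal R)$, where the contribution from $\{r_0\le r\le\Lambda\}$ (on which $(r-M)^{-p+2}$ is bounded above and below and only $\mathfrak N^k_*$ contributes) has been absorbed into $\mathbb T_k(\mathcal R)$ and is estimated by \cref{prop:trapping-1}. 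It remains to estimate each $\mathrm j\in\{\mathrm i,\mathrm{ii},\mathrm{iii},\mathrm{iv},\mathrm v\}$.

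The easy terms $\mathrm j\in\{\mathrm i,\mathrm{iii},\mathrm{iv},\mathrm v\}$ are handled in the spirit of the $T$-error analysis in \cref{sec:T-error}, now pairing against the bulk weights of $\dot{\underline{\mathcal E}}{}^{\tau_f}_{p-1,k}$: for $\mathrm j=\mathrm i$ we pull $\sup_{\underline C(\tau)}|\partial_v\phi^{n-6}|$ out of each ingoing cone integral and apply the $L^1_vL^\infty$ estimate \eqref{eq:partial-v-integrated}; for $\mathrm j\in\{\mathrm{iii},\mathrm v\}$ the hard-coded $\ve\tau^{-1/2+\delta/2}$ or $\ve^2\tau^{-1+\delta}$ prefactors combine with Cauchy--Schwarz against $(r-M)^{-p+1}|\partial_u\psi^k|^2$ and $(r-M)^{-p+3}(|\slashed\nabla\phi^k|^2+|\phi^k|^2)$; for $\mathrm j=\mathrm{iv}$ the pointwise estimates \eqref{eq:u-pw-new} (at the cheap choice $q=3/2-\delta$) and \eqref{eq:v-improved} reduce matters to a quadratic integral in $\phi^k$ that is tame by Cauchy--Schwarz.

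The main obstacle is $\mathrm j=\mathrm{ii}$, i.e., $\iint_{\mathcal R\cap\{r\le r_0\}}(r-M)^{-p}|\partial_u\psi^k||\partial_u\phi^{n-6}||\partial_v\phi^k|$. The crux is that $|\partial_v\phi^k|^2$ appears in none of our $(r-M)^{-p}$-weighted fluxes or bulks; our only handle in the near region is the Morawetz bulk with the fixed weight $(r-M)^{1+\delta}$. Balancing this against the $(r-M)^{-p+1}|\partial_u\psi^k|^2$ bulk via Cauchy--Schwarz forces the pointwise estimate on $|\partial_u\phi^{n-6}|$ to provide $(r-M)^q$ with $q=(p+2+\delta)/2\in[3/2-\delta,2]$. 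For $p$ near $2-\delta$ this sits at the top of the range of \eqref{eq:u-pw-new}, where the pointwise bound grows like $\tau^{\beta(q)}$ with $\beta(q)=(1+\delta)(q-3/2+\delta)/(1+2\delta)$, reaching $\tfrac12+O(\delta)$ at $p=2-\delta$. This is the same phenomenon that will later force the topmost-order energy $\mathcal X_{1+\delta,n}$ to grow mildly in \eqref{eq:boot-energy-3}.

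Because the butterfly region $\mathcal R$ extends essentially up to $\tau_f$, simply pulling out $\sup_\tau\tau^{\beta(q)}$ would give a $\tau_f$-dependent bound; instead, I dyadically decompose $[\tau_1,\tau_f]$ into $[L_i,L_{i+1}]$ with $L_i=2^i\tau_1$ and apply Cauchy--Schwarz on each slab to bound the contribution by
\[
\ve\,L_{i+1}^{\beta(q)}\,\big(\mathcal X^{\tau_f}_{p,k}(L_i,L_{i+1})\big)^{1/2}\big(\mathcal X^{\tau_f}_{0,k}(L_i,L_{i+1})\big)^{1/2}.
\]
A short computation using the bootstrap assumptions \eqref{eq:boot-energy-1}--\eqref{eq:boot-energy-3} shows that the resulting $L_i$-exponent is strictly below the target exponent of $\tau_1$ in \eqref{eq:horizon-error} for $k\in\{n-2,n-1\}$, so the geometric series converges and yields the claim with a spare factor of $\ve^{1/2}$. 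At $k=n$ one has $\beta(3/2+\delta)=O(\delta)$, and the growth $\tau_2^{4\delta}$ already permitted by \eqref{eq:boot-energy-3} at $p=1+\delta$ absorbs the pointwise growth without any dyadic decomposition. Finally, the intermediate values of $p$ are obtained by H\"older interpolation in the integrand against $(r-M)^{-p}$ in the form \eqref{eq:interpolation}, just as in the interpolation step for \eqref{eq:u-pw-new}.
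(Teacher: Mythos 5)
Your overall decomposition and strategy match the paper's proof: split into $\mathbb T_k$ plus the five component errors $\mathbb E((r-M)^{-p}\partial_u,\mathrm j,k,\mathcal R)$, handle $\mathrm i$ by the $L^1_vL^\infty$ estimate \eqref{eq:partial-v-integrated}, handle $\mathrm{iii},\mathrm v$ by Cauchy--Schwarz against the $(r-M)^{-p}$ bulk, and handle $\mathrm{ii}$ by choosing the exponent $q$ in the interpolated pointwise bound \eqref{eq:u-pw-new} to balance the $(r-M)^{1+\delta}$-weighted Morawetz bulk for $\partial_v\phi^k$ against the $(r-M)^{-p+1}$-weighted bulk for $\partial_u\psi^k$, with dyadic-in-$\tau$ decomposition at the anomalous high-$p$ cases and interpolation in $p$ at the end. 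Your explicit formula $q=(p+2+\delta)/2$ is a clean way to phrase what the paper does with the specific choices $q=3/2\pm\delta$ and $q=2$; for $p$ small this formula dips below the admissible range $[3/2-\delta,2]$, but one simply replaces it with $q=3/2-\delta$ there (as the paper does for $\mathbb E((r-M)^{-\delta}\partial_u,\mathrm{ii},k,\mathcal R)$), since $(r-M)^q\ge (r-M)^{3/2-\delta}$ for $q\le 3/2-\delta$.

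There is however a concrete gap in your treatment of $\mathrm j=\mathrm{iv}$. You claim the ``cheap'' choice $q=3/2-\delta$ together with \eqref{eq:v-improved} reduces matters to a tame quadratic. Try it: with $|\partial_u\phi^{n-6}|\les\ve(r-M)^{3/2-\delta}$ and $|\partial_v\phi^{n-6}|\les\ve$, the integrand becomes $\ve^2(r-M)^{-p+3/2-\delta}|\phi^k||\partial_u\psi^k|$. Cauchy--Schwarz splits this as
\[
\ve^2\left(\iint(r-M)^{-p+3}|\phi^k|^2\right)^{1/2}\left(\iint(r-M)^{-p-2\delta}|\partial_u\psi^k|^2\right)^{1/2},
\]
and the second factor carries the weight $(r-M)^{-p-2\delta}$, which is strictly worse than both the bulk weight $(r-M)^{-p+1}$ and the flux weight $(r-M)^{-p}$; any other split makes the $|\phi^k|^2$ factor uncontrollable. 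You are missing exactly $(r-M)^{\delta}$ on the pointwise bound, which is why the paper takes $q=3/2$ (giving the weight $(r-M)^{-p+3/2}$, which then splits cleanly into $(r-M)^{(-p+3)/2}$ and $(r-M)^{-p/2}$) and accepts the resulting mild $\tau^{\delta(1+\delta)/(1+2\delta)}$ growth by transferring it onto a $\tau^{2\delta}$-weighted variant of the $L^2_vL^\infty$ estimate for $\partial_v\phi^{n-6}$, obtained by reweighting the dyadic sum in \cref{prop:integrated-v-estimates}. Once you make that substitution, your argument for $\mathrm j=\mathrm{iv}$ lines up with the paper's; the remaining cases and exponent bookkeeping for $\mathrm j=\mathrm{ii}$ (including the observation that for $k=n$ the permitted $\tau_2^{4\delta}$ growth absorbs $\sup_\tau\tau^{\beta}$ without a dyadic sum) are correct.
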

\begin{proof} Unpacking our notation, we have
\begin{equation}\label{eq:h-error-1}
      \underline{\mathbb E}_{p,k}(\mathcal R)\les \mathbb T_k(\mathcal R)+\sum_{\mathrm j\in\{\mathrm{i},\mathrm{ii},\mathrm{iii},\mathrm{iv},\mathrm{v}\}}\mathbb E(( r-M)^{-p}\partial_u,\mathrm{j},k,\mathcal R).
\end{equation}
As before, $\mathbb T_k(\mathcal R)$ was estimated in \cref{prop:trapping-1}, so we now proceed to estimate the remaining terms.

    \textsc{Estimate for $\mathbb E(( r-M)^{-p}\partial_u,\mathrm{i},k,\mathcal R)$:} 
    Using \eqref{eq:partial-v-integrated}, we obtain
\begin{multline}\label{eq:h-error-2}
    \mathbb E(( r-M)^{-p}\partial_u,\mathrm{i},k,\mathcal R)= \iint_{\mathcal R_{\le\Lambda}}(r-M)^{-p}|\partial_v\phi^{n-6}||\partial_u\phi^k||\partial_u\psi^k| \\ \les \sup_{\tau\in[\tau_1,\tau_2]}\underline{\mathcal E}{}_{p,k}^{\tau_f}(\tau) \int_{\tau_1}^{\tau_2}\sup_{\underline C^{\tau_f}(\tau)}|\partial_v\phi^{n-6}|\,d\tau \les \ve^{3/4} \mathcal X_{p,k}^{\tau_f}.
\end{multline}

 \textsc{Estimate for $\mathbb E(( r-M)^{-\delta}\partial_u,\mathrm{ii},k,\mathcal R)$:} Using \eqref{eq:u-pw-new} with $p=3/2-\delta$ and Cauchy--Schwarz as in the estimate for $\mathbb E(\partial_u,\mathrm{ii},k,\mathcal R)$, we obtain
 \begin{multline}\label{eq:h-error-3}
     \mathbb E(( r-M)^{-\delta}\partial_u,\mathrm{ii},k,\mathcal R) = \iint_{\mathcal R\cap \mathcal A}(r-M)^{-\delta}|\partial_u\phi^{n-6}||\partial_v\phi^k||\partial_u\psi^k| \\ \les \ve\iint_{\mathcal R\cap\mathcal A}(r-M)^{3/2-2\delta}|\partial_v\phi^k||\partial_u\psi^k| \les \ve \mathcal X_{0,k}^{\tau_f}.
 \end{multline}

\textsc{Estimate for $\mathbb E(( r-M)^{-1+3\delta}\partial_u,\mathrm{ii},k,\mathcal R)$:} Using \eqref{eq:u-pw-new} with $q=3/2-\delta$ and Cauchy--Schwarz, we obtain
\begin{multline}\label{eq:h-error-4}
    \mathbb E(( r-M)^{-1+3\delta}\partial_u,\mathrm{ii},k,\mathcal R)= \iint_{\mathcal R\cap\mathcal A}(r-M)^{-1+3\delta}|\partial_u\phi^{n-6}||\partial_v\phi^k||\partial_u\psi^k| \\ \les \ve\iint_{\mathcal R\cap\mathcal A}(r-M)^{1/2+2\delta}|\partial_v\phi^k||\partial_u\psi^k|\les \ve\left(\iint_{\mathcal R\cap \mathcal A}(r-M)^{1+\delta}|\partial_v\phi^{k}|^2\right)^{1/2} \left(\iint_{\mathcal R\cap \mathcal A}(r-M)^{3\delta}|\partial_u\psi^{k}|^2\right)^{1/2} \\ \les \ve\big(\mathcal X^{\tau_f}_{0,k}\big)^{1/2}\big(\mathcal X^{\tau_f}_{1-3\delta,k}\big)^{1/2}.
\end{multline}

\textsc{Estimate for $\mathbb E(( r-M)^{-1-\delta}\partial_u,\mathrm{ii},k,\mathcal R)$ with $k=n-1,n$:} This estimate is slightly anomalous. Recall the dyadic time steps $L_i=2^i$, set $\mathcal A_i\doteq \mathcal A \cap\{L_i\le\tau\le L_{i+1}\}$, and let $i_0=\lfloor \log_2\tau_1\rfloor$. Using \eqref{eq:u-pw-new} with $q=3/2+\delta$ and Cauchy--Schwarz, we obtain
\begin{multline*}
 \iint_{\mathcal R\cap\mathcal A_i}(r-M)^{-1-\delta}|\partial_u\phi^{n-6}||\partial_v\phi^k||\partial_u\psi^k| \les \ve L_i^{2\delta(\frac{1+\delta}{1+2\delta})}\iint_{\mathcal R\cap\mathcal A_i}(r-M)^{1/2}|\partial_v\phi^k||\partial_u\psi^k| \\\les \ve L_i^{2\delta(\frac{1+\delta}{1+2\delta})}\left(\iint_{\mathcal R\cap \mathcal A_i}(r-M)^{1+\delta}|\partial_v\phi^{k}|^2\right)^{1/2} \left(\iint_{\mathcal R\cap \mathcal A_i}(r-M)^{-\delta}|\partial_u\psi^{k}|^2\right)^{1/2} \\
 \les \ve L_i^{2\delta(\frac{1+\delta}{1+2\delta})} \big(\mathcal X^{\tau_f}_{0,k}(L_i,L_{i+1})\big)^{1/2}\big(\mathcal X^{\tau_f}_{1+\delta,k}(L_i,L_{i+1})\big)^{1/2}.
\end{multline*}
For $k=n-1$, we use the bootstrap assumption \eqref{eq:boot-energy-2} and sum over $i\ge i_0$ to obtain
\begin{equation}\label{eq:h-error-5}
    \mathbb E(( r-M)^{-1-\delta}\partial_u,\mathrm{ii},n-1,\mathcal R)\les \ve^{3} \tau_1^{-1/2+2\delta(\frac{1+\delta}{1+2\delta})},
\end{equation}
and for $k=n$, we use the bootstrap assumption \eqref{eq:boot-energy-3} and sum over $i_0\le i\le \lceil \log_2\tau_2\rceil$ to obtain
\begin{equation}\label{eq:h-error-extra}
    \mathbb E(( r-M)^{-1-\delta}\partial_u,\mathrm{ii},n,\mathcal R)\les \ve^{3}\tau_2^{2\delta +2\delta(\frac{1+\delta}{1+2\delta})}\les \ve^{3}\tau^{4\delta}_2.
\end{equation}

  \textsc{Estimate for $\mathbb E(( r-M)^{-2+\delta}\partial_u,\mathrm{ii},n-2,\mathcal R)$:} This estimate is slightly anomalous and we again decompose dyadically. Using \eqref{eq:u-pw-new} with $q=2$, Cauchy--Schwarz, and the bootstrap assumption \eqref{eq:boot-energy-1}, we obtain
  \begin{multline*}
     \iint_{\mathcal R\cap \mathcal A_i}(r-M)^{-2+\delta}|\partial_u\phi^{n-6}||\partial_v\phi^{n-2}||\partial_u\psi^{n-2}|  \les \ve^{3/4} L_i^{1/2+\delta/2}\iint_{\mathcal R\cap \mathcal A_i}(r-M)^{\delta}|\partial_v\phi^{n-2}||\partial_u\psi^{n-2}| \\\les \ve L_i^{1/2+\delta/2}\left(\iint_{\mathcal R\cap \mathcal A_i}(r-M)^{1+\delta}|\partial_v\phi^{n-2}|^2\right)^{1/2}\left(\iint_{\mathcal R\cap \mathcal A_i}(r-M)^{-1+\delta}|\partial_u\psi^{n-2}|^2\right)^{1/2}\\\les \ve L_i^{1/2+\delta/2}\big(\mathcal X^{\tau_f}_{0,n-2}(L_i,L_{i+1})\big)^{1/2}\big(\mathcal X^{\tau_f}_{2-\delta,n-2}(L_i,L_{i+1})\big)^{1/2}\les \ve^{3} L_i^{-1/2+\delta}.
  \end{multline*}
  Summing over $i\ge i_0$ yields
  \begin{equation}
      \label{eq:h-error-6}   \mathbb E(( r-M)^{-2+\delta}\partial_u,\mathrm{ii},n-2,\mathcal R) \les\ve^{3} \tau_1^{-1/2+\delta}.
  \end{equation}

    \textsc{Estimate for $\mathbb E(( r-M)^{-p}\partial_u,\mathrm{iii},k,\mathcal R)$:} Using Cauchy--Schwarz, we obtain
    \begin{multline}\label{eq:h-error-7}
        \mathbb E(( r-M)^{-p}\partial_u,\mathrm{iii},k,\mathcal R) \le \ve\tau^{-1/2+\delta/2}_1 \iint_{\mathcal R\cap\mathcal A}(r-M)^{-p+2}|\slashed\nabla\phi^k||\partial_u\psi^k| \\ \les \ve\tau_1^{-1/2+\delta/2}\iint_{\mathcal R\cap\mathcal A}\big((r-M)^{-p+1}|\partial_u\psi^k|^2+(r-M)^{-p+3}|\slashed\nabla\phi^k|^2\big) \les  \ve\tau^{-1/2+\delta/2}_1 \mathcal X_{p,k}^{\tau_f}.
    \end{multline}

\textsc{Estimate for $\mathbb E(( r-M)^{-p}\partial_u,\mathrm{iv},k,\mathcal R)$:} This estimate is slightly anomalous. By revisiting the proof of \cref{prop:integrated-v-estimates} (simply multiply \eqref{eq:v-integral-help} by $L_i^{2\delta}$ before summing), we find the alternate estimate
\begin{equation*}
    \int_{\tau_1}^\infty \tau^{2\delta} \sup_{\underline C^{\tau_f}(\tau)}|\partial_v\phi^{n-6}|^2 \,d\tau \les \ve^{3/2}\tau_1^{-2+7\delta/2}.
\end{equation*}
Using \eqref{eq:u-pw-new} with $q=3/2$, Cauchy--Schwarz, and our new integrated estimate, we obtain
\begin{multline}\label{eq:h-error-8}
        \mathbb E(( r-M)^{-p}\partial_u,\mathrm{iv},k,\mathcal R) \les  \iint_{\mathcal R\cap\mathcal A}( r-M)^{-p}|\partial_u\phi^{n-6}||\partial_v\phi^{n-6}||\phi^k||\partial_u\psi^k|\\ \les \ve \iint_{\mathcal R\cap\mathcal A}(r-M)^{-p+3/2}\tau^{\delta(\frac{1+\delta}{1+2\delta})}|\partial_v\phi^{n-6}||\phi^k||\partial_u\psi^k| \les \ve \left(\iint_{\mathcal R\cap\mathcal A}(r-M)^{-p+3}|\phi^k|^2\right)^{1/2}\\\cdot\left(\iint_{\mathcal R\cap\mathcal A}(r-M)^{-p}\tau^{2\delta(\frac{1+\delta}{1+2\delta})}|\partial_v\phi^{n-6}|^2|\partial_u\psi^k|^2\right)^{1/2} \les\ve^{7/4}\tau_1^{-1+7\delta/4}\mathcal X^{\tau_f}_{p,k}.
    \end{multline}

\textsc{Estimate for $\mathbb E(( r-M)^{-p}\partial_u,\mathrm{v},k,\mathcal R)$:} Using Cauchy--Schwarz, we obtain
\begin{equation}\label{eq:h-error-fin}
        \mathbb E(( r-M)^{-p}\partial_u,\mathrm{v},k,\mathcal R) \les  \ve\tau_1^{-1+\delta}\iint_{\mathcal R_{\le\Lambda}} (r-M)^{-p+2}|\phi^k||\partial_u\psi^k| \les \ve\tau_1^{-1+\delta}\mathcal X_{p,k}^{\tau_f}.
    \end{equation}

\textsc{Proof of \eqref{eq:horizon-error}:} Using \cref{prop:trapping-1}, \eqref{eq:h-error-1}--\eqref{eq:h-error-fin}, and the bootstrap assumptions \eqref{eq:boot-energy-1}--\eqref{eq:boot-energy-3}, we obtain
\begin{gather*}
    \underline{\mathbb E}_{\delta,n-2}(\mathcal R)\les \ve^{5/2}\tau_1^{-2+2\delta},\quad  \underline{\mathbb E}_{2-\delta,n-2}(\mathcal R)\les \ve^{5/2},\\
    \underline{\mathbb E}_{\delta,n-1}(\mathcal R)\les \ve^{5/2}\tau_1^{-1},\quad \underline{\mathbb E}_{1+\delta,n-1}(\mathcal R)\les \ve^{5/2},\\
    \underline{\mathbb E}_{\delta,n}(\mathcal R)\les \ve^{5/2},\quad \underline{\mathbb E}_{1-3\delta,n}(\mathcal R)\les \ve^{5/2},\quad \underline{\mathbb E}_{1+\delta,n}(\mathcal R)\les \ve^{5/2}\tau_2^{4\delta}.
\end{gather*}
Using \eqref{eq:interpolation} to interpolate, we obtain from this exactly \eqref{eq:horizon-error}.
\end{proof}

\subsection{\texorpdfstring{$r^p$}{r p} nonlinear error estimates}\label{sec:rp-error}

Finally, we estimate the $r^p$ error $\mathbb E_{p,k}(\mathcal R)$. 

\begin{prop}\label{prop:rp-error}   For any $A\ge 1$,  $\ve_0$ sufficiently small, $\tau_f\in \mathfrak B$, and region $\mathcal R$ as depicted in \cref{fig:butterfly}, it holds that 
\begin{equation}\label{eq:rp-error}
  \mathbb E_{p,k}(\mathcal R)\les   \ve^{5/2} \begin{cases}
         \tau_1^{-2+\delta+p}  & \text{if $k=n-2$ and $p\in[\delta,2-\delta]$}  \\
         \tau_1^{\max\{-1-\delta+p,-1\}}  &  \text{if $k=n-1$ and $p\in[\delta,1+\delta]$}\\
          \tau^{\max\{0,-1+3\delta+p\}}_2 &  \text{if $k=n$ and $p\in[\delta,1+\delta]$}\\
        \end{cases}.
\end{equation}
\end{prop}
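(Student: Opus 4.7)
The proof follows the scheme of the horizon error estimate in \cref{prop:horizon-error}, adapted to the far region. I first invoke \cref{prop:structure} and observe that the medium-$r$ piece $|\mathfrak N^k_*|$ is supported in $\{r_0\le r\le\Lambda\}$ and thus does not contribute to $\mathbb E_{p,k}(\mathcal R)$, which is integrated over $\mathcal R_{\ge\Lambda}$. This yields
\[\mathbb E_{p,k}(\mathcal R)\les\sum_{\mathrm{j}\in\{\mathrm{i},\mathrm{ii},\mathrm{iii},\mathrm{iv},\mathrm{v}\}}\mathbb E(r^p\partial_v,\mathrm{j},k,\mathcal R).\]
The plan is to bound each of the five contributions at endpoint values of $p$, namely $p\in\{\delta,2-\delta\}$ for $k=n-2$; $p\in\{\delta,1+\delta\}$ for $k=n-1$; and $p\in\{\delta,1-3\delta,1+\delta\}$ for $k=n$ (the middle endpoint being required to accommodate the piecewise-linear target at top order), and then to interpolate via H\"older's inequality~\eqref{eq:interpolation}.

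The endpoint estimates are carried out in direct analogy with the proof of \cref{prop:horizon-error}. Type~(i) is bounded using $r^2|\partial_v\phi^{n-6}|\les\ve$ from \cref{prop:pointwise-v}, Young's inequality, and the observation that $r^{p-1}|\partial_u\phi^k|^2$ is absorbed into $r^{1-\delta}|\partial_u\phi^k|^2\subset\mathcal M_{-1,k}^{\tau_f}$ for $p\le 2-\delta$; this yields $\ve\mathcal X_{p,k}^{\tau_f}+\ve\mathcal X_{0,k}^{\tau_f}$. Type~(ii), which turns out to be the dominant contribution, is bounded via $r|\partial_u\phi^{n-6}|\les\ve^{1/2}$ from~\eqref{eq:u-pw-3} together with Cauchy--Schwarz on $|\partial_v\psi^k||\partial_v\phi^k|$, yielding $\ve^{1/2}\mathcal X_{p,k}^{\tau_f}$. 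Types~(iii)--(v) are handled by Cauchy--Schwarz together with the pointwise bounds of \cref{prop:pointwise-u,prop:pointwise-v} and the explicit $\tau$-decaying prefactors built into \cref{prop:structure}, producing $\ve\tau_1^{-1/2+\delta/2}\mathcal X_{p,k}^{\tau_f}$, $\ve^{3/2}\mathcal X_{p,k}^{\tau_f}$, and $\ve^2\tau_1^{-1+\delta}\mathcal X_{p,k}^{\tau_f}$ respectively, after absorbing the surplus $r^{-1}$-weights by noting that $r\ge\Lambda$ in $\mathcal R_{\ge\Lambda}$. Inserting the bootstrap assumptions~\eqref{eq:boot-energy-1}--\eqref{eq:boot-energy-3} at the chosen endpoints produces the bounds $\mathbb E_{\delta,n-2}(\mathcal R)\les\ve^{5/2}\tau_1^{-2+2\delta}$, $\mathbb E_{2-\delta,n-2}(\mathcal R)\les\ve^{5/2}$, $\mathbb E_{\delta,n-1}(\mathcal R)\les\ve^{5/2}\tau_1^{-1}$, $\mathbb E_{1+\delta,n-1}(\mathcal R)\les\ve^{5/2}$, $\mathbb E_{\delta,n}(\mathcal R)\les\ve^{5/2}$, $\mathbb E_{1-3\delta,n}(\mathcal R)\les\ve^{5/2}$, and $\mathbb E_{1+\delta,n}(\mathcal R)\les\ve^{5/2}\tau_2^{4\delta}$, from which~\eqref{eq:rp-error} follows by interpolation between adjacent endpoints.

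The main obstacle is that type~(ii) saturates the target estimate exactly: the pointwise bound for $r\partial_u\phi^{n-6}$ in the far region carries no $\tau$-decay, so the estimate $\ve^{1/2}\mathcal X_{p,k}^{\tau_f}$ cannot be improved at topmost order without invoking a growing pointwise estimate analogous to~\eqref{eq:u-pw-new}. This is the direct analogue of the anomalous horizon term $\mathbb E((r-M)^{-1-\delta}\partial_u,\mathrm{ii},n,\mathcal R)$ treated in the proof of \cref{prop:horizon-error} and is precisely what forces the $\tau_2^{4\delta}$ growth at the topmost order $(p,k)=(1+\delta,n)$.
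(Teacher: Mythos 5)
Your proof is mathematically correct and follows the same route as the paper: decompose via \cref{prop:structure}, estimate each type at endpoint values of $p$ using the pointwise bounds of \cref{prop:pointwise-u,prop:pointwise-v} together with Cauchy--Schwarz or Young, then interpolate by H\"older's inequality~\eqref{eq:interpolation}. (Minor imprecisions: the improved estimate \eqref{eq:u-pw-3} gives $r|\partial_u\phi^{n-6}|\les\ve$, not $\ve^{1/2}$; and type~(iv) yields $\ve^2\mathcal X_{p,k}^{\tau_f}$ in the paper rather than your $\ve^{3/2}$. Neither affects the conclusion.)

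However, your closing paragraph misreads the structure of the argument. The type~(ii) term is in fact the \emph{least} anomalous of the errors here: the paper handles it in a single line, applying the bounded pointwise estimate \eqref{eq:u-pw-3} and Young's inequality to obtain $\ve\mathcal X_{p,k}^{\tau_f}$, with no dyadic decomposition, no absorption argument, and no growing pointwise estimate. The $\tau_2^{4\delta}$ growth at $(p,k)=(1+\delta,n)$ in \eqref{eq:rp-error} is simply \emph{inherited} from the bootstrap assumption \eqref{eq:boot-energy-3} on $\mathcal X_{1+\delta,n}^{\tau_f}$; it is not \emph{caused} by anything in the $r^p$ error estimate. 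The genuinely anomalous terms are $\mathbb E((r-M)^{-1-\delta}\partial_u,\mathrm{ii},n,\mathcal R)$ in \cref{prop:horizon-error} (which does require a dyadic split and the growing pointwise estimate \eqref{eq:u-pw-new} with $q=3/2+\delta$), together with the $T$-energy error $\mathbb E(\partial_v,\mathrm{i},k,\mathcal R)$, which is what forces $p$ above $1$ at top order in the first place; see the remark at the end of \cref{sec:intro-outline}. Your claim that the $r^p$ type~(ii) term is the direct analogue of the anomalous horizon term, or that it ``forces'' the growth, is therefore incorrect, although it does not invalidate your estimates.
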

\begin{proof} Unpacking our notation, it holds that
\begin{equation}\label{eq:rp-error-proof-1}
      {\mathbb E}_{p,k}(\mathcal R)\les \sum_{\mathrm j\in\{\mathrm{i},\mathrm{ii},\mathrm{iii},\mathrm{iv},\mathrm{v}\}}\mathbb E(r^p\partial_v,\mathrm{j},k,\mathcal R).
\end{equation}
We now estimate each of the terms.

\textsc{Estimate for $\mathbb E(r^\delta\partial_v,\mathrm{i},k,\mathcal R)$:} Using \eqref{eq:v-improved} and Cauchy--Schwarz, we obtain
\begin{multline}\label{eq:rp-error-proof-2}
  \mathbb E(r^\delta\partial_v,\mathrm{i},k,\mathcal R)=      \iint_{\mathcal R_{\ge\Lambda}} r^{1+\delta}|\partial_v\phi^{n-6}||\partial_v\psi^{k}||\partial_u\phi^{k}|\les \ve\iint_{\mathcal R_{\ge\Lambda}}r^{-1+\delta}|\partial_v\psi^{k}||\partial_u\phi^{k}| \\ \les \ve\left(\iint_{\mathcal R_{\ge\Lambda}}r^{1-\delta}|\partial_u\phi^{k}|^2\right)^{1/2}\left(\iint_{\mathcal R_{\ge\Lambda}}r^{-3+3\delta}|\partial_v\psi^k|^2\right)^{1/2}\les \ve \mathcal X^{\tau_f}_{0,k}.
\end{multline}

\textsc{Estimate for $\mathbb E(r^{1+\delta}\partial_v,\mathrm{i},k,\mathcal R)$:}  Using \eqref{eq:v-improved} and Cauchy--Schwarz, we obtain
\begin{multline}\label{eq:rp-error-proof-3}
     \mathbb E(r^{1+\delta}\partial_v,\mathrm{i},k,\mathcal R)=      \iint_{\mathcal R_{\ge\Lambda}} r^{2+\delta}|\partial_v\phi^{n-6}||\partial_v\psi^{k}||\partial_u\phi^{k}|\les \ve\iint_{\mathcal R_{\ge\Lambda}} r^{\delta}|\partial_v\psi^{k}||\partial_u\phi^{k}| \\ \les \ve \left(\iint_{\mathcal R_{\ge\Lambda}}r^{1-\delta}|\partial_u\phi^{k}|^2\right)^{1/2}\left(\iint_{\mathcal R_{\ge\Lambda}}r^{-1+3\delta}|\partial_v\psi^k|^2\right)^{1/2}\les \ve \big( \mathcal X^{\tau_f}_{0,k}\big)^{1/2} \big(\mathcal X^{\tau_f}_{3\delta,k}\big)^{1/2}.
\end{multline}

\textsc{Estimate for $\mathbb E(r^{2-\delta}\partial_v,\mathrm{i},k,\mathcal R)$:}  Using \eqref{eq:v-improved} and Cauchy--Schwarz, we obtain
\begin{multline}\label{eq:rp-error-proof-4}
     \mathbb E(r^{2-\delta}\partial_v,\mathrm{i},k,\mathcal R)=      \iint_{\mathcal R_{\ge\Lambda}} r^{3-\delta}|\partial_v\phi^{n-6}||\partial_v\psi^{k}||\partial_u\phi^{k}|\les \ve\iint_{\mathcal R_{\ge\Lambda}} r^{1-\delta}|\partial_v\psi^{k}||\partial_u\phi^{k}| \\ \les \ve \left(\iint_{\mathcal R_{\ge\Lambda}}r^{1-\delta}|\partial_u\phi^{k}|^2\right)^{1/2}\left(\iint_{\mathcal R_{\ge\Lambda}}r^{1-\delta}|\partial_v\psi^k|^2\right)^{1/2}\les \ve \big( \mathcal X^{\tau_f}_{0,k}\big)^{1/2} \big(\mathcal X^{\tau_f}_{2-\delta,k}\big)^{1/2}.
\end{multline}

       \textsc{Estimate for $\mathbb E(r^p\partial_v,\mathrm{ii},k,\mathcal R)$:} Using \eqref{eq:u-pw-3}, the trivial estimate $|\partial_v\phi^k|\les r^{-1}|\partial_v\psi^k|+r^{-1}|\phi^k|$, and Young's inequality, we obtain
       \begin{equation}\label{eq:rp-error-proof-5}
           \mathbb E(r^p\partial_v,\mathrm{ii},k,\mathcal R)= \iint_{\mathcal R_{\ge\Lambda}} r^{p+1}|\partial_u\phi^{n-6}||\partial_v\psi^k||\partial_v\phi^k| \les \ve\iint_{\mathcal R_{\ge\Lambda}} r^{p-1}\big(|\partial_v\psi^k|^2+|\phi^k|^2\big)\les\ve\mathcal X^{\tau_f}_{p,k}.
       \end{equation}

 \textsc{Estimate for $\mathbb E(r^p\partial_v,\mathrm{iii},k,\mathcal R)$:} 
Using Cauchy--Schwarz, we obtain
 \begin{equation}\label{eq:rp-error-proof-6}
     \mathbb E(r^p\partial_v,\mathrm{iii},k,\mathcal R) = \ve\tau_1^{-1/2+\delta/2} \iint_{\mathcal R_{\ge\Lambda}} r^{p-1}|\slashed\nabla\phi^k||\partial_v\psi^k|  \les \ve\tau_1^{-1/2+\delta/2} \mathcal X^{\tau_f}_{p,k}.
 \end{equation}

  \textsc{Estimate for $\mathbb E(r^p\partial_v,\mathrm{iv},k,\mathcal R)$:} Using \eqref{eq:v-improved} and \eqref{eq:u-pw-3}, we obtain
  \begin{equation}\label{eq:rp-error-proof-7}
      \mathbb E(r^p\partial_v,\mathrm{iii},k,\mathcal R) = \iint_{\mathcal R_{\ge\Lambda}}r^{p+1}|\partial_u\phi^{n-6}||\partial_v\phi^{n-6}||\phi^k||\partial_v\psi^k|\les \ve^2\iint_{\mathcal R_{\ge\Lambda}}r^{p-2}|\phi^k||\partial_v\psi^k|\les \ve^2\mathcal X^{\tau_f}_{p,k}.
  \end{equation}

   \textsc{Estimate for $\mathbb E(r^p\partial_v,\mathrm{v},k,\mathcal R)$:} 
Using Cauchy--Schwarz, we obtain
 \begin{equation}\label{eq:rp-error-proof-8}
     \mathbb E(r^p\partial_v,\mathrm{iii},k,\mathcal R) = \ve^2\tau_1^{-1+\delta} \iint_{\mathcal R_{\ge\Lambda}} r^{p-3}|\phi^k||\partial_v\psi^k|  \les  \ve^2\tau_1^{-1+\delta}\mathcal X^{\tau_f}_{p,k}.
 \end{equation}

 \textsc{Proof of \eqref{eq:rp-error}:} Using \eqref{eq:rp-error-proof-1}--\eqref{eq:rp-error-proof-8}, the interpolation inequality \eqref{eq:interpolation}, and the bootstrap assumptions \eqref{eq:boot-energy-1}--\eqref{eq:boot-energy-3}, we obtain an estimate which beats \eqref{eq:rp-error} and hence completes the proof of the proposition.
\end{proof}

\section{Completing the proof of the main theorem}\label{sec:completing}

We are now ready to complete the proof of the main theorem. As detailed arguments for all of the remaining routine steps have appeared in related (in fact much more complicated) contexts elsewhere, such as \cite{Christo09,Luk-local-existence,A16,DHRT22,AKU24}, we will not give detailed proofs in this section. We begin by recording the content of \cref{sec:a-priori,sec:error-est} with the following

\begin{prop}[Master energy hierarchy]\label{prop:master-hierarchy}   For any $A\ge 1$,  $\ve_0$ sufficiently small, $\tau_f\in \mathfrak B$, and $1\le\tau_1\le\tau_2\le\tau_f$, it holds that 
\begin{equation}\label{eq:master-hierarchy}
     \mathcal X^{\tau_f}_{p,k}(\tau_1,\tau_2) \les \underline{\mathcal E}{}^{\tau_f}_{p,k}(\tau_2)+\mathcal E^{\tau_f}_{p,k}(\tau_1)+  \ve^{5/2} \begin{cases}
         \tau_1^{-2+\delta+p}  & \text{if $k=n-2$ and $p\in\{0\}\cup[\delta,2-\delta]$}  \\
         \tau_1^{\max\{-1-\delta+p,-1\}}  &  \text{if $k=n-1$ and $p\in\{0\}\cup[\delta,1+\delta]$}\\
          \tau^{\max\{0,-1+3\delta+p\}}_2 &  \text{if $k=n$ and $p\in\{0\}\cup[\delta,1+\delta]$}\\
        \end{cases}.
\end{equation}
\end{prop}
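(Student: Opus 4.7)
The plan is to produce the master inequality by summing the four linear hierarchies of \cref{sec:a-priori} (\cref{prop:T-energy-est,prop:Morawetz,prop:horizon-hierarchy,prop:rp-hierarchy}) with suitably chosen coefficients, and then substituting the nonlinear error estimates of \cref{sec:error-est}. I would work on the butterfly region $\mathcal R\subset\mathcal D^{\tau_f}$ from \cref{fig:butterfly}, whose past and future corners lie at $\Gamma(\tau_1)$ and $\Gamma(\tau_2)$. The first step is to add the $T$-energy estimate \eqref{eq:T-energy-est} to a small multiple of the Morawetz estimate \eqref{eq:Morawetz}: this controls the $p=0$ outgoing and ingoing fluxes on all future boundary components of $\mathcal R$ together with the lowest-rung integrated energies $\mathcal M_{-1,k}^{\tau_f}(\tau)$ and $\underline{\mathcal M}{}_{-1,k}^{\tau_f}(\tau)$, with the expected degeneration at the photon sphere that is already built into the definition of $\underline{\mathcal M}{}_{-1,k}^{\tau_f}$, modulo past-data terms and the nonlinear errors $\mathbb E_{T,k}(\mathcal R)+\mathbb E_{Z,k}(\mathcal R)$.

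Next, I would add sufficiently small multiples of the $(r-M)^{-p}$-hierarchy \eqref{eq:r-M} and the $r^p$-hierarchy \eqref{eq:rp-estimate} to upgrade the $p=0$ fluxes to their weighted analogues $\mathcal E_{p,k}^{\tau_f}$, $\underline{\mathcal E}{}_{p,k}^{\tau_f}$, $\mathcal F_{p,k}^{\tau_f}$, $\underline{\mathcal F}{}_{p,k}^{\tau_f}$ and to produce the weighted bulks $\dot{\mathcal E}_{p-1,k}^{\tau_f}$ and $\dot{\underline{\mathcal E}}{}_{p-1,k}^{\tau_f}$ when $p\in[\delta,2-\delta]$. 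This step generates the remaining bulk errors $\mathbb E_{p,k}(\mathcal R)$ and $\underline{\mathbb E}{}_{p,k}(\mathcal R)$; the lower-order Hardy-type byproducts of the multiplier identities are absorbed via \cref{lem:hardy-near,lem:hardy-far}, after observing that the zeroth-order weights $c_{p_\star}=(p_\star-1)^2$ were designed to vanish at exactly the endpoint where those Hardy inequalities degenerate. Taking the supremum over $\tau\in[\tau_1,\tau_2]$ and over $u,v\ge 0$ and then sending $u_2',v_2'\to\infty$ inside $\mathcal D^{\tau_f}$ should yield
\begin{equation*}
\mathcal X_{p,k}^{\tau_f}(\tau_1,\tau_2)\les \mathcal E_{p,k}^{\tau_f}(\tau_1)+\underline{\mathcal E}{}_{p,k}^{\tau_f}(\tau_2)+\mathbb T_k(\mathcal R)+\mathbb E_{T,k}(\mathcal R)+\mathbb E_{Z,k}(\mathcal R)+\mathbb E_{p,k}(\mathcal R)+\underline{\mathbb E}{}_{p,k}(\mathcal R)
\end{equation*}
for every admissible pair $(k,p)$.

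To finish, I would substitute the nonlinear error bounds of \cref{prop:trapping-1,prop:T-error,prop:horizon-error,prop:rp-error}, each of which gives exactly $\ve^{5/2}$ times the decay rate appearing on the right-hand side of \eqref{eq:master-hierarchy}. The endpoint $p=0$ in the admissible ranges is either read off directly from the first step (where the weighted multipliers are not used) or recovered by interpolating between the $p=0$ and $p=\delta$ cases with the elementary H\"older-type inequality \eqref{eq:interpolation}.

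The main technical obstacle is the bookkeeping. One must verify that the angular-gradient bulks, zeroth-order bulks, and mixed time-derivative cross-terms produced by the Morawetz and weighted-multiplier identities all land in the appropriate pieces of $\mathcal X_{p,k}^{\tau_f}$ (often after one application of Hardy), and that the future ingoing flux $\underline{\mathcal E}{}_{p,k}^{\tau_f}(\tau_2)$---which cannot be absorbed into the left-hand side by the pure linear-combination argument---is honestly kept on the right. Modulo this accounting, the estimate reduces to a direct substitution of the error bounds from \cref{sec:error-est}.
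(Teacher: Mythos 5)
Your proposal is correct and matches the paper's argument: the paper's proof of the master hierarchy is precisely ``apply \cref{prop:T-energy-est,prop:Morawetz,prop:horizon-hierarchy,prop:rp-hierarchy} on a butterfly region $\mathcal R$ with $\mathrm I\cup\mathrm{II}\subset\{\tau=\tau_1\}$, estimate the resulting errors via \cref{prop:trapping-1,prop:T-error,prop:horizon-error,prop:rp-error}, then take the supremum over admissible $\mathcal R\subset\mathcal D^{\tau_f}$,'' which is exactly your plan with the bookkeeping spelled out. Two small remarks: for $p=0$ no interpolation is involved (the admissible set is $\{0\}\cup[\delta,2-\delta]$, so $p=0$ is an isolated point handled directly by the $T$-energy and Morawetz estimates, as you also note), and there is nothing structural preventing the $(r-M)^{-p}$ hierarchy from absorbing the future ingoing flux into the left-hand side---the appearance of $\underline{\mathcal E}{}^{\tau_f}_{p,k}(\tau_2)$ on the right of \eqref{eq:master-hierarchy}, which you are correctly reproducing from the statement, tracks the stated form of \eqref{eq:r-M} rather than a genuine obstruction, so the explanatory gloss ``cannot be absorbed'' overstates the case.
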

\begin{proof}
    Let $\mathcal R$ be a spacetime region as depicted in \cref{fig:butterfly} with $\mathrm{I}\cup\mathrm{II}\subset\{\tau=\tau_1\}$. By \cref{prop:T-error,prop:horizon-error,prop:rp-error}, $\mathbb E_{T,k}(\mathcal R)$, $\mathbb E_{Z,k}(\mathcal R)$, $\underline{\mathbb E}{}_{p,k}(\mathcal R)$, and $\mathbb E_{p,k}(\mathcal R)$ decay or grow in a manner consistent with the error term on the right-hand side of \eqref{eq:master-hierarchy}. Therefore, \eqref{eq:master-hierarchy} follows from the a priori energy estimates of \cref{prop:T-energy-est,prop:Morawetz,prop:horizon-hierarchy,prop:rp-hierarchy} and taking the supremum over $\mathcal R\subset\mathcal D^{\tau_f}$.
\end{proof}

We use this hierarchy and the pointwise estimates from \cref{sec:pw} to recover the bootstrap assumptions. 

\begin{proof}[Proof of \cref{prop:improving}] \textsc{Improving the pointwise estimates:} By \cref{prop:pointwise-u,prop:pointwise-v}, there exists a constant $C$ such that 
    \begin{align*}
   |\partial_u\psi^{n-6}|+r|\partial_u\phi^{n-6}|    &\le C\ve,\\
    r^{3/2-\delta}|\partial_v\psi^{n-6}|+ r^2|\partial_v\phi^{n-6}|    &\le C\ve
    \end{align*}
    in $\mathcal D^{\tau_f}\cap \{r\ge\Lambda\}$ and 
        \begin{align*}
     (r-M)^{-3/2+\delta}|\partial_u\phi^{n-6}|   &\le C\ve,\\
       |\partial_v\phi^{n-6}|  &\le C\ve
    \end{align*}
    in $\mathcal D^{\tau_f}\cap \{r\le\Lambda\}$. For $\ve_0$ sufficiently small depending on $A$ and $C$, $C\ve\le \tfrac 12\ve^{1/2}$, which improves the constant in  \eqref{eq:boot-pw-1}--\eqref{eq:boot-pw-5}.

   \textsc{Improving the energy estimates:} By a standard application of the pigeonhole principle over dyadic time intervals (see for instance \cite[Section 7.1]{AKU24}), the master hierarchy of \cref{prop:master-hierarchy} implies that there exists a constant $C$ such that
   \begin{equation*}
     \mathcal X^{\tau_f}_{p,k}(\tau_1,\tau_2) \le C (\ve_0^2+\ve^{5/2})  \begin{cases}
        \tau_1^{-2+\delta+p}  & \text{if $k=n-2$ and $p\in\{0\}\cup[\delta,2-\delta]$}  \\
          \tau_1^{\max\{-1-\delta+p,-1\}}  &  \text{if $k=n-1$ and $p\in\{0\}\cup[\delta,1+\delta]$}\\
           \tau^{\max\{0,-1+3\delta+p\}}_2 &  \text{if $k=n$ and $p\in\{0\}\cup[\delta,1+\delta]$}\\
        \end{cases},
\end{equation*}
where the dependence on $\ve_0$ comes from the initial data. For $A$ sufficiently large depending on $C$ and $\ve_0$ sufficiently small depending on $A$ and $C$, $C (\ve_0^2+\ve^{5/2})\le \frac 12\ve^2$, which improves the constant in \eqref{eq:boot-energy-1}--\eqref{eq:boot-energy-3}.

   \textsc{Improving integrated local energy decay at trapping:} By the bootstrap assumption \eqref{eq:boot-energy-1} and \cref{prop:removing-trapping,prop:trapping-1}, there exists a constant $C$ such that 
   \begin{equation*}
       \mathcal Y^{\tau_f}_{n-3}(\tau_1,\tau_2)\le C\ve^2\tau_1^{-2+\delta}. 
   \end{equation*}
   For $\ve_0$ sufficiently small depending on $A$ and $C$, $C\ve^2\le \frac 12\ve^{3/2}$, which improves the constant in \eqref{eq:boot-bulk-lower-order} and completes the proof of the proposition. 
\end{proof}

As we have now recovered the bootstrap assumptions, the proof of the main theorem now follows from a standard continuity argument.

\begin{proof}[Proof of \cref{thm:main}] Fix $A$ sufficiently large such that for every $\ve_0$ sufficiently small, \cref{lem:nonempty,prop:improving} hold. By \cref{prop:improving}, the extra hypothesis of part (ii) of \cref{lem:nonempty} holds for every $\tau_f\in\mathfrak B(\mathring\phi,\ve_0,A)$. Hence, $\mathfrak B(\mathring\phi,\ve_0,A)=[1,\infty)$ and the solution $\phi$ exists and is smooth on the domain of outer communication $\mathcal D$, and satisfies the desired estimates. Since our estimates prove that in any slab of bounded $\tau$, $\phi$ is bounded in $C^1$ up to the event horizon $\mathcal H^+$ (with respect to a regular coordinate system), a standard propagation of regularity argument implies that $\phi$ extends smoothly to $\mathcal H^+$. 
\end{proof}

\appendix

\section{Sketch of the proof of \texorpdfstring{\cref{thm:dynamical}}{Theorem 1.5}}\label{app:A}

The black hole exteriors $(\mathcal M,g)$ from \cite{AKU24} are covered by eschatologically normalized null coordinates $(u,v)$ which bring the dynamical metric $g$ into the standard double null form 
\[g= -\Omega^2dudv+r^2\mathring{\slashed g}.\]
 Associated to $(u,v)$ is also an extremal Reissner--Nordstr\"om solution $\bar g$ in Eddington--Finkelstein gauge, with area-radius function $\bar r(u,v)$, to which $g$ converges in a suitable sense.

Since $g$ is spherically symmetric, the rotation vector fields $\Gamma_1$, $\Gamma_2$, and $\Gamma_3$ are still Killing, but $T$ will no longer be. However, because of the additional assumption on $\mathcal N$, it will only be necessary to commute with the rotation vector fields. With this in mind, we define all of the energies as in \cref{def:energies}, but with $\bar r$ in place of $r$, as was done in \cite{AKU24}. (The $r^p$ energies can be defined with either $r$ or $\bar r$, but the $(r-M)^{-p}$ energies are sensitive to the difference.) We again make the bootstrap assumptions as in \cref{def:bootstrap}, but now omitting any commutations with $T$. 

The a priori energy estimates now follow the same strategy as in \cref{sec:a-priori}, with important differences. Recall the \emph{Hawking mass} $m \doteq \frac r2(1+4\Omega^{-2}\partial_u r\partial_vr)$  and $\kappa\doteq \partial_vr/(1-\frac{2m}{r})$. Note that $\kappa=1$ in exact extremal Reissner--Nordstr\"om. With these notations, we have $\Omega^2 = - 4\kappa \partial_ur$. If we directly try to carry out the proofs of \cref{sec:a-priori} with $r$ replaced by $\bar r$, the integration by parts arguments cause terms like $\partial_u^2r$ or $\partial_v\kappa$ to appear, which are not estimated in \cite{AKU24}. 

To avoid this, we modify the coordinate vector fields in the multiplier calculations according to 
\begin{equation*}
    \partial_u\mapsto\frac{\partial_u\bar r}{\partial_ur}\partial_u,\quad \partial_v\mapsto \frac 1\kappa\partial_v. 
\end{equation*}
After several tedious calculations, one can then prove versions of \cref{prop:T-energy-est,prop:Morawetz,prop:removing-trapping,prop:horizon-hierarchy,prop:rp-hierarchy}, with additional geometric error terms on the right-hand side, which are however compatible with the bootstrap assumptions. 

The main remaining difference lies in the trapping estimate \cref{prop:trapping-1}. In the present paper, we utilize the commutation with $T$ to handle non-null condition error terms, such as $|\partial_u\phi^{n-6}||\partial_u\phi^k|^2$, in the trapping region. In fact, this is the only use of $T$ commutation in the paper. When such terms satisfy the null condition, such as  $|\partial_u\phi^{n-6}||\partial_u\phi^k||\partial_v\phi^k|$, we can avoid this commutation with $T$ as follows: We first use Cauchy--Schwarz to estimate
\begin{align*}
    \iint_{\mathcal R\cap\mathcal B_{r_{-1},\Lambda}}|\partial_u\phi^{n-6}||\partial_u\phi^k||\partial_v\phi^k|\les \left(\iint_{\mathcal R\cap\mathcal B_{r_{-1},\Lambda}}|\partial_u\phi^k|^2\right)^{1/2} \left(\iint_{\mathcal R\cap\mathcal B_{r_{-1},\Lambda}}|\partial_u\phi^{n-6}|^2|\partial_v\phi^k|^2\right)^{1/2}.
\end{align*}
Importantly, we can derive an $L^2_vL^\infty_{u,\omega}$ estimate for $\partial_u\phi^{n-6}$ on dyadic timescales away from the horizon by following again the proof of \cref{prop:L1-Linfty}, so the second term can be estimated analogously to the estimate for $\mathbb E(\partial_u,\mathrm{ii},k,\mathcal R)$ in the proof of \cref{prop:T-error}. By applying the bootstrap assumptions on dyadic slabs and then summing, we recover \eqref{eq:TT-estimate}. Note the crucial use of the null condition here: $\partial_u\phi^{n-6}$ multiplies $\partial_v\phi^k$, so we only take the supremum of $\partial_u\phi^{n-6}$ in the $v$-direction, and hence can use the wave equation instead of a $T$ commutation after using the fundamental theorem of calculus.

\printbibliography[heading=bibintoc] 
\end{document}